\title{Doppelgangers: the Ur-Operation and Posets of Bounded Height}
\author{Thomas Browning, Max Hopkins, Zander Kelley}
\date{July 2018}
\newtheorem{theorem}{Theorem}[section]
\newtheorem{corollary}[theorem]{Corollary}
\newtheorem{proposition}[theorem]{Proposition}
\newtheorem{lemma}[theorem]{Lemma}
\newtheorem{definition}[theorem]{Definition}
\newtheorem{conjecture}[theorem]{Conjecture}
\newtheorem{example}[theorem]{Example}
\begin{document}
\maketitle
\begin{abstract}In the early 1970s, Richard Stanley and Kenneth Johnson introduced and laid the groundwork for studying the order polynomial of partially ordered sets (posets). Decades later, Hamaker, Patrias, Pechenik, and Williams introduced the term ``doppelgangers": equivalence classes of posets given by equality of the order polynomial. We provide necessary and sufficient conditions on doppelgangers through application of both old and novel tools, including new recurrences and the Ur-operation: a new generalized poset operation. In addition, we prove that the doppelgangers of posets P of bounded height $|P|-k$ may be classified up to systems of $k$ diophantine equations in $2^{O(k^2)}$ time, and similarly that the order polynomial of such posets may be computed in $O(|P|)$ time. An extended abstract of this paper appears in Issue 80B of Séminaire Lotharingien Combinatoire.
\end{abstract}
\section{Introduction}
\subsection{Background}
Richard Stanley introduced the order polynomial $F_P(m)$ of an unlabeled partially ordered set (poset) in 1970 as an analog to chromatic polynomials \cite{Order}. Soon after, Johnson introduced a recurrence relation on the order polynomial of unlabeled posets \cite{Johnson} which Stanley expanded upon through the introduction of induction on incomparable elements, a powerful tool for studying posets. Computing the order polynomial is difficult. For instance, Brightwell and Winkler proved that computing even the first coefficient of the order polynomial (counting linear extensions) is \#P-complete \cite{complexity}. Despite this, Faigle and Schrader proved that the order polynomial of special families, series-parallel posets and posets of bounded (constant) width, may be computed in polynomial time \cite{Faigle}. 

More recently, Boussicault, Feray, Lascoux, and Reiner examined posets from a geometric perspective by studying linear extension sums as valuations over polyhedral cones \cite{Feray2}. In their work, the authors re-introduce induction on incomparable elements, extending a simple recurrence on linear extensions to valuations. In 2014, McNamara and Ward \cite{McNamara} set out to classify the equivalence classes of the multivariate generating function $K_{(P,\omega)}$, a function introduced by Gessel in 1983 \cite{Ges}, and closely related to the labeled order polynomial $\Omega_{P,\omega}(m)$. In their work, McNamara and Ward prove a number of important poset invariants for $K_{P,\omega}(m)$, and offer several conjectures and unexplained equivalences--one of which we explain in section \ref{sp-results}. Later, Hamaker, Patrias, Pechenik and Williams coined the term doppelgangers for unlabeled posets with the same order polynomial, and demonstrated several examples related to the K-theory of miniscule varieties \cite{Hamaker}. Their paper focuses on infinite families of grid-like doppelgangers, raising the natural question of the existence and importance of similar families. We apply Johnson's initial recurrence to $F_P(m)$ as well as a new recurrence on both $\Omega_{P,\omega}(m)$ and $K_{(P,\omega)}$ similar to that used in \cite{Feray2} in order to further study doppelgangers. 
\subsection{Results}
\label{sec:results}
Our work begins with an exploration of the interaction between doppelgangers and the standard poset operations disjoint union and ordinal sum, the operations used to build series-parallel posets.
To this end, we introduce a number of recurrences that require the following definitions.
%To this end, we re-introduce Johnson's unlabeled recurrence and consider novel recurrences on $\Omega_{P,\omega}$ and $K_{P,\omega}$, which are closer in form to that used in \cite{Feray2}.
For incomparable elements $x,y$, let $P|x\leq y$ be the poset with added cover relation $x \lessdot y$ and all further relations required by transitivity, and $P|x=y$ be $P$ with $x$ and $y$ identified. In particular, if $v$ is the identification of $x$ and $y$ then $z\leq v$ in $P|x=y$ if and only if $z\leq x$ in $P$ or $z\leq y$ in $P$. Finally, given a labeled poset $(P,\omega)$, let $(P,\omega)|x<y$ be the poset $(P,\omega)$ with the added \textit{strict} relation $x<y$ and all other relations implied by transitivity. Note that $(P,\omega)|x<y$ may not correspond to a labeled poset. 

Recall that $F_P(m)$ counts the number of order-preserving maps $P\to[m]=\{1,\ldots,m\}$. $\Omega_{P,\omega}(m)$ counts the number of $(P,\omega)$-partitions into $[m]$--order preserving maps $P\to[m]$ that are consistent with the labeling $\omega$ (a bijection between $P$ and $[|P|]$). Finally, $K_{P,\omega}(x)$ is a sum over all $(P,\omega)$-partitions $f$ of the product of $x_j^{|f^{-1}(j)|}$ for each $j\geq1$.
More detail regarding these definitions can be found at the end of Section 2.
\begin{lemma}
\label{Irec}
The order polynomial and multivariate generating function admit the following recurrences:
\begin{align}
F_P &=F_{P|x\leq y}+F_{P|y\leq x}-F_{P|x=y}\\
\Omega_{P,\omega} &=\Omega_{P|x\leq y,\omega}+\Omega_{P|y\leq x,\omega}\\
K_{P,\omega} &=K_{P|x<y,\omega}+K_{P|y\leq x,\omega}
\end{align}
\end{lemma}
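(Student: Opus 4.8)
The plan is to prove all three recurrences by a single technique, \emph{induction on incomparable elements}: fix incomparable $x,y$ and partition the set of (generalized) order-preserving maps counted by the left-hand side according to how $f(x)$ and $f(y)$ compare. The crux is the following translation between constraints and modified posets. For order-preserving maps into $[m]$, adding the cover relation $x\lessdot y$ together with its transitive consequences yields exactly the maps $f:P\to[m]$ that are order-preserving and additionally satisfy $f(x)\leq f(y)$; that is, $F_{P|x\leq y}(m)$ counts order-preserving maps with $f(x)\leq f(y)$. I would verify this by checking that every new relation in $P|x\leq y$ is a pair $(a,b)$ with $a\leq x$ and $y\leq b$ in $P$, for which $f(a)\leq f(x)\leq f(y)\leq f(b)$ holds automatically; conversely any order-preserving map on $P|x\leq y$ restricts to one on $P$ with $f(x)\leq f(y)$.

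For recurrence (1), every order-preserving $f:P\to[m]$ lies in $\{f(x)\leq f(y)\}$ or $\{f(y)\leq f(x)\}$, and these two events overlap precisely on $\{f(x)=f(y)\}$. Maps with $f(x)=f(y)$ correspond bijectively to order-preserving maps of $P|x=y$, since identifying $x$ and $y$ to a single vertex $v$ and setting $f(v)=f(x)=f(y)$ respects exactly the relations $z\leq v$ (iff $z\leq x$ or $z\leq y$) and $v\leq z$. Inclusion--exclusion then gives $F_P=F_{P|x\leq y}+F_{P|y\leq x}-F_{P|x=y}$.

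For recurrences (2) and (3) the same partition is used, but now the two classes are \emph{disjoint}, so no correction term appears. In the labeled setting the labels $\omega(x),\omega(y)$ determine whether each added relation is strict or weak: if $\omega(x)<\omega(y)$ then the relation $x<y$ in $(P|x\leq y,\omega)$ is weak (counting $f(x)\leq f(y)$) while $y<x$ in $(P|y\leq x,\omega)$ is strict (counting $f(y)<f(x)$); if $\omega(x)>\omega(y)$ the roles reverse. In either case the boundary $f(x)=f(y)$ is assigned to exactly one side, so $\{f(x)\leq f(y)\}$ and its complement partition all $(P,\omega)$-partitions, giving (2). Recurrence (3) is the same partition with the strict/weak split made explicit in the notation: $(P,\omega)|x<y$ collects the partitions with $f(x)<f(y)$ and $(P,\omega)|y\leq x$ those with $f(y)\leq f(x)$; since $K_{P,\omega}$ records the full monomial $\prod_j x_j^{|f^{-1}(j)|}$ and each partition lies in exactly one class, the generating functions add.

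The main obstacle I anticipate is not the counting but the bookkeeping of well-definedness and the strict/weak boundary. I would first check that $P|x\leq y$, $P|y\leq x$, and $P|x=y$ are genuinely posets: incomparability of $x,y$ rules out the cycles that adding $x<y$ or identifying $x=y$ could otherwise create (a cycle would force $y\leq x$ or $x\leq y$ already in $P$). Second, for (2) and (3) one must confirm that passing to the modified structure preserves \emph{every} strict relation forced by $\omega$ along the newly added comparabilities, not merely weak order-preservation; this is where the note that $(P,\omega)|x<y$ ``may not correspond to a labeled poset'' becomes relevant, since the strict constraint is imposed directly rather than through a labeling, and one must read $K$ and $\Omega$ on such generalized objects as sums over maps satisfying the specified strict/weak inequalities. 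Once these correspondences are pinned down, each recurrence reduces to the tautology that a finite set is the disjoint (or inclusion--exclusion) union of the classes determined by comparing $f(x)$ and $f(y)$.
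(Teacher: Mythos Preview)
Your proposal is correct and matches the paper's approach: the paper proves (1.1) by observing that an order-preserving $f$ has $f(x)<f(y)$ (counted by the first term), $f(x)>f(y)$ (counted by the second), or $f(x)=f(y)$ (counted by all three), and states that (1.2)--(1.3) follow similarly; your inclusion--exclusion and disjoint-partition arguments are the same idea with the bookkeeping made explicit. One terminological quibble: what you call ``induction on incomparable elements'' is just a case split---in the paper that phrase refers to the separate technique of iterating the recurrence to reduce to chains.
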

The objects $P|x<y,\omega$ and $P|y\leq x,\omega$ in $(1.3)$ might not be posets, but these objects remain valid for the purpose of calculating $K_{P,\omega}$. While we mostly focus on these recurrences to examine ordinal sum, they provide further results on doppelgangers as well. For instance, just a single step of recurrence $(1.1)$ provides new infinite families.
\begin{example}
\label{ex:fam_1}
For each $n\geq 2$, the posets $P_1$ and $P_2$ below are doppelgangers.
\begin{center}
\begin{tabular}{c c}
\begin{tikzpicture}[scale=0.3]
\node[draw, circle, inner sep=0pt, minimum size=5pt] at (0,0) (a) {\tiny x};
\node[draw, circle, inner sep=0pt, minimum size=5pt] at (0,2) (b) {};
\node[draw, circle, inner sep=0pt, minimum size=5pt] at (2,0) (c) {\tiny y};
\node[draw, circle, inner sep=0pt, minimum size=5pt] at (2,2) (d) {};
\draw[dotted] (a) -- (b);
\draw[dotted] (c) -- (d);
\draw [decorate,decoration={brace,amplitude=5pt}]
(-0.5,-0.3) -- (-0.5,2.3) node [black,midway,xshift=-0.35cm]
{\footnotesize $n$};
\draw [decorate,decoration={brace,amplitude=5pt}]
(1.5,-0.3) -- (1.5,2.3) node [black,midway,xshift=-0.35cm]
{\footnotesize $n$};
\end{tikzpicture}
&\begin{tikzpicture}[scale=0.3]
\node[draw, circle, inner sep=0pt, minimum size=5pt] at (0,0) (a) {};
\node[draw, circle, inner sep=0pt, minimum size=5pt] at (1,2) (b) {};
\node[draw, circle, inner sep=0pt, minimum size=5pt] at (-1,2) (c) {};
\node[draw, circle, inner sep=0pt, minimum size=5pt] at (2,4) (f) {\tiny x};
\node[draw, circle, inner sep=0pt, minimum size=5pt] at (0,4) (g) {\tiny y};
\draw[dotted] (c) -- (a) -- (b);
\draw (g) -- (b) -- (f);
\draw [decorate,decoration={brace,amplitude=5pt},rotate=9]
(-0.5,-0.4) -- (-1.4,2.2) node [black,midway,xshift=-0.4cm,yshift=-.15cm]
{\footnotesize $n$};
\draw [decorate,decoration={brace,mirror,amplitude=5pt},rotate=-10]
(0.5,-0.4) -- (1.4,2.2) node [black,midway,xshift=0.7cm,yshift=-.15cm]
{\footnotesize $n-1$};
\end{tikzpicture}
\\$P_1$
&$P_2$
\end{tabular}
\end{center}
We have the isomorphisms $(P_1|x\leq y)\cong(P_2|x\leq y)$, $(P_1|y\leq x)\cong(P_2|y\leq x)$, and $(P_1|x=y)\cong(P_2|x=y)$.
Since isomorphic posets have the same order polynomial, Equation 1.1 from Lemma 1.1 shows that $F_{P_1}=F_{P_2}$. Setting $n = 2$, we recover the Nicomachus formula
\[
\sum\limits_{k=1}^m k^3 = F_{P_2} = F_{P_1} = \left (\sum\limits_{k=1}^m k \right )^2
\]
\end{example}

In their work, McNamara and Ward offer four pairs of posets with equivalent $K_{P,\omega}$ which their methods do not explain as a springboard for further investigation \cite{McNamara}. Our improper recurrence, Equation (1.3), easily shows the first of these pairs, given in Figure \ref{fig:MW}, have equivalent $K_{P, \omega}$.
We expect Lemma \ref{Irec} has far reaching consequences for $K_{P,\omega}$.

In order to study the interaction of doppelgangers and the ordinal sum, we combine these recurrences with Stanley's method of induction on incomparable elements \cite{Stanley1}, re-introduced recently in \cite{Feray2}. This method provides elegant proofs of old results such as Stanley's poset reciprocity theorem \cite{Order}, and provides a basis for the order polynomial which interacts well with ordinal sum (see Proposition \ref{basisformula}), leading to the following results. Recall that the ordinal sum of $P$ and $Q$, $P \oplus Q$, follows from stacking the Hasse diagrams of $P$ and $Q$. We say $P \sim Q$ when $F_P(x) = F_Q(x)$.
Using induction on incomarable elements, we show in Lemma 3.2 that the order polynomial of an ordinal sum $P\oplus Q$ is given by the Cauchy product of the order polynomials of $P$ and $Q$ in a basis of binomial coefficients.
As corollaries of Lemma 3.2, we have the following results.
\begin{corollary}
\label{I2-3}
For labeled posets $(P,\omega),(P^\prime,\omega^\prime),(Q,\psi),(Q^\prime,\psi^\prime)$, any two conditions imply the third:

1) $(P,\omega)\sim(P^\prime,\omega^\prime)$

2) $(Q,\psi)\sim(Q^\prime,\psi^\prime)$

3) $(P\oplus Q,\omega\oplus\psi)\sim(P^\prime\oplus Q^\prime,\omega^\prime\oplus\psi^\prime)$
\end{corollary}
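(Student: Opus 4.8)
The plan is to reduce the corollary to a cancellation statement in a polynomial ring, taking Lemma 3.2 as the sole structural input. By Lemma 3.2, if we expand each labeled order polynomial $\Omega_{P,\omega}(m)$ in the stated basis of binomial coefficients and record its coefficient sequence $\mathbf{a}^{(P,\omega)}$, then ordinal sum is sent to the Cauchy product, $\mathbf{a}^{(P\oplus Q,\,\omega\oplus\psi)} = \mathbf{a}^{(P,\omega)} * \mathbf{a}^{(Q,\psi)}$. Since the binomial coefficients are linearly independent as polynomials in $m$, this basis expansion is unique, so the relation $(P,\omega)\sim(P',\omega')$ (equality of the order polynomials) is equivalent to the equality of coefficient sequences $\mathbf{a}^{(P,\omega)} = \mathbf{a}^{(P',\omega')}$. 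The first step, then, is to encode each sequence as a generating polynomial $A_{P,\omega}(t) = \sum_i a^{(P,\omega)}_i\, t^i \in \mathbb{Q}[t]$, under which the Cauchy product becomes ordinary polynomial multiplication, $A_{P\oplus Q} = A_P \cdot A_Q$. This realizes $\oplus$ as multiplication in the integral domain $\mathbb{Q}[t]$ and places all four hypotheses and the conclusion inside that ring.

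With this dictionary in hand, the three implications follow formally. For $(1)\wedge(2)\Rightarrow(3)$ I would simply multiply: $A_P = A_{P'}$ and $A_Q = A_{Q'}$ give $A_{P\oplus Q} = A_P A_Q = A_{P'} A_{Q'} = A_{P'\oplus Q'}$, which is $(3)$. For $(1)\wedge(3)\Rightarrow(2)$, I would rewrite $(3)$ as $A_P A_Q = A_{P'} A_{Q'}$ and substitute $A_{P'} = A_P$ from $(1)$ to obtain $A_P A_Q = A_P A_{Q'}$, then cancel the common factor $A_P$ and transport the resulting equality $A_Q = A_{Q'}$ back through the basis to recover $(2)$. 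The implication $(2)\wedge(3)\Rightarrow(1)$ is identical after cancelling $A_Q$ instead; note that the Cauchy product of scalar sequences is commutative even though $\oplus$ is not an operation on posets, so no asymmetry complicates the cancellation.

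The only genuine content is justifying the cancellation, so I expect the key step to be verifying that the factor being cancelled is nonzero. This reduces to the observation that a labeled order polynomial is never the zero polynomial: $\Omega_{P,\omega}(m)$ counts $(P,\omega)$-partitions into $[m]$, and for $m \ge |P|$ at least one such partition exists, so $\Omega_{P,\omega}$ is positive for all large $m$ and hence its coefficient sequence, and therefore $A_{P,\omega}(t)$, is nonzero. Since $\mathbb{Q}[t]$ is an integral domain, a nonzero polynomial may be cancelled, which closes both cancellation arguments. I expect this nonvanishing check to be the main (indeed the only nontrivial) obstacle; everything else is the formal bookkeeping of transporting equalities across the multiplicative correspondence supplied by Lemma 3.2.
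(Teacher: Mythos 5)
Your proposal is correct and matches the paper's intended argument: the paper derives this corollary directly from Lemma 3.2 by applying the linear map $\binom{m+k-1}{k}\mapsto t^k$, which turns ordinal sum into multiplication in $\mathbb{Q}[t]$, and the ``two imply the third'' statement is exactly cancellation of a nonzero factor in that integral domain. The paper leaves these details implicit (calling the corollary an ``immediate'' consequence), and you have filled them in correctly, including the one genuinely necessary check that $\Omega_{P,\omega}$ is never the zero polynomial.
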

\begin{corollary}
\label{Isym}
For all labeled posets $(P,\omega),(Q,\psi)$,
\[(P\oplus Q,\omega\oplus\psi)\sim(Q\oplus P,\psi\oplus\omega).\]
\end{corollary}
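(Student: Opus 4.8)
The plan is to deduce the symmetry immediately from the Cauchy-product description of the order polynomial of an ordinal sum supplied by Lemma 3.2, using nothing more than the commutativity of that product. Recall that Lemma 3.2 expresses the order polynomial of $P\oplus Q$ in the binomial basis $\left\{\binom{m}{k}\right\}$ so that its coefficient sequence is the convolution of the coefficient sequences attached to $P$ and to $Q$; writing $e_k(\cdot)$ for these coefficients, this reads $e_k(P\oplus Q)=\sum_{i+j=k} e_i(P)\,e_j(Q)$, and equivalently says that the generating series packaging $F_{P\oplus Q}$ is the product of the series for $F_P$ and $F_Q$.

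First I would record that, because ordinary multiplication is commutative, this convolution is invariant under exchanging the two factors: $\sum_{i+j=k} e_i(P)\,e_j(Q)=\sum_{i+j=k} e_j(Q)\,e_i(P)=e_k(Q\oplus P)$ for every $k$. Since two polynomials with identical binomial-basis coefficients are equal, this gives $F_{P\oplus Q}=F_{Q\oplus P}$, and the same computation runs verbatim for $\Omega_{P,\omega}$ (and for $K_{P,\omega}$) once Lemma 3.2 is read in that setting, yielding $(P\oplus Q,\omega\oplus\psi)\sim(Q\oplus P,\psi\oplus\omega)$ as claimed.

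I expect the only point requiring care—rather than a genuine obstacle—to be the bookkeeping of the labels at the junction between the two summands. In both $\omega\oplus\psi$ and $\psi\oplus\omega$ the bottom summand receives the initial block of labels and the top summand the final block, so the new covering relations created between the two pieces carry the same strict-versus-weak status in each ordering; this is exactly the hypothesis under which Lemma 3.2 applies symmetrically to $P\oplus Q$ and $Q\oplus P$, so that the convolution formula holds in both cases with matching coefficient sequences. I note that this symmetry does not follow formally from Corollary \ref{I2-3}, which only transports $\sim$ across a fixed ordering of the summands; the swap genuinely uses commutativity of the Cauchy product, which is the content that Lemma 3.2 contributes here, leaving the corollary itself as essentially a one-line consequence.
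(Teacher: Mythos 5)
Your proposal is correct and matches the paper's approach: the paper states this corollary without an explicit proof precisely because it is immediate from Lemma \ref{Llemma}, whose identity $L(\Omega_{P\oplus Q,\omega\oplus\psi})=L(\Omega_{P,\omega})L(\Omega_{Q,\psi})$ combined with commutativity of the product (equivalently, of the Cauchy convolution in the chain basis $\binom{m+k-1}{k}$) and injectivity of the chosen $L$ gives the result. Your remark about the junction labels is also consistent with the paper's setup, since $\omega\oplus\psi$ and $\psi\oplus\omega$ both make every element of the bottom summand weakly less than every element of the top summand.
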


While Lemma \ref{Irec} and Corollaries \ref{I2-3} and \ref{Isym} explain a large number of small and series-parallel doppelgangers, there are examples of size $\geq 6$ (see Example \ref{Ur-ex}) they cannot explain. To this end, we introduce a new poset operation to generalize Corollaries \ref{I2-3} and \ref{Isym}. 
\begin{definition}
\label{Ur-def}
For a poset $\mathscr{P}=\{x_1,\cdots,x_n\}$ and a sequence of posets $\{P_1,\cdots,P_n\}$, let $\mathscr{P}[x_k\to P_k]_{k=1}^n$ be the poset on $\bigcup_kP_k$ with the following operation:
\[\text{For }p\in P_j,q\in P_k,\ p\leq q\text{ when }\begin{cases}p\leq q&j=k\\x_j\leq x_k&j\neq k\end{cases}.\]
We call this the Ur-operation on $\mathscr{P}$ by $\{P_1,\cdots,P_n\}$. If any $P_k$ is not specified, then that $P_k$ is assumed to be the poset on one element.
\end{definition}
\begin{example}
The Ur-operation generalizes disjoint union, ordinal sum, and ordinal product.
\begin{center}
\begin{tabular}{c c c c c c}
&
&
& $V + V$
& $V \oplus V$
& $V \otimes V$
\\
\begin{tikzpicture}[scale=0.27]
\node[draw, circle, inner sep=0pt, minimum size=5pt] at (-1,0) (a) {};
\node[draw, circle, inner sep=0pt, minimum size=5pt] at (1,0) (b) {};
\end{tikzpicture}
&\begin{tikzpicture}[scale=0.27]
\node[draw, circle, inner sep=0pt, minimum size=5pt] at (0,-1) (a) {};
\node[draw, circle, inner sep=0pt, minimum size=5pt] at (0,1) (b) {};
\draw (a) -- (b);
\end{tikzpicture}
&\begin{tikzpicture}[scale=0.27]
\node[draw, circle, inner sep=0pt, minimum size=5pt] at (1,0) (a) {};
\node[draw, circle, inner sep=0pt, minimum size=5pt] at (2,2) (b) {};
\node[draw, circle, inner sep=0pt, minimum size=5pt] at (0,2) (c) {};
\draw (b) -- (a) -- (c);
\end{tikzpicture}
&\begin{tikzpicture}[scale=0.27]
\node[draw, circle, inner sep=0pt, minimum size=5pt] at (1,0) (a) {};
\node[draw, circle, inner sep=0pt, minimum size=5pt] at (2,2) (b) {};
\node[draw, circle, inner sep=0pt, minimum size=5pt] at (0,2) (c) {};
\node[draw, circle, inner sep=0pt, minimum size=5pt] at (5,0) (d) {};
\node[draw, circle, inner sep=0pt, minimum size=5pt] at (6,2) (e) {};
\node[draw, circle, inner sep=0pt, minimum size=5pt] at (4,2) (f) {};
\draw (b) -- (a) -- (c);
\draw (e) -- (d) -- (f);
\end{tikzpicture}
&\begin{tikzpicture}[scale=0.27]
\node[draw, circle, inner sep=0pt, minimum size=5pt] at (1,0) (a) {};
\node[draw, circle, inner sep=0pt, minimum size=5pt] at (2,2) (b) {};
\node[draw, circle, inner sep=0pt, minimum size=5pt] at (0,2) (c) {};
\node[draw, circle, inner sep=0pt, minimum size=5pt] at (1,4) (d) {};
\node[draw, circle, inner sep=0pt, minimum size=5pt] at (2,6) (e) {};
\node[draw, circle, inner sep=0pt, minimum size=5pt] at (0,6) (f) {};
\draw (b) -- (a) -- (c);
\draw (e) -- (d) -- (f);
\draw (b) -- (d) -- (c);
\end{tikzpicture}
&\begin{tikzpicture}[scale=0.27]
\node[draw, circle, inner sep=0pt, minimum size=5pt] at (1,0) (a) {};
\node[draw, circle, inner sep=0pt, minimum size=5pt] at (2,2) (b) {};
\node[draw, circle, inner sep=0pt, minimum size=5pt] at (0,2) (c) {};
\node[draw, circle, inner sep=0pt, minimum size=5pt] at (3,4) (d) {};
\node[draw, circle, inner sep=0pt, minimum size=5pt] at (4,6) (e) {};
\node[draw, circle, inner sep=0pt, minimum size=5pt] at (2,6) (f) {};
\node[draw, circle, inner sep=0pt, minimum size=5pt] at (-1,4) (g) {};
\node[draw, circle, inner sep=0pt, minimum size=5pt] at (0,6) (h) {};
\node[draw, circle, inner sep=0pt, minimum size=5pt] at (-2,6) (i) {};
\draw (b) -- (a) -- (c);
\draw (e) -- (d) -- (f);
\draw (i) -- (g) -- (h);
\draw (b) -- (d);
\draw (c) -- (g);
\end{tikzpicture}
\\$A_2$
& $C_2$
&$V$
&$A_2[x _i\to V]_{i=1}^2$
&$C_2[x _i\to V]_{i=1}^2$
&$V[x_i \to V]_{i=1}^3$
\\
\end{tabular}
\end{center}
\end{example}
Further, using the operation we prove a generalization of Corollary \ref{I2-3}:
\begin{theorem}
For a poset $\mathscr P=\{x_1,\cdots,x_n\}$ and two sequences of posets $\{P_1,\ldots,P_n\}$ and $\{Q_1,\ldots,Q_n\}$ such that $P_i\sim Q_i$, we have that $\mathscr P[x_k\to P_k]_{k=1}^n\sim\mathscr P[x_k\to Q_k]_{k=1}^n$.
\label{Ur-thm}
\end{theorem}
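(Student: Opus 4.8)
The plan is to pass to the \emph{binomial basis} of the order polynomial and to show that the order polynomial of an Ur-operation is a fixed, $\mathscr P$-determined combination of simple invariants of the pieces $P_k$. Write $s_j(P)$ for the number of \emph{surjective} order-preserving maps $P\to[j]$. Every order-preserving map $P\to[m]$ surjects onto its image, a $j$-element subset of $[m]$ for some $j$, and conversely any of the $\binom{m}{j}$ choices of image together with any of the $s_j(P)$ surjections onto it recovers such a map; hence
\begin{equation}
F_P(m)=\sum_{j}s_j(P)\binom{m}{j}.
\label{eq:urbinbasis}
\end{equation}
As the polynomials $\binom{m}{j}$ are linearly independent, \eqref{eq:urbinbasis} is the unique such expansion, so $P\sim Q$ if and only if $s_j(P)=s_j(Q)$ for all $j$; this is the binomial-basis reformulation of doppelganger equality underlying Proposition \ref{basisformula}. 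It therefore suffices to show that $F_{\mathscr P[x_k\to P_k]_{k=1}^n}$ depends on the sequence $\{P_k\}$ only through the numbers $\{s_j(P_k)\}$.

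First I would decompose an arbitrary order-preserving map $f\colon\mathscr P[x_k\to P_k]_{k=1}^n\to[m]$ block by block: for each $k$ set $S_k=f(P_k)\subseteq[m]$, so that $f|_{P_k}$ is a surjective order-preserving map from $P_k$ onto the chain $S_k\cong[\,|S_k|\,]$. Next I would isolate exactly which data the axioms constrain. Inside a block the sole requirement is that $f|_{P_k}$ be order-preserving. Between blocks $j\ne k$ with $x_j\le x_k$ in $\mathscr P$, the Ur-operation declares $p\le q$ for \emph{every} $p\in P_j$ and $q\in P_k$, so this family of relations is equivalent to the single inequality $\max S_j\le\min S_k$; incomparable blocks impose nothing. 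The decisive point is that the between-block conditions see the images only through their extreme values and thus are entirely decoupled from the choice of internal surjection onto each $S_k$.

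Given this decoupling I would factor the count by first choosing the images and then filling in the surjections. Grouping by the sizes $j_k=|S_k|$, let $C_{\mathscr P}(j_1,\dots,j_n;m)$ be the number of tuples $(S_1,\dots,S_n)$ of subsets of $[m]$ with $|S_k|=j_k$ satisfying $\max S_j\le\min S_k$ whenever $x_j\le x_k$. For any fixed admissible tuple of images the internal maps may be chosen independently in $\prod_k s_{j_k}(P_k)$ ways, and every $f$ arises exactly once, so
\begin{equation}
F_{\mathscr P[x_k\to P_k]_{k=1}^n}(m)=\sum_{j_1,\dots,j_n}\Bigl(\prod_{k=1}^n s_{j_k}(P_k)\Bigr)\,C_{\mathscr P}(j_1,\dots,j_n;m).
\label{eq:urcount}
\end{equation}
The coefficient $C_{\mathscr P}$ counts placements of images alone and so is blind to the posets $P_k$. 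Thus \eqref{eq:urcount} realizes $F_{\mathscr P[x_k\to P_k]}$ as a combination of the surjection counts with $\mathscr P$-dependent coefficients. The hypothesis $P_i\sim Q_i$ gives $s_j(P_i)=s_j(Q_i)$ for all $i,j$ by the uniqueness in \eqref{eq:urbinbasis}, so replacing each $P_k$ by $Q_k$ in \eqref{eq:urcount} changes no term and yields $\mathscr P[x_k\to P_k]_{k=1}^n\sim\mathscr P[x_k\to Q_k]_{k=1}^n$.

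I expect the main obstacle to be pinning down the decoupling rigorously: one must check that the Ur-operation really makes each comparable pair of blocks a \emph{complete} bipartite order relation, so that the between-block axioms collapse to the single inequalities $\max S_j\le\min S_k$ and constrain the internal surjections not at all. Once that is secured, both the factorization \eqref{eq:urcount} and the independence of $C_{\mathscr P}$ from the $P_k$ follow directly, and the conclusion is immediate; the remaining care is purely bookkeeping, e.g.\ confirming that equality in $\max S_j\le\min S_k$ (shared boundary values between comparable blocks) is correctly allowed.
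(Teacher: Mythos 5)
Your proof is correct, and at its core it follows the same strategy as the paper's: decompose an order-preserving map of the Ur-poset block by block, observe that the between-block constraints see only the images of the blocks, and factor the count as a $\mathscr P$-determined coefficient times a product of surjection counts, which are doppelganger invariants by uniqueness of the expansion in a surjection basis. The genuine difference is the choice of basis and of image data. The paper reduces via Proposition~\ref{basisformula} to the counts $S_P(n)$ of \emph{strict} surjective maps onto $[n]$ and sums over ``nice collections of intervals'' $\{[a_k,b_k]\}$ covering $[n]$, whereas you work with ordinary order-preserving maps into $[m]$, the binomial basis $\binom{m}{j}$, and tuples of \emph{arbitrary} subsets $S_k\subseteq[m]$. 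Your choice is the safer one: under a strict surjective map of the Ur-poset the image of a block need not be an interval. Already for $\mathscr P=A_2$, $P_1=C_2$, $P_2=C_1$, and $n=3$ one has $S_{C_2+C_1}(3)=3$, but only two nice interval collections contribute; the missing map sends $C_2$ to $\{1,3\}$ and the isolated point to $2$. So the paper's sum should really run over subsets (or over interval lengths with the appropriate placement coefficient) --- which is exactly what your coefficient $C_{\mathscr P}(j_1,\dots,j_n;m)$ records. The one point you flag as a possible obstacle, that comparable blocks form a complete bipartite order relation so the between-block axioms collapse to $\max S_j\leq\min S_k$, is immediate from Definition~\ref{Ur-def}, since for $j\neq k$ the relation $p\leq q$ is declared for \emph{all} $p\in P_j$ and $q\in P_k$ whenever $x_j\leq x_k$; nothing further is needed there.
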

Theorem \ref{Ur-thm} shows that elements of the same poset may be exchanged for doppelgangers while preserving equivalence. This raises the natural question of when distinct elements may be exchanged with the same result.
\begin{definition}
We say $x \in P,\ y \in Q$ are Ur-equivalent when $P[x \to R] \sim Q[y \to S]$ for all posets $R\sim S$.
\end{definition}
In Corollary \ref{urequivalenceresult} and Conjecture \ref{urequivalenceconjecture}, we offer a necessary and sufficient condition for Ur-equivalence, and conjecture a strengthening of the result.

Finally, we move to the classification of infinite families of doppelgangers. Faigle and Schrader proved that for posets with bounded width $k$, the order polynomial may be computed in $O(|P|^{2k+1})$ time. However, any algorithm to classify infinite families of doppelgangers must be constant with respect to $|P|$. We provide such an algorithm for posets of height $|P|-k$, a subfamily of Faigle and Schrader's posets of bounded width.
\begin{theorem}
\label{class-time}
For constant k, the doppelgangers among posets of height $|P|-k=n-k$ are completely determined by sets of $k$ diophantine equations computable in $2^{O(k^2)}$ time. In addition, $F_P(x)$ is computable in $O(n)$ time, and for $k=O(\frac{log(n)}{log(log(n))})$, the time is polynomial in $n$. 
\end{theorem}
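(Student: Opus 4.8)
The plan is to convert each height-$(n-k)$ poset into a bounded combinatorial \emph{template} together with a short list of integer length parameters, and then to read off both the order polynomial and the doppelganger conditions from this encoding.

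First I would establish the structural decomposition. Partition $P$ into its rank levels $A_1,\dots,A_{n-k}$, where $A_i$ is the set of elements whose longest chain below has length $i$. Since $\sum_i|A_i|=n$ is distributed across $n-k$ levels, we get $\sum_i(|A_i|-1)=k$, so at most $k$ levels are \emph{wide} ($|A_i|\ge 2$) and the total number of elements lying in wide levels is at most $2k$. Call these the special elements; the remaining levels are singletons, and maximal runs of consecutive singleton levels form chains whose internal order is forced. The template records the induced poset on the $\le 2k$ special elements together with the comparability pattern between each chain-run and each special element; since a poset on $\le 2k$ points is specified by its $\binom{2k}{2}=O(k^2)$ comparabilities, and there are $O(k)$ runs attaching to $O(k)$ specials, the number of templates is $2^{O(k^2)}$. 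Within a fixed template the poset is then determined by the $r\le k+1$ run-lengths $\ell_0,\dots,\ell_{r-1}$, which are the only parameters depending on $n$ and which satisfy $\sum_i\ell_i=n-O(k)$.

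Next I would compute $F_P$ from the encoding. The tool is the binomial-basis Cauchy product for ordinal sums (our Lemma 3.2): in the basis $\binom{m-1+j}{j}$ a chain of length $\ell$ is a single basis vector and ordinal sum becomes convolution. Processing $P$ by levels as a transfer computation, every singleton level contributes only a trivial shift, so a run of length $\ell$ collapses to one explicit binomial factor in $\ell$, while each of the $\le k$ wide levels contributes a bounded block depending only on the template. This writes $F_P(m)$ as a combination of $O(k)$ template blocks and $O(k)$ chain factors; producing each coefficient in the binomial basis in amortised $O(1)$ time then assembles the degree-$n$ polynomial $F_P$ in $O(n)$ time. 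When $k=O(\log n/\log\log n)$ the $2^{O(k^2)}$ template enumeration is itself polynomial in $n$, giving the stated regime. Classification is then comparison: two such posets are doppelgangers exactly when their binomial-basis expansions agree coefficient-by-coefficient. Fixing an ordered pair of templates (there are $2^{O(k^2)}$ of them), equality is a polynomial identity in the run-lengths; because the chain factors feed their lengths in only through the $k$ units of excess $\sum_i(|A_i|-1)=k$, all but $k$ of the coefficient identities hold automatically and the rest collapse to a system of at most $k$ Diophantine equations in the $\ell_i$ and $\ell_i'$. Enumerating template pairs and deriving each system takes $2^{O(k^2)}$ time, and the solution sets are precisely the doppelganger families.

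The main obstacle is the structural/contraction step. Although the level count immediately bounds the special elements by $2k$, a naive ordinal-sum decomposition does \emph{not} bound the indecomposable pieces: a long internal chain can hide inside a single ordinal-indecomposable block (for instance, a chain with one side-element attached only near the top remains indecomposable at arbitrary length). One must therefore argue at the level of the rank-antichains that every singleton-level element relates to the special elements in a manner determined solely by its position, so that each run genuinely contracts to a single length parameter with the claimed uniform comparability pattern. The second delicate point is proving that polynomial equality yields \emph{exactly} $k$ independent Diophantine constraints rather than merely $O(k)$; this is where the identity $\sum_i(|A_i|-1)=k$ must be leveraged to count degrees of freedom precisely and match the number of equations to the excess $k=|P|-\mathrm{height}(P)$.
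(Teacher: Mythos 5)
Your high-level strategy (a bounded combinatorial template plus a few unbounded integer length parameters, then evaluate and compare) matches the paper's, but two of your central claims have genuine gaps. First, the structural step: within a fixed template the poset is \emph{not} determined by the $r\le k+1$ run-lengths. A special element may be comparable to a prefix of a run, incomparable to a middle segment, and comparable to a suffix (this is exactly Lemma \ref{chainstructure} applied to that run), and the positions of the two cut points are unbounded integers that belong neither to a finite template nor to your list of run-lengths. Concretely, take a chain $x_1<\cdots<x_{n-1}$ plus one element $y$ with $y>x_1,\dots,x_a$ and $y<x_b,\dots,x_{n-1}$: here $k=1$ and there are only two runs, but $b$ is a free parameter invisible to your encoding. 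The same phenomenon breaks your transfer computation: the Cauchy-product formula (Lemma \ref{Llemma}) applies at a level boundary only when the poset genuinely splits as an ordinal sum there, and a special element straddling a run prevents any such split, so the singleton levels inside the straddled region do not ``contribute only a trivial shift.'' The paper sidesteps this by fixing one maximal chain, recording for each of the $k$ off-chain elements its two attachment integers, and taking as parameters the $2k+1$ gaps $d_i$ between the sorted attachment values (Lemma \ref{lem:F_P}); the template is then the poset on the $k$ off-chain elements together with the relative order of the $2k$ attachment values, which is what produces the $2^{O(k^2)}$ enumeration.

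Second, the reduction to exactly $k$ Diophantine equations is asserted rather than proved. Coefficient-by-coefficient comparison in the binomial basis a priori yields on the order of $n$ conditions, and the claim that ``all but $k$ of the coefficient identities hold automatically'' because $\sum_i(|A_i|-1)=k$ has no supporting mechanism. The paper's mechanism is Corollary \ref{rootresult} together with Proposition \ref{prop:classification}: two posets of the same size $n$ and height $n-k$ have order polynomials of the same degree and leading coefficient behavior that share the forced roots $0,-1,\dots,-(n-k)+1$ and the value $1$ at $x=1$, hence agree identically iff they agree at $k$ further points, which one takes to be $x=2,\dots,k+1$; each $F_P(i)$ is a polynomial in the gap parameters by Lemma \ref{lem:F_P}, giving precisely $k$ equations per pair of families. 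Without that root-counting argument (or an equivalent), your count of $k$ equations is unsupported. The $O(n)$ claim survives in spirit, but in the paper the $O(n)$ bottleneck is locating the maximal chain and extracting the attachment data (Lemma \ref{lem:F_P2}); afterwards only $k$ evaluations and an interpolation against the known linear factors are required, rather than an $n$-term convolution.
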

Theorem \ref{class-time} takes advantage of several invariants on doppelgangers we will introduce in Section \ref{sp-results}, as well as the rigid structure of posets of bounded height. The improvement this structure brings from $O(n^{2k+1})$ to $O(n)$ allows us to extend our family of bounded height past the constant restriction imposed by Faigle and Schrader on posets of bounded width. As an example, we provide the diophantine equations for $k=1,2$ in Table \ref{tab:dio}, along with general solutions where possible.

\section{Doppelgangers and the Order Polynomial \label{notation}}
For a poset $P$, let $F_P(n)$ denote the number of order-preserving maps $f$ from $P$ to $\{1,2,\ldots,n\}$ -- that is, maps which satisfy $f(x) \leq f(y)$ whenever $x \leq y$ in $P$. 
Thus the numbers $F_P(n)$ provide a measure of how far the poset $P$ is from a total order. 
If two posets $P$ and $Q$ satisfy the equivalence $F_P(n) = F_Q(n)$ for all $n$, we will call them \textit{doppelgangers}, and we denote this fact by $P \sim Q$.
In this paper we establish certain structural properties of a pair of of posets $(P,Q)$ which are either necessary or sufficient conditions for $P \sim Q$. Stanley offered many seminal necessary conditions in his early work and later as exercises in \textit{Enumerative Combinatorics} \cite{Stanley}. We provide some simple but important examples from these to aid intuition.
\begin{proposition}
If $P$ and $Q$ are doppelgangers, then they have the same number of elements.
\end{proposition}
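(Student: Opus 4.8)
The plan is to show that the order polynomial $F_P(n)$ grows like $n^{|P|}$ as $n \to \infty$, so that the common function $F_P = F_Q$ pins down the common cardinality. Concretely, I would sandwich $F_P(n)$ between two explicit functions of $n$, each of growth order exactly $m := |P|$.

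For the upper bound, every order-preserving map $P \to [n]$ is in particular a function $P \to [n]$, and there are exactly $n^m$ of these; hence $F_P(n) \le n^m$. For the lower bound I would fix a linear extension $L$ of $P$, that is, a total order $x_1 <_L \cdots <_L x_m$ on the $m$ elements of $P$ refining every relation of $P$ (one exists for any finite poset). Since $x \le_P y$ implies $x \le_L y$, every $L$-order-preserving map is automatically $P$-order-preserving, so the $L$-order-preserving maps form a subset of the $P$-order-preserving ones and $F_L(n) \le F_P(n)$. Because $L$ is a chain, an $L$-order-preserving map is exactly a weakly increasing sequence $f(x_1) \le \cdots \le f(x_m)$ with values in $[n]$, and such sequences are counted by $\binom{n+m-1}{m}$. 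Combining the two estimates, for every $n \ge 1$,
\[
\binom{n+m-1}{m} \le F_P(n) \le n^m .
\]

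Both the left- and right-hand sides are polynomials in $n$ of degree exactly $m$ (the left side has leading term $n^m/m!$), so $F_P(n) = \Theta(n^{|P|})$; equivalently $\lim_{n\to\infty} \log F_P(n)/\log n = |P|$. Now if $P \sim Q$ then $F_P(n) = F_Q(n)$ for all $n$, and equal sequences have the same growth exponent, forcing $|P| = |Q|$.

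The only delicate point is the lower bound, namely the observation that passing from $P$ to a linear extension $L$ can only remove order-preserving maps, never create them; the upper bound and the final comparison are immediate. (Alternatively, one may invoke the standard fact that $F_P$ is a polynomial of degree $|P|$ and simply compare the degrees of the equal polynomials $F_P$ and $F_Q$, but the sandwich above keeps the argument self-contained.)
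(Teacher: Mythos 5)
Your argument is correct, but it takes a genuinely different route from the paper. The paper's proof writes $F_P(n) = \sum_{k=1}^{|P|} a_k \binom{n}{k}$, where $a_k$ counts surjective order-preserving maps $P \to [k]$, observes that this exhibits $F_P$ as a polynomial of degree exactly $|P|$ (since $a_{|P|}$, the number of linear extensions, is positive), and concludes by comparing degrees of the equal polynomials $F_P$ and $F_Q$ --- essentially the alternative you mention in your closing parenthetical. You instead avoid polynomiality altogether and recover $|P|$ as a growth exponent via the sandwich $\binom{n+m-1}{m} \le F_P(n) \le n^m$; the key step, that a linear extension $L$ of $P$ has more relations to preserve and hence $F_L(n) \le F_P(n)$, is exactly right. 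Your approach is more elementary and self-contained, extracting only the one invariant needed here; the paper's approach does more work up front but yields the polynomiality of $F_P$ and the surjective-map expansion, which are reused throughout the rest of the paper (e.g.\ in Proposition~\ref{basisformula} and the height and linear-extension invariants of Lemma~\ref{invariants}).
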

\begin{proof}
Let $a_k$ be the number of \textit{surjective} order-preserving maps $f$ from $P$ to $\{1,2,\ldots,k\}$.
Then we have
$$ F_P(n) = \sum_{k=1}^{|P|} a_k \binom{n}{k}. $$

In particular, $F_P$ is a polynomial of degree $|P|$ (indeed, $F_P$ is called the \textit{order polynomial} of $P$ \cite{Stanley}).
\end{proof}

We recall several operations on posets and show that they behave well in relation to order polynomials. 
Let $P$ and $Q$ be posets, and let $\bf{1}$ denote the poset with a single element.

\begin{itemize}
\item The \textit{dual} of a poset $P$, denoted $P^*$, is constructed by reversing the direction of all relations in $P$.
\item The \textit{disjoint union} of $P$ and $Q$, denoted $P + Q$, is constructed by taking the union of the elements of $P$ and $Q$ and inheriting the relations from $P$ and $Q$ 
(thus the elements from $P$ remain incomparable with the elements from $Q$). 
For example, $\bf{1} + \bf{1} + \bf{1}$ is the anti-chain of size $3$, in which no two distinct elements are comparable. 
\item The \textit{ordinal sum} of $P$ and $Q$, denoted $P \oplus Q$, is constructed by first taking $P + Q$,
and then imposing the relation $x \leq y$ for every $x \in P$ and $y \in Q$. 
For example, $\bf{1} \oplus \bf{1} \oplus \bf{1}$ is the chain of size $3$, a total order.
\item The \textit{ordinal product} of $P$ and $Q$, denoted $P \otimes Q$, is constructed by taking the Cartesian product $P \times Q$ and imposing relations $(r, s) \leq (r', s')$ if $r<r'$ in $P$ or $r=r'$ in $P$ and $s \leq s'$ in $Q$. For example, $(\bf{1} + \bf{1} + \bf{1}) \otimes (\bf{1} \oplus \bf{1} \oplus \bf{1})$ is $(\bf{1} \oplus \bf{1} \oplus \bf{1}) + (\bf{1} \oplus \bf{1} \oplus \bf{1}) + (\bf{1} \oplus \bf{1} \oplus \bf{1})$
\end{itemize}

\begin{proposition}
$P \sim P^*$.
\end{proposition}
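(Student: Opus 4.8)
The plan is to exhibit an explicit bijection between the set of order-preserving maps $P \to \{1,\dots,n\}$ and the set of order-preserving maps $P^* \to \{1,\dots,n\}$ for each fixed $n$; since $F_P(n)$ and $F_{P^*}(n)$ count these two sets respectively, a bijection for every $n$ immediately yields $F_P(n) = F_{P^*}(n)$ for all $n$, which is exactly $P \sim P^*$.

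The bijection I would use is \emph{order-complementation}. Given an order-preserving map $f \colon P \to \{1,\dots,n\}$, define $\Phi(f) \colon P^* \to \{1,\dots,n\}$ by $\Phi(f)(x) = n + 1 - f(x)$. The key step is to check that $\Phi(f)$ is genuinely order-preserving with respect to the relations of $P^*$. Here I would use the defining property of the dual: $x \leq y$ holds in $P^*$ precisely when $y \leq x$ holds in $P$. So if $x \leq y$ in $P^*$, then $y \leq x$ in $P$, whence $f(y) \leq f(x)$ because $f$ preserves the order of $P$; subtracting from $n+1$ reverses this inequality to give $n+1-f(x) \leq n+1-f(y)$, i.e.\ $\Phi(f)(x) \leq \Phi(f)(y)$, as required.

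It then remains to verify that $\Phi$ is a bijection. The cleanest route is to observe that $\Phi$ is an involution: applying the same complementation rule to $\Phi(f)$ (now using $(P^*)^* = P$) returns the map $x \mapsto n + 1 - (n + 1 - f(x)) = f(x)$, so $\Phi \circ \Phi = \mathrm{id}$. Being its own inverse (after swapping the roles of $P$ and $P^*$), $\Phi$ is a bijection between the two sets of maps. Taking cardinalities gives $F_P(n) = F_{P^*}(n)$ for every $n \geq 1$, and hence the stated equality of order polynomials.

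There is no serious obstacle here: the only thing to get right is the direction of the inequality, which is governed entirely by the defining property of the dual and by the fact that $t \mapsto n+1-t$ is an order-reversing bijection of $\{1,\dots,n\}$. I would simply make sure to state the dual's covering relations carefully at the outset so that the reversal in the order-preservation check is unambiguous.
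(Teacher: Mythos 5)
Your proof is correct and uses exactly the same argument as the paper: the complementation map $f \mapsto n+1-f(\cdot)$ as a bijection between order-preserving maps on $P$ and on $P^*$. You simply spell out the order-reversal check and the involution property in more detail than the paper does.
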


\begin{proof}
Consider the bijection which sends an order-preserving mapping $f : P \rightarrow \{1,2,\ldots ,n\}$ to the mapping $g$, where $g(x) \vcentcolon =   n + 1 - f(x)$. 
The mapping $g$ is order-preserving on $P^*$. 
Thus $F_P(n) = F_{P^*}(n)$.
\end{proof}

\begin{proposition}
$F_{P+Q}(n) = F_{P}(n)F_Q(n).$
\end{proposition}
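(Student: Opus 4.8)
The plan is to exhibit a bijection between the order-preserving maps $P + Q \to [n]$ and the ordered pairs consisting of an order-preserving map $P \to [n]$ together with an order-preserving map $Q \to [n]$, and then to invoke the multiplication principle to conclude $F_{P+Q}(n) = F_P(n) F_Q(n)$.

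First I would consider the map that sends an order-preserving $f : P + Q \to [n]$ to the pair $(f|_P, f|_Q)$ of its restrictions to the underlying element sets of $P$ and $Q$. Since the underlying set of $P + Q$ is by construction the disjoint union of the underlying sets of $P$ and $Q$, this restriction operation is already a bijection at the level of unrestricted functions: any function on $P + Q$ is uniquely determined by, and recoverable from, its two restrictions, simply by gluing. So the entire task is to check that this bijection carries order-preserving maps to pairs of order-preserving maps, and conversely.

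The key step, and the only place any content lives, is the structural observation that the comparabilities of $P + Q$ are exactly the union of the comparabilities of $P$ with those of $Q$, together with the absence of any comparability between an element of $P$ and an element of $Q$. Consequently $f$ preserves order on $P + Q$ if and only if $f|_P$ preserves order on $P$ and $f|_Q$ preserves order on $Q$: every constraint of the form $f(x) \le f(y)$ that is imposed on $f$ arises from a relation $x \le y$ lying entirely inside $P$ or entirely inside $Q$, so no single constraint couples the two restrictions. This shows simultaneously that the restriction map sends order-preserving maps to pairs of order-preserving maps and that any such pair glues back to an order-preserving map on $P + Q$, establishing the claimed bijection.

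I do not anticipate a genuine obstacle here: the statement is essentially the multiplicativity of counting independent choices, and all of the substance reduces to the single fact above about the relation set of a disjoint union. The only care required is to phrase the equivalence of order-preservation cleanly enough that the bijection is visibly well-defined in both directions; once that is in place, counting the pairs yields $F_P(n)\, F_Q(n)$ and hence $F_{P+Q}(n) = F_P(n) F_Q(n)$.
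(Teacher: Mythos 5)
Your proposal is correct and is essentially the same argument the paper gives: the paper also constructs the order-preserving map on $P+Q$ by gluing a pair of order-preserving maps on $P$ and on $Q$, using the fact that elements of $P$ and $Q$ are incomparable in $P+Q$, and notes every order-preserving map on $P+Q$ arises this way. Your version just spells out the restriction/gluing bijection a bit more explicitly.
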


\begin{proof}
Since the elements from $P$ and the elements from $Q$ are incomparable in $P+Q$, every choice of order-preserving maps $f$ on $P$ and $g$ on $Q$ gives rise to an-order preserving map
$$ h(x) \vcentcolon = \left\{\begin{array}{lr}
        f(x) & \text{if } x \in P\\
        g(x) & \text{if } x \in Q
        \end{array}\right\} $$
 on $P+Q$, and it is not hard to see that every order-preserving map on $P+Q$ is of this form.
\end{proof}

These operations can be used to generate larger, more complicated pairs of doppelgangers out of smaller pairs.
For example, if $Q \sim R$, then we get that
$$ F_{P+Q} = F_P F_Q = F_P F_R = F_{P+R},$$
and so $P + Q$ and $P + R$ are doppelgangers for all posets $P$.
Analogously, in Corollary 3.6 we will also see that $P \oplus Q$ and $P \oplus R$ are doppelgangers whenever $Q \sim R$. In fact, the Ur-operation provides a direct generalization of this property, given in Theorem \ref{blowuptheorem}. While we do not provide results on the ordinal product, the Ur-operation generalizes the ordinal product along with direct and ordinal sum.

The term doppelganger originally referred to unlabeled posets, but extends easily to labeled posets $(P,\omega)$. A \textit{labeled poset} $(P,\omega)$ is a poset $P$ equipped with a bijective labeling $\omega\colon P\to[|P|]$.
In this case, a map $f\colon(P,\omega)\to[m]$ is order-preserving when $f(x)\leq f(y)$ whenever $x\leq y$, and $f(x)<f(y)$ whenever $x<y$ and $\omega(x)>\omega(y)$. The number of such maps is the order polynomial of $(P,\omega)$, denoted $\Omega_{P,\omega}(m)$. In fact, every unlabeled poset $P$ may be written as a labeled poset $(P,\omega)$ where $\omega$ is a \textit{natural labeling} or a linear extension of $P$, that is when $\omega(x)<\omega(y)$ whenever $x<y$. In this case $F_P=\Omega_{P,\omega}$. In fact, labeled posets admit an interesting generalization of the order polynomial studied recently by McNamara and Ward \cite{McNamara}. The \textit{multivariate generating function} of $(P,\omega)$ is
\[
K_{P,\omega}(x) = \sum\limits_{f \in (P,\omega)-\text{partitions}} x_1^{|f^{-1}(1)|}x_2^{|f^{-1}(2)|} \ldots
\]
and related to $\Omega_{P,\omega}$ by 
\[
\Omega_{P,\omega}(m) = K_{P,\omega}(\underbrace{1,\ldots,1}_{m},0,\ldots).
\]
Here, $(P,\omega)$-partitions differ from order preserving maps only in that they map to the positive integers rather than $[m]$.
\section{Order Polynomial Recurrence}
\subsection{The Recurrence Relations \label{recurrences}}
We now formalize the recurrences given in Lemma \ref{Irec}. Given a poset $P$ with incomparable elements $x$ and $y$, we can define the poset $P|x\leq y$ to be the result of adding the cover relation $x\leq y$ and all other relations implied by transitivity.
We can define the poset $P|x=y$ to be the result of identifying $x$ and $y$. Further, note that labeled posets can be viewed as an assignment of strict and weak edges. This allows us to define $(P,\omega)|x<y$, $(P,\omega)$ with the added relation $x<y$ and all other relations implied by transitivity. This last restriction might not result in a valid labeled poset, but order preserving functions, and thus the order polynomial and multivariate generating functions are still well-defined on these improper posets. As an example, we offer the proof of recurrence (1.1) originally proposed by Johnson, the rest follow similarly.
\begin{example}
\[F_P=F_{P|x\leq y}+F_{P|y\leq x}-F_{P|x=y}.\]
\end{example}
\begin{proof}
In this relation, an order-preserving map $f\colon P\to[n]$ either has $f(x)<f(y)$ in which case it is counted by the first term, $f(x)>f(y)$ in which case it is counted by the second term, or $f(x)=f(y)$ in which case it is counted by all three terms.
\end{proof}
Equation (1.3) illuminates McNamara and Ward's first unexplained example as demonstrated in Figure \ref{fig:MW}. This ends our discussion of $K_{P,\omega}$, but application of our methodology to the function is a possible direction of further research.
\begin{figure}[htb]
\begin{center}
\begin{tabular}{c c c c c c}
\begin{tikzpicture}[scale=0.3]
\node[draw, circle, inner sep=0pt, minimum size=5pt] at (-1,0) (a) {\tiny x};
\node[draw, circle, inner sep=0pt, minimum size=5pt] at (1,0) (b) {\tiny y};
\node[draw, circle, inner sep=0pt, minimum size=5pt] at (-1,2) (c) {};
\node[draw, circle, inner sep=0pt, minimum size=5pt] at (1,2) (d) {};
\node[draw, circle, inner sep=0pt, minimum size=5pt] at (1,4) (e) {};
\draw (b) -- (c) -- (a) -- (d);
\draw (d) -- (e);
\draw[double] (b) -- (d);
\end{tikzpicture}
&\begin{tikzpicture}[scale=0.3]
\node[draw, circle, inner sep=0pt, minimum size=5pt] at (0,0) (a) {\tiny x};
\node[draw, circle, inner sep=0pt, minimum size=5pt] at (0,2) (b) {\tiny y};
\node[draw, circle, inner sep=0pt, minimum size=5pt] at (-1,4) (c) {};
\node[draw, circle, inner sep=0pt, minimum size=5pt] at (1,4) (d) {};
\node[draw, circle, inner sep=0pt, minimum size=5pt] at (1,6) (e) {};
\draw (b) -- (c);
\draw (d) -- (e);
\draw[double] (b) -- (d);
\draw[double] (a) -- (b);
\end{tikzpicture}
&\begin{tikzpicture}[scale=0.3]
\node[draw, circle, inner sep=0pt, minimum size=5pt] at (-1,2) (a) {\tiny x};
\node[draw, circle, inner sep=0pt, minimum size=5pt] at (0,0) (b) {\tiny y};
\node[draw, circle, inner sep=0pt, minimum size=5pt] at (-1,4) (c) {};
\node[draw, circle, inner sep=0pt, minimum size=5pt] at (1,4) (d) {};
\node[draw, circle, inner sep=0pt, minimum size=5pt] at (1,6) (e) {};
\draw (c) -- (a) -- (d);
\draw (d) -- (e);
\draw[double] (b) -- (d);
\draw (a) -- (b);
\end{tikzpicture}
&\begin{tikzpicture}[scale=0.3]
\node[draw, circle, inner sep=0pt, minimum size=5pt] at (0,0) (a) {};
\node[draw, circle, inner sep=0pt, minimum size=5pt] at (1,2) (b) {\tiny y};
\node[draw, circle, inner sep=0pt, minimum size=5pt] at (-1,2) (c) {\tiny x};
\node[draw, circle, inner sep=0pt, minimum size=5pt] at (1,4) (d) {};
\node[draw, circle, inner sep=0pt, minimum size=5pt] at (-1,4) (e) {};
\draw (e) -- (c);
\draw (a) -- (b) -- (d);
\draw[double] (a) -- (c);
\end{tikzpicture}
&\begin{tikzpicture}[scale=0.3]
\node[draw, circle, inner sep=0pt, minimum size=5pt] at (0,0) (a) {};
\node[draw, circle, inner sep=0pt, minimum size=5pt] at (0,2) (b) {\tiny x};
\node[draw, circle, inner sep=0pt, minimum size=5pt] at (-1,4) (c) {};
\node[draw, circle, inner sep=0pt, minimum size=5pt] at (1,4) (d) {\tiny y};
\node[draw, circle, inner sep=0pt, minimum size=5pt] at (1,6) (e) {};
\draw (b) -- (c);
\draw (d) -- (e);
\draw[double] (b) -- (d);
\draw[double] (a) -- (b);
\end{tikzpicture}
&\begin{tikzpicture}[scale=0.3]
\node[draw, circle, inner sep=0pt, minimum size=5pt] at (1,2) (a) {\tiny y};
\node[draw, circle, inner sep=0pt, minimum size=5pt] at (0,0) (b) {};
\node[draw, circle, inner sep=0pt, minimum size=5pt] at (1,4) (c) {};
\node[draw, circle, inner sep=0pt, minimum size=5pt] at (-1,4) (d) {\tiny x};
\node[draw, circle, inner sep=0pt, minimum size=5pt] at (-1,6) (e) {};
\draw (c) -- (a) -- (d);
\draw (d) -- (e);
\draw[double] (b) -- (d);
\draw (a) -- (b);
\end{tikzpicture}
\\$P$
&$P|x < y$
&$P|y \leq x$
&$Q$
&$Q|x < y$
&$Q|y \leq x$\\
\end{tabular}
\end{center}
\caption{Equivalence of $K_{P,\omega}$ and $K_{Q,\omega}$. Double edges denoted strict order relations.}
\label{fig:MW}
\end{figure}
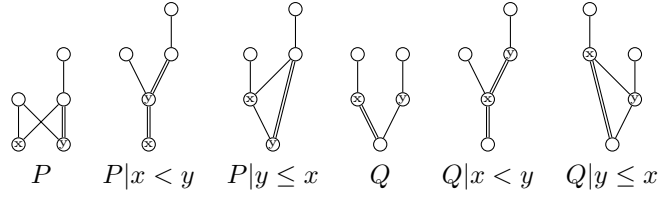

\subsection{Induction on Incomparable Elements \label{sp-results}}
We use the recurrences from Section \ref{recurrences} to prove results involving order polynomials by strong induction on the number of incomparable pairs of elements.
In particular, each of the terms of the recurrences have fewer pairs of incomparable elements than the original poset.
In his early work \cite{Stanley1}, Stanley uses this technique to prove the well-known expression for the strict order polynomial in the $\binom{m}{k}$ basis.
We will first introduce the power of this technique by providing a novel and short proof of Stanley's poset reciprocity theorem, and further offer an expression for the order polynomial of an ordinal sum of posets.

For a labeling $\omega$ of a poset $P$, let $\overline{\omega}$ be the dual labeling to $\omega$ given by $\overline{\omega}(x)=|P|+1-\omega(x)$.
In what follows, we will use the binomial reciprocity theorem, which states that
\[\binom{-n}{p}=\frac{(-n)(-n-1)\ldots(-n-(p-1))}{p!}=(-1)^p\frac{n(n+1)\ldots(n+p-1)}{p!}=(-1)^p\binom{n+p-1}{p}.\]
We provide this proof of Theorem \ref{reciprocity} as a simple introduction to how the recurrence relations are used in practice. In particular, a result is first proved for posets of the form $(C_k,\omega)$ and is then extended to all posets by strong induction on the number of pairs of incomparable elements and the recurrence relation.
\begin{theorem}[Poset Reciprocity]
\label{reciprocity}
For all labeled posets, $(P,\omega)$,
\[\Omega_{P,\overline{\omega}}(m)=(-1)^{|P|}\Omega_{P,\omega}(-m).\]
\end{theorem}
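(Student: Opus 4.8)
The plan is to follow the two-stage strategy announced just before the statement: first verify the identity directly for chains $(C_k,\omega)$, and then bootstrap to arbitrary labeled posets by strong induction on the number of incomparable pairs, using recurrence $(1.2)$ of Lemma~\ref{Irec}.

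For the base case I would first record the closed form for the order polynomial of a labeled chain. Writing $C_k$ as $p_1 \lessdot p_2 \lessdot \cdots \lessdot p_k$, a $(C_k,\omega)$-partition into $[m]$ is a sequence $f(p_1),\dots,f(p_k)$ that is weakly increasing, and strictly increasing exactly across those covers $p_i \lessdot p_{i+1}$ with $\omega(p_i) > \omega(p_{i+1})$ (a descent of $\omega$). If $s$ denotes the number of such strict edges, a standard stars-and-bars count gives
\[
\Omega_{C_k,\omega}(m) = \binom{m + k - 1 - s}{k}.
\]
The key observation is that the dual labeling $\overline{\omega}$ exchanges ascents and descents along the chain, so its number of strict edges is $\overline{s} = (k-1) - s$; hence $\Omega_{C_k,\overline{\omega}}(m) = \binom{m+s}{k}$. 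Substituting $-m$ into the displayed formula and applying the binomial reciprocity identity stated above to $\binom{-m+k-1-s}{k}$ then yields $(-1)^k\Omega_{C_k,\omega}(-m) = \binom{m+s}{k}$, which matches $\Omega_{C_k,\overline{\omega}}(m)$ and settles the base case.

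For the inductive step, suppose $(P,\omega)$ has at least one incomparable pair $x,y$; otherwise $P$ is a chain and we are done. Both $P|x\leq y$ and $P|y\leq x$ have strictly fewer incomparable pairs, carry the \emph{same} underlying set and hence the same labeling $\omega$, and, crucially, have the same cardinality $|P|$, so the dual labeling $\overline{\omega}(z) = |P| + 1 - \omega(z)$ is unchanged when passing to these posets. Applying recurrence $(1.2)$ to the dually labeled poset and then the induction hypothesis to each summand gives
\[
\Omega_{P,\overline{\omega}}(m) = \Omega_{P|x\leq y,\overline{\omega}}(m) + \Omega_{P|y\leq x,\overline{\omega}}(m) = (-1)^{|P|}\left(\Omega_{P|x\leq y,\omega}(-m) + \Omega_{P|y\leq x,\omega}(-m)\right).
\]
Applying recurrence $(1.2)$ once more to $\omega$, this time evaluated at $-m$, collapses the bracket to $\Omega_{P,\omega}(-m)$, completing the induction.

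I expect the main obstacle to be the base-case computation rather than the induction: one must correctly track how the strict/weak pattern of a chain transforms under dualization and then align the stars-and-bars formula with the binomial reciprocity identity so that the signs and shifted arguments agree exactly. The inductive step is comparatively routine, but it hinges on the observation, easy to overlook, that $|P|$ and therefore $\overline{\omega}$ are invariant under adding the relations $x\leq y$ or $y\leq x$. This is precisely what lets the factor $(-1)^{|P|}$ pass through the recurrence unchanged and lets us reassemble $\Omega_{P,\omega}(-m)$ at the end.
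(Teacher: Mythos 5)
Your proposal is correct and follows essentially the same argument as the paper: a stars-and-bars closed form for labeled chains combined with binomial reciprocity for the base case, then strong induction on the number of incomparable pairs via recurrence $(1.2)$ for the general case. The base-case bookkeeping (with $s$ strict edges so $j = k-1-s$ non-strict ones) matches the paper's $\binom{m+j}{|P|}$ formula exactly, and the inductive step is identical.
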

\begin{proof}
We shall proceed by strong induction on the number of pairs of incomparable elements in $P$.
For the base case where $P$ has no pairs of incomparable elements, $P$ is a chain.
Then $(P,\omega)$ can be thought to be a chain with $i$ strict edges and $j$ non-strict edges where $i+j=|P|-1$. 
Using a modified stars and bars technique, we get that $\Omega_{P,\omega}(m)=\binom{m+j}{|P|}$.
Since $(P,\overline{\omega})$ is a chain with $j$ strict edges and $i$ non-strict edges, $\Omega_{P,\overline{\omega}}(m)=\binom{m+i}{|P|}$.
Then by the binomial reciprocity theorem,
\[\Omega_{P,\overline{\omega}}(m)=\binom{m+i}{|P|}=(-1)^{|P|}\binom{-(m+i)+|P|-1}{|P|}=(-1)^{|P|}\binom{-m+j}{|P|}=(-1)^{|P|}\Omega_{P,\omega}(-m)\]
which shows the base case.
Now suppose that the result holds for all posets with fewer than $n$ pairs of incomparable elements and suppose that $P$ has $n$ pairs of incomparable elements.
Then let $x,y\in P$ be incomparable.
By our inductive assumption,
\begin{align*}
\Omega_{P,\overline{\omega}}(m)&=\Omega_{P|x\leq y,\overline{\omega}}(m)+\Omega_{P|y\leq x,\overline{\omega}}(m)\\
&=(-1)^{|P|}\Omega_{P|x\leq y,\omega}(-m)+(-1)^{|P|}\Omega_{P|y\leq x,\omega}(-m)\\
&=(-1)^{|P|}\Omega_{P,\omega}(-m)
\end{align*}
which shows the inductive step and completes the proof.
\end{proof}
It is clear from repeated applications Johnson's recurrence that the order polynomial of any poset should have an expression as the sum of the order polynomial of total orders, or \textit{chains}, with $F_{C_k} = \binom{m+k-1}{k}$ where $C_k$ is a chain of cardinality $k$. Indeed, as a consequence of poset reciprocity we can easily derive the expression for the order polynomial in the $\binom{m+k-1}{k}$ or \textit{chain basis}. 

\begin{proposition}
\label{basisformula}
For all posets $P$, there exist $c_k\in\mathbb{N}$ such that
\[F_P(m)=(-1)^{|P|}\sum_{k=h(P)}^{|P|}(-1)^kc_k\binom{m+k-1}{k}\]
where $h(P)$ is the height of $P$ and denotes the number of elements in the largest total order in $P$.
\end{proposition}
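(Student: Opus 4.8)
The plan is to pass from the weak order polynomial $F_P$ to the strict order polynomial via poset reciprocity, expand the strict polynomial in the ordinary binomial basis $\binom{m}{k}$, and then convert back using the binomial reciprocity identity recorded just before Theorem~\ref{reciprocity}. The positivity of the coefficients and the two summation bounds will all come for free from a surjection count carried out on the strict side, which is the essential observation.

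First I would fix a natural labeling $\omega$ of $P$ (a linear extension), so that $F_P=\Omega_{P,\omega}$, while the dual labeling $\overline\omega$ makes every cover relation strict; hence $\Omega_{P,\overline\omega}(m)$ counts exactly the strictly order-preserving maps $f\colon P\to[m]$ (those with $f(x)<f(y)$ whenever $x<y$). Repeating the surjection decomposition used in the opening proposition of Section~\ref{notation} — sorting a map by the size $k$ of its image and choosing which $k$ values of $[m]$ are hit — gives
\[
\Omega_{P,\overline\omega}(m)=\sum_{k}\overline a_k\binom{m}{k},
\]
where $\overline a_k\in\mathbb N$ is the number of \emph{surjective} strict order-preserving maps $P\to[k]$. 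The two support bounds originate here. A surjection onto $[k]$ cannot use more than $|P|$ distinct values, so $\overline a_k=0$ for $k>|P|$. For the lower bound, strictness forces $f$ to be strictly increasing along every chain; along a chain of maximal length $h(P)$ this consumes at least $h(P)$ distinct values, so the image of any strict map has size at least $h(P)$, giving $\overline a_k=0$ for $k<h(P)$. Thus the sum runs exactly over $h(P)\le k\le|P|$ with nonnegative integer coefficients.

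Finally I would assemble the pieces. Poset reciprocity (Theorem~\ref{reciprocity}) gives $\Omega_{P,\overline\omega}(m)=(-1)^{|P|}F_P(-m)$; substituting $-m$ for $m$ and applying binomial reciprocity in the form $\binom{-m}{k}=(-1)^k\binom{m+k-1}{k}$ yields
\[
F_P(m)=(-1)^{|P|}\sum_{k=h(P)}^{|P|}(-1)^k\,\overline a_k\binom{m+k-1}{k},
\]
which is the claimed formula with $c_k=\overline a_k$. I expect the only real obstacle to be bookkeeping the two reciprocity steps correctly — in particular verifying that $\Omega_{P,\overline\omega}$ genuinely is the strict order polynomial, and that the lower summation bound is governed by $h(P)$ rather than by some smaller quantity; once that is in place, the nonnegativity of the $c_k$ needs no separate argument, since they are literally counts of strict surjections.

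As an alternative that avoids reciprocity, the same statement admits a direct strong induction on the number of incomparable pairs using recurrence $(1.1)$. The base case is a chain, where $F_{C_n}(m)=\binom{m+n-1}{n}$ and $h(C_n)=n=|C_n|$, so the formula holds with a single coefficient $c_n=1$. In the inductive step the three terms $F_{P|x\le y}$, $F_{P|y\le x}$, and $-F_{P|x=y}$ carry signs that align: since $P|x=y$ has $|P|-1$ elements its prefactor is $(-1)^{|P|-1}$, which combines with the leading minus sign to match the common factor $(-1)^{|P|}$, so the three chain-basis expansions simply add and their coefficients stay nonnegative integers. Because adding a relation or identifying two elements can only lengthen a longest chain, each term's expansion is supported on indices $k\ge h(P)$ (and $k\le|P|$), so the combined sum has the required support — again with $h(P)$ as the controlling lower bound.
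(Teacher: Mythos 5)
Your main argument is exactly the paper's proof: fix a natural labeling, reduce via poset and binomial reciprocity to expanding $\Omega_{P,\overline{\omega}}$ in the $\binom{m}{k}$ basis, and identify $c_k$ as the number of surjective strict order-preserving maps $P\to[k]$, with the support bounds $h(P)\le k\le |P|$ coming from that surjection count. You simply fill in the details the paper leaves as ``straightforward to verify,'' and the sign bookkeeping checks out, so the proposal is correct and essentially identical to the paper's proof (the inductive alternative you sketch is a valid bonus but not needed).
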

\begin{proof}
Let $\omega$ be a natural labeling for $P$.
By the poset and binomial reciprocity theorems, it suffices to show that there exist $c_k\in\mathbb{N}$ such that $\Omega_{P,\overline{\omega}}(m)=\sum_{k=h(P)}^{|P|}c_k\binom{m}{k}$.
It is straightforward to verify that we can let $c_k$ be the number of surjective strict order-preserving maps $f\colon P\to[k]$.
\end{proof}

In fact, in the chain basis, ordinal sum interacts with the order polynomial just as disjoint union interacts with the order polynomial in the standard basis. That is, the coefficients of $F_{P \oplus Q}$ in the chain basis are given by the convolution of the coefficients of $P$ with those of $Q$. Further, this extends to labeled posets and beyond the chain basis.
In particular, we generalize $\oplus$ to labeled posets in the following way:
given labeled posets $(P,\omega)$ and $(Q,\psi)$, let $\omega\oplus\psi$ be a labeling on $P\oplus Q$ given by
\[(\omega\oplus\psi)(x)=\begin{cases}\omega(x)&x\in P\\|P|+\psi(x)&x\in Q\end{cases}.\]
Then $(P\oplus Q,\omega\oplus\psi)$ is the labeled poset where every element of $P$ is weakly less than every element of $Q$.
The following result gives a formula for the order polynomial of an ordinal sum.
\begin{lemma}
\label{Llemma}
For all labeled posets $(P,\omega),(Q,\psi)$,
\[L(\Omega_{P\oplus Q,\omega\oplus\psi})=L(\Omega_{P,\omega})L(\Omega_{Q,\psi})\]
for any linear transformation $L$ on the polynomials in $m$ such that
\[L\left(\binom{m+c+d-1}{c+d}\right)=L\left(\binom{m+c-1}{c}\right)L\left(\binom{m+d-1}{d}\right)\]
for all integer $c,d\geq0$.
\end{lemma}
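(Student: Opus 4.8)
The plan is to induct on the number of incomparable pairs of $P\oplus Q$. Because every element of $P$ lies below every element of $Q$, this count is exactly the number of incomparable pairs inside $P$ plus the number inside $Q$, so the recurrence will drive both summands down to chains. The base case is therefore $P=C_c$, $Q=C_d$, and the inductive step peels off one incomparable pair using the second recurrence of Lemma~\ref{Irec}, namely $\Omega_{P,\omega}=\Omega_{P|x\leq y,\omega}+\Omega_{P|y\leq x,\omega}$.

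For the inductive step, suppose without loss of generality that $x,y$ are incomparable in $P$ (the case $x,y\in Q$ is symmetric). Since $x,y\in P$, adding the relation $x\leq y$ and its transitive consequences never touches the cross relations $P<Q$, so $(P\oplus Q)|x\leq y=(P|x\leq y)\oplus Q$ carrying the induced labeling, and likewise for $y\leq x$; moreover $x,y$ are incomparable in $P\oplus Q$ precisely when they are in $P$. Applying the recurrence to $x,y$ in $P\oplus Q$, then $L$ by linearity, then the inductive hypothesis to each of $(P|x\leq y)\oplus Q$ and $(P|y\leq x)\oplus Q$ (each has one fewer incomparable pair), and finally linearity and the recurrence again inside $P$, gives
\[L(\Omega_{P\oplus Q})=\big(L(\Omega_{P|x\leq y})+L(\Omega_{P|y\leq x})\big)L(\Omega_Q)=L(\Omega_P)L(\Omega_Q).\]
This step uses only the linearity of $L$; the multiplicativity hypothesis enters solely in the base case.

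For the base case, by the computation in the proof of Theorem~\ref{reciprocity} a labeled chain on $k$ elements with $j$ weak (non-descent) edges has $\Omega=\binom{m+j}{k}$. The unique edge joining the top of $P$ to the bottom of $Q$ under $\omega\oplus\psi$ is an ascent, hence weak, so $P\oplus Q$ is a chain on $c+d$ elements with $j_P+j_Q+1$ weak edges, where $\Omega_{P,\omega}=\binom{m+j_P}{c}$ and $\Omega_{Q,\psi}=\binom{m+j_Q}{d}$. Thus the base case is exactly the identity
\[L\left(\binom{m+j_P+j_Q+1}{c+d}\right)=L\left(\binom{m+j_P}{c}\right)L\left(\binom{m+j_Q}{d}\right).\]

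The main obstacle is that the hypothesis on $L$ constrains only the all-weak chains $\binom{m+n-1}{n}$, whereas the base-case chains carry strict edges whenever $j<k-1$. To bridge this I would pass to the all-weak chain basis $\{\binom{m+n-1}{n}\}_{n\geq0}$ and let $\Phi$ be the linear map sending $\binom{m+n-1}{n}\mapsto t^n$. A short induction using the backward difference $\nabla p(m)=p(m)-p(m-1)$, which sends $\binom{m+n-1}{n}\mapsto\binom{m+n-2}{n-1}$ and so lowers the index by one, together with the vanishing $\binom{a}{c}=0$ for $0\leq a<c$, yields the closed form $\Phi\!\left(\binom{m+j}{k}\right)=t^{\,j+1}(t-1)^{\,k-1-j}$; equivalently this is coefficient extraction from $\sum_{n\geq0}\binom{m+n-1}{n}t^n=(1-t)^{-m}$. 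Because exponents add, $\Phi\!\left(\binom{m+j_P+j_Q+1}{c+d}\right)=\Phi\!\left(\binom{m+j_P}{c}\right)\Phi\!\left(\binom{m+j_Q}{d}\right)$, so the chain-basis coefficients $\gamma_N$ of the left factor are the convolution $\sum_{k+l=N}\alpha_k\beta_l$ of the coefficients $\alpha,\beta$ of the two right factors. Writing $g(n):=L\!\left(\binom{m+n-1}{n}\right)$ and using linearity, the base-case identity becomes $\sum_{k,l}\alpha_k\beta_l\,g(k+l)=\sum_{k,l}\alpha_k\beta_l\,g(k)g(l)$, which holds term by term by the hypothesis $g(k+l)=g(k)g(l)$. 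This is precisely the Cauchy product promised in the statement, and it completes the base case and hence the induction.
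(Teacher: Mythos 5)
Your proof is correct, and its outer skeleton --- strong induction on the number of incomparable pairs of $P\oplus Q$, using $(P\oplus Q)|x\leq y=(P|x\leq y)\oplus Q$ and recurrence (1.2) to reduce to the case where both summands are chains --- is exactly the paper's. You also correctly determine that the new edge in $P\oplus Q$ is weak (the paper's prose at this point says ``$i+k+1$ strict and $j+l$ non-strict,'' a transposition typo that its own displayed formula $\binom{m+j+l+1}{|P|+|Q|}$ contradicts and that you silently fix), so the base case is the identity $L\bigl(\tbinom{m+j_P+j_Q+1}{c+d}\bigr)=L\bigl(\tbinom{m+j_P}{c}\bigr)L\bigl(\tbinom{m+j_Q}{d}\bigr)$ for arbitrary $0\le j_P\le c-1$, $0\le j_Q\le d-1$. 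Where you genuinely diverge is in bridging from this to the hypothesis on $L$, which only constrains the all-weak chains $\binom{m+n-1}{n}$. The paper runs a second induction on the strictness defect $(c-1-j_P)+(d-1-j_Q)$, using Pascal's identity $\binom{m+j}{c}=\binom{m+j+1}{c}-\binom{m+j}{c-1}$ to trade a strict edge for a shorter chain and recombining at the end; it never needs explicit basis coefficients. You instead compute the chain-basis expansion of $\binom{m+j}{k}$ in closed form, $\Phi\bigl(\tbinom{m+j}{k}\bigr)=t^{j+1}(t-1)^{k-1-j}$ (correct; it follows from $\sum_{n\ge0}\binom{m+n-1}{n}t^n=(1-t)^{-m}$ and checks out on small cases), observe that the exponents add under ordinal sum, and finish by term-by-term multiplicativity of $g(n)=L\bigl(\tbinom{m+n-1}{n}\bigr)$. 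Your route costs one extra computation but buys an explicit formula for the coefficients of a labeled chain in the chain basis and makes transparent why the multiplicativity hypothesis on $L$ is precisely what is needed; the paper's double induction is shorter but leaves the coefficients implicit. One cosmetic point: each of $P|x\leq y$ and $P|y\leq x$ has \emph{at least} one fewer incomparable pair (transitivity may remove several at once), which is why the induction must be strong, as you in fact set it up.
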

\begin{proof}
We first show the result in the case where $P$ and $Q$ are chains.
Suppose that $(P,\omega)$ has $i$ strict edges and $j$ non-strict edges and suppose that $(Q,\psi)$ has $k$ strict edges and $l$ non-strict edges.
Then $(P\oplus Q,\omega\oplus\psi)$ has $i+k+1$ strict edges and $j+l$ non-strict edges.
Then it suffices to show that
\[L\left(\binom{m+j}{|P|}\right)L\left(\binom{m+l}{|Q|}\right)=L\left(\binom{m+j+l+1}{|P|+|Q|}\right)\text{ where }j\leq|P|-1,\ l\leq|Q|-1.\]
We shall proceed by induction on $|P|-1-j+|Q|-1-l$.
For the base case of $|P|-1-j+|Q|-1-l=0$, $j=|P|-1$ and $l=|Q|-1$ in which case the result reduces to the hypothesis on $L$.
Now suppose that $|P|-1-j+|Q|-1-l>0$ and that the result holds for smaller values of $|P|-1-j+|Q|-1-l$.
Then without loss of generality, $j<|P|-1$ and
\begin{align*}
L\left(\binom{m+j}{|P|}\right)L\left(\binom{m+l}{|Q|}\right)&=L\left(\binom{m+j+1}{|P|}-\binom{m+j}{|P|-1}\right)L\left(\binom{m+l}{|Q|}\right)\\
&=L\left(\binom{m+j+1}{|P|}\right)L\left(\binom{m+l}{|Q|}\right)-L\left(\binom{m+j}{|P|-1}\right)L\left(\binom{m+l}{|Q|}\right)\\
&=L\left(\binom{m+j+l+2}{|P|+|Q|}\right)-L\left(\binom{m+j+l+1}{|P|+|Q|-1}\right)\\
&=L\left(\binom{m+j+l+1}{|P|+|Q|}\right)
\end{align*}
which shows the inductive step and completes the proof of the case where $P$ and $Q$ are chains.
For the general result, we shall proceed by strong induction on the number of pairs of incomparable elements in $P\oplus Q$.
For the base case where $P\oplus Q$ has no pairs of incomparable elements, $P$ and $Q$ are chains which was dealt with above.
Now suppose that the result holds for all posets where $P\oplus Q$ has fewer than $n$ pairs of incomparable elements and suppose that $P\oplus Q$ has $n$ pairs of incomparable elements.
Then without loss of generality, $P$ has an incomparable pair of elements, $x$ and $y$.
Then by the linearity of $L$ and our inductive assumption,
\begin{align*}
L(\Omega_{P\oplus Q,\omega\oplus\psi})&=L(\Omega_{(P\oplus Q)|x\leq y,\omega\oplus\psi}+\Omega_{(P\oplus Q)|y\leq x,\omega\oplus\psi})\\
&=L(\Omega_{(P|x\leq y)\oplus Q,\omega\oplus\psi})+L(\Omega_{(P|y\leq x)\oplus Q,\omega\oplus\psi})\\
&=L(\Omega_{P|x\leq y,\omega})L(\Omega_{Q,\psi})+L(\Omega_{P|y\leq x,\omega})L(\Omega_{Q,\psi})\\
&=L(\Omega_{P|x\leq y,\omega}+\Omega_{P|y\leq x,\omega})L(\Omega_{Q,\psi})\\
&=L(\Omega_{P,\omega})L(\Omega_{Q,\psi})
\end{align*}
which shows the inductive step and completes the proof.
\end{proof}
This implies our desired result in the chain basis.
\begin{corollary}
If $F_P(m) = \sum_{i=1}^{|P|} a_i {\binom{m + k - 1}{k}}$ and $F_Q(n) = \sum_{j=1}^{|Q|} b_j {\binom{m + k - 1}{k}}$, then
$$ F_{P\oplus Q}(n) = \sum_{k=1}^{|P|+|Q|} \left( \sum_{i=1}^k a_i b_{k-i} \right) {\binom{m + k - 1}{k}} .$$
Thus the coefficients of $F_{P\oplus Q}$ in the chain basis are given by the Cauchy product of the coefficients of $F_P$ and the coefficients of $F_Q$ in the chain basis.
\end{corollary}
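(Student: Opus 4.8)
The plan is to derive this corollary directly from Lemma \ref{Llemma} by choosing a single linear transformation $L$ that converts the chain-basis expansion of an order polynomial into an ordinary generating function in a fresh formal variable $t$. First I would pass from the unlabeled to the labeled setting: fix natural labelings $\omega$ of $P$ and $\psi$ of $Q$, so that $F_P=\Omega_{P,\omega}$ and $F_Q=\Omega_{Q,\psi}$. Since the concatenation $\omega\oplus\psi$ is again a natural labeling of $P\oplus Q$, we also have $F_{P\oplus Q}=\Omega_{P\oplus Q,\omega\oplus\psi}$. This lets me replace every $F$ appearing in the statement by the corresponding $\Omega$ and thereby bring Lemma \ref{Llemma} to bear.

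Next I would define $L$ on the space of polynomials in $m$ by prescribing its action on the chain basis: set $L\!\left(\binom{m+k-1}{k}\right)=t^k$ for each $k\geq 0$ and extend linearly. This is well defined precisely because $\binom{m+k-1}{k}$ has degree $k$ in $m$, so the family $\bigl\{\binom{m+k-1}{k}\bigr\}_{k\geq 0}$ is a graded basis for the polynomials in $m$. The crucial point is that this $L$ satisfies the hypothesis of Lemma \ref{Llemma}: for all integers $c,d\geq 0$ we have $L\!\left(\binom{m+c+d-1}{c+d}\right)=t^{c+d}=t^{c}\,t^{d}=L\!\left(\binom{m+c-1}{c}\right)L\!\left(\binom{m+d-1}{d}\right)$, which holds by construction.

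With $L$ in hand, applying Lemma \ref{Llemma} gives $L(\Omega_{P\oplus Q,\omega\oplus\psi})=L(\Omega_{P,\omega})\,L(\Omega_{Q,\psi})$. By the definition of $L$ together with the chain-basis expansions of $F_P$ and $F_Q$, the right-hand side equals $\left(\sum_i a_i t^i\right)\left(\sum_j b_j t^j\right)=\sum_k\left(\sum_i a_i b_{k-i}\right)t^k$, an identity of honest polynomials in $t$. Since $L$ sends the chain basis bijectively to the monomials $t^k$, the coefficient of $t^k$ on the left is exactly the coefficient of $\binom{m+k-1}{k}$ in the chain-basis expansion of $F_{P\oplus Q}$, so matching coefficients of $t^k$ yields the claimed Cauchy product. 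I do not expect a serious obstacle here, as Lemma \ref{Llemma} does the real work; the only points requiring care are verifying that $L$ is genuinely well defined and invertible on the chain basis (so that reading off coefficients of $t^k$ legitimately recovers the chain-basis coefficients of $F_{P\oplus Q}$), and confirming that concatenating natural labelings of $P$ and $Q$ produces a natural labeling of $P\oplus Q$.
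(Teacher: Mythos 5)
Your proposal is correct and is essentially the paper's own argument: the paper likewise defines the linear transformation sending $\binom{m+k-1}{k}\mapsto x^k$, notes that these binomials form a basis so the map is well defined and satisfies the hypothesis of Lemma~\ref{Llemma}, and reads off the Cauchy product. You simply spell out the details (natural labelings, verification of the multiplicativity hypothesis, and coefficient extraction) that the paper leaves implicit.
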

\begin{proof}
Since the polynomials $\binom{m+k-1}{k}$ form a basis for the polynomials in $m$, the linear transformation sending $\binom{m+k-1}{k}\mapsto x^k$ satisfies the requirements of Lemma~\ref{Llemma}.
\end{proof}
Further, Lemma \ref{Llemma} provides immediate results on doppelgangers with respect to ordinal sum.
\begin{corollary}
For labeled posets $(P,\omega),(P^\prime,\omega^\prime),(Q,\psi),(Q^\prime,\psi^\prime)$, any two conditions imply the third:

1) $(P,\omega)\sim(P^\prime,\omega^\prime)$

2) $(Q,\psi)\sim(Q^\prime,\psi^\prime)$

3) $(P\oplus Q,\omega\oplus\psi)\sim(P^\prime\oplus Q^\prime,\omega^\prime\oplus\psi^\prime)$
\label{twoimplythird}
\end{corollary}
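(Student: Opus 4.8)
The plan is to transport the three conditions through the linear transformation $L$ of Lemma~\ref{Llemma}, turning a statement about ordinal sums into a statement about multiplication in the polynomial ring $\mathbb{Q}[x]$. First I would take $L$ to be the map sending $\binom{m+k-1}{k}\mapsto x^k$, which satisfies the hypothesis of Lemma~\ref{Llemma} exactly as in the proof of the preceding corollary. Since the polynomials $\binom{m+k-1}{k}$ form a basis of $\mathbb{Q}[m]$ and the monomials $x^k$ form a basis of $\mathbb{Q}[x]$, the map $L$ is a $\mathbb{Q}$-vector-space isomorphism. Consequently $(P,\omega)\sim(P',\omega')$ if and only if $\Omega_{P,\omega}=\Omega_{P',\omega'}$ if and only if $L(\Omega_{P,\omega})=L(\Omega_{P',\omega'})$, and similarly for each of the other pairs appearing in the statement.

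Next I would abbreviate $p=L(\Omega_{P,\omega})$, $p'=L(\Omega_{P',\omega'})$, $q=L(\Omega_{Q,\psi})$, and $q'=L(\Omega_{Q',\psi'})$, all elements of $\mathbb{Q}[x]$. By Lemma~\ref{Llemma} we have $L(\Omega_{P\oplus Q,\omega\oplus\psi})=pq$ and $L(\Omega_{P'\oplus Q',\omega'\oplus\psi'})=p'q'$. Under the dictionary of the previous paragraph, condition (1) becomes $p=p'$, condition (2) becomes $q=q'$, and condition (3) becomes $pq=p'q'$. The corollary is therefore equivalent to the purely algebraic claim that, among the equations $p=p'$, $q=q'$, and $pq=p'q'$ in $\mathbb{Q}[x]$, any two imply the third.

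To finish I would verify this algebraic claim using the fact that $\mathbb{Q}[x]$ is an integral domain. The implication $(1)\wedge(2)\Rightarrow(3)$ is immediate by substitution. For $(1)\wedge(3)\Rightarrow(2)$, from $p=p'$ and $pq=p'q'$ I obtain $p(q-q')=0$; since $\mathbb{Q}[x]$ has no zero divisors and $p\neq 0$, cancellation yields $q=q'$. The implication $(2)\wedge(3)\Rightarrow(1)$ is entirely symmetric. The one point requiring care, which I expect to be the crux of the argument, is the nonvanishing needed for cancellation: I must know $p=L(\Omega_{P,\omega})\neq 0$ and $q=L(\Omega_{Q,\psi})\neq 0$. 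This holds because each order polynomial is a nonzero element of $\mathbb{Q}[m]$ — it has degree $|P|$ by the chain-basis expansion of Proposition~\ref{basisformula} (equivalently, any linear extension shows $\Omega_{P,\omega}(m)\geq 1$ for $m\geq|P|$) — and $L$, being an isomorphism, carries nonzero polynomials to nonzero polynomials. With the cancellation thereby justified, all three implications follow and the corollary is proved.
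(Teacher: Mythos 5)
Your proposal is correct and follows exactly the route the paper intends: the paper derives this corollary immediately from Lemma~\ref{Llemma} via the transformation $\binom{m+k-1}{k}\mapsto x^k$, and your cancellation argument in the integral domain $\mathbb{Q}[x]$ (together with the observation that $\Omega_{P,\omega}\neq 0$) supplies precisely the details the paper leaves implicit.
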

\begin{corollary}
\label{opluscommute}
For all labeled posets $(P,\omega),(Q,\psi)$,
\[(P\oplus Q,\omega\oplus\psi)\sim(Q\oplus P,\psi\oplus\omega).\]
\end{corollary}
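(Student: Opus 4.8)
The plan is to deduce this directly from Lemma~\ref{Llemma} by exploiting the commutativity of polynomial multiplication together with an invertible choice of $L$. First I would fix the linear transformation $L$ that sends each chain-basis element $\binom{m+k-1}{k}$ to the monomial $x^k$; as observed in the proof of the Cauchy-product corollary above, this $L$ satisfies the hypothesis of Lemma~\ref{Llemma}, since $\binom{m+c+d-1}{c+d}\mapsto x^{c+d}=x^cx^d$ is indeed the product of the images of $\binom{m+c-1}{c}$ and $\binom{m+d-1}{d}$. Crucially, because $\left\{\binom{m+k-1}{k}\right\}$ is a basis for the polynomials in $m$, this $L$ is a bijection and hence invertible.

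Next I would apply Lemma~\ref{Llemma} to both orderings of the ordinal sum. This gives
\[L(\Omega_{P\oplus Q,\omega\oplus\psi})=L(\Omega_{P,\omega})\,L(\Omega_{Q,\psi})\]
and
\[L(\Omega_{Q\oplus P,\psi\oplus\omega})=L(\Omega_{Q,\psi})\,L(\Omega_{P,\omega}).\]
Since the right-hand sides are products of the same two polynomials in $x$, and polynomial multiplication is commutative, the two images under $L$ coincide. Applying $L^{-1}$ to both sides then yields the equality of order polynomials $\Omega_{P\oplus Q,\omega\oplus\psi}=\Omega_{Q\oplus P,\psi\oplus\omega}$, which is precisely the assertion $(P\oplus Q,\omega\oplus\psi)\sim(Q\oplus P,\psi\oplus\omega)$.

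There is no serious obstacle here: all of the content has already been packaged into Lemma~\ref{Llemma}, and the only point worth emphasizing is that the invertibility of $L$ is exactly what allows us to cancel it and transfer the commutativity of multiplication in the $x$-variable back into an equality of the original order polynomials. One could alternatively argue purely at the level of coefficients, since the chain-basis coefficients of $F_{P\oplus Q}$ are the Cauchy convolution of those of $F_P$ with those of $F_Q$, and this convolution is manifestly symmetric in its two arguments. I prefer the $L$-based formulation, however, since it handles the labeled posets $(P,\omega)$ and $(Q,\psi)$ uniformly and makes the role of commutativity completely transparent.
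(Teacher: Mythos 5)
Your proof is correct and matches the paper's intended argument: the paper states this corollary as an immediate consequence of Lemma~\ref{Llemma}, using exactly the invertible $L$ sending $\binom{m+k-1}{k}\mapsto x^k$ and the commutativity of polynomial multiplication. Nothing is missing.
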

Note that if $\omega$ is a natural labeling on $P$ and if $\psi$ is a natural labeling on $Q$ then $\omega\oplus\psi$ is a natural labeling on $P\oplus Q$.
As a consequence, Corollaries 2.8 and 2.9 also hold for unlabeled posets.
In Section 4, we will extend the unlabeled (naturally labeled) version of Corollary \ref{I2-3} to the Ur-operation. Despite its simplicity, Corollary \ref{Isym} has merit on its own, and easily recovers one of the doppelganger pairs discussed in \cite{Hamaker}.
If we let $C_n$ denote the total order on $n$ elements and if we let $A_n$ denote a collection of $n$ elements with no relations between them, then the posets in this example can be expressed by the ordinal sum as follows.
\begin{example}
Hamaker et al. show $C_{n-1} \oplus A_2 \oplus C_{n-1} \sim A_2 \oplus C_{n-1} \oplus C_{n-1}$ (see Figure 1 in \cite{Hamaker}). This pair of doppelgangers is an immediate consequence of Corollary \ref{opluscommute}.
\end{example}

\section{The Ur-Operation}
Section \ref{sp-results} details the interactions of the order polynomial and standard poset operations. By considering a generalization of these operations, it is possible in turn to extend our results. The operation itself is simple: consider replacing some subset of points in a poset $\mathscr P$ by a corresponding set of posets $\{P_1,\cdots,P_k\}$.
\begin{definition}
For a poset $\mathscr{P}=\{x_1,\cdots,x_n\}$ and a sequence of posets $\{P_1,\cdots,P_n\}$, let $\mathscr{P}[x_k\to P_k]_{k=1}^n$ be the poset on $\bigcup_kP_k$ with the following operation:
\[\text{For }p\in P_j,q\in P_k,\ p\leq q\text{ when }\begin{cases}p\leq q&j=k\\x_j\leq x_k&j\neq k\end{cases}.\]
We denote this as the Ur-operation on $\mathscr{P}$ by $\{P_1,\cdots,P_n\}$. All $P_k$ are assumed to be $C_1$ if not specified.
\end{definition}
Note that the disjoint sum operation denoted by $P_1+P_2$ can be expressed as $A_2[x_k\to P_k]_{k=1}^2$, the ordinal sum operation denoted by $P_1\oplus P_2$ can be expressed as $C_2[x_k\to P_k]_{k=1}^2$, and the ordinal product can be expressed as $P[x_k\to Q]_{k=1}^n$. 

The order polynomial of the Ur-operation relies heavily on the structure $\mathscr P$. Therefore it is convenient throughout the rest of this section to have the following definition
\begin{definition}
For a poset $P$ and $x\in P$, define $g_x^P(n,m)$ to be the number of order-preserving maps $f: P[x \to \varnothing] \to [m]$ such that there are exactly $n$ ways to extend $f$ to an order preserving map $\widetilde{f}: P \to [m].$
\end{definition}
\begin{example}
For a chain $C_7$ and its $4$th smallest (middle) element $e_4$:
\[
g^{C_7}_{e_4}(n,m) = \sum\limits_{i=1}^{m-n} F_{C_{2}}(i)F_{C_{2}}(m-i-n+2)
\]
\end{example}
\subsection{The Order Polynomial}
With this in hand, we offer a simple formula for the order polynomial of a single substitution. The polynomial for the general operation may be given by repeated application
\begin{proposition}
\label{singleblowupformula}
For a poset $\mathscr{P}$ with $x\in\mathscr{P}$, a poset $Q$, and $m\geq1$,
\[F_{\mathscr{P}[x\to Q]}(m)=\sum_{n=1}^m g^{\mathscr P}_x(n,m)F_Q(n).\]
\end{proposition}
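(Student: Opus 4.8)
The plan is to prove the identity by a fiber-counting (essentially bijective) argument: each order-preserving map on $\mathscr{P}[x\to Q]$ splits uniquely into its behavior on the substituted copy of $Q$ and its behavior on the remaining points $\mathscr{P}\setminus\{x\}$, and I would group the maps on $\mathscr{P}\setminus\{x\}$ according to how much ``room'' they leave for the $Q$-block.

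First I would fix an order-preserving map $h\colon\mathscr{P}[x\to Q]\to[m]$ and decompose it into its restriction $f=h|_{\mathscr{P}\setminus\{x\}}$ to the non-$x$ points and its restriction $g=h|_Q$ to the substituted copy of $Q$. By the definition of the Ur-operation, $h$ is order-preserving precisely when $f$ is order-preserving on $\mathscr{P}\setminus\{x\}$, $g$ is order-preserving on $Q$, and the two are compatible: for $z\in\mathscr{P}\setminus\{x\}$ and $p\in Q$, the relation $z\leq p$ holds exactly when $z\leq x$ in $\mathscr{P}$, and $p\leq z$ holds exactly when $x\leq z$. I would then observe that this compatibility is equivalent to the single condition that the image $g(Q)$ lie inside the integer interval $I_f=[a,b]$, where $a=\max\{f(z):z<x\}$ (with $a=1$ if no such $z$ exists) and $b=\min\{f(z):z>x\}$ (with $b=m$ if no such $z$ exists). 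This is exactly the set of legal values for a one-point extension of $f$ to $x$, so $|I_f|$ is precisely the quantity $n$ in the definition of $g^{\mathscr{P}}_x$.

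Next comes the structural heart of the argument. Because $f$ is order-preserving and any $z<x<z'$ forces $z<z'$ in $\mathscr{P}$, we get $a\leq b$, so $I_f$ is nonempty and contains some $n\geq1$ integers. As a totally ordered subset of $[m]$, $I_f$ is order-isomorphic to the chain $[n]$, so the number of order-preserving maps $g\colon Q\to I_f$ is exactly $F_Q(n)$. Hence, for each fixed $f$ with $|I_f|=n$, the number of compatible $g$ depends only on $n$ and equals $F_Q(n)$. Summing over all $f$ and partitioning them by the value $n=|I_f|$ (which, as noted, is the number of one-point extensions of $f$ to $x$, and which ranges over $1,\dots,m$), the number of $f$ with a given $n$ is by definition $g^{\mathscr{P}}_x(n,m)$, yielding
\[
F_{\mathscr{P}[x\to Q]}(m)=\sum_{n=1}^m g^{\mathscr{P}}_x(n,m)\,F_Q(n).
\]

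The one step requiring genuine care is the second one: verifying that the Ur-operation's two families of compatibility relations (from all elements below $x$ and all elements above $x$) collapse to the single window condition $g(Q)\subseteq I_f$, and that $h\mapsto(f,g)$ is a true bijection onto the pairs satisfying this condition. The remaining ingredients—the order-isomorphism $I_f\cong[n]$, the nonemptiness $a\leq b$, and the final regrouping of the sum—are routine once the window description is in place.
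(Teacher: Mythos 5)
Your proof is correct and takes essentially the same route as the paper: the paper's own proof is a one-sentence sketch of precisely this decomposition (sum over order-preserving maps on $\mathscr{P}[x\to\varnothing]$, then count compatible maps on $Q$), and your window argument $g(Q)\subseteq I_f\cong[n]$ is the careful justification of that sketch.
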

\begin{proof}
The result follows from summing over all order preserving functions $f: \mathscr P[x \to \varnothing] \to [m]$ and counting at each step the possible order preserving functions on Q that satisfy the arrangement.
\end{proof}
As expected, the formulae for direct and ordinal sum follow immediately from our generalization:
\begin{corollary}
$F_{P + Q}(m) = F_P(m)F_Q(m)$, and $F_{P \oplus Q} = \sum\limits_{i=1}^{m}F_Q(m+1-i)(F_P(i) - F_P(i-1))$ where $F_P(0)$ is defined to be 0.
\end{corollary}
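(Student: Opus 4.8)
The plan is to realize both operations as single Ur-substitutions and then invoke Proposition \ref{singleblowupformula}, reducing each identity to a computation of the function $g^{\mathscr P}_x(n,m)$. For the disjoint union, write $P + Q = \mathscr P[x\to Q]$ where $\mathscr P$ is $P$ together with one additional element $x$ incomparable to everything in $P$ (that is, $\mathscr P = A_2[x_1\to P]$ and $x = x_2$). Then $\mathscr P[x\to\varnothing] = P$, so $g^{\mathscr P}_x(n,m)$ counts the order-preserving maps $f\colon P\to[m]$ admitting exactly $n$ extensions. Since $x$ is incomparable to all of $P$, it may take any of the $m$ values in $[m]$ independent of $f$; hence every $f$ has exactly $m$ extensions. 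Thus $g^{\mathscr P}_x(n,m) = 0$ unless $n = m$, in which case it equals $F_P(m)$. Proposition \ref{singleblowupformula} then collapses the sum to its single surviving term $g^{\mathscr P}_x(m,m)F_Q(m) = F_P(m)F_Q(m)$.

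For the ordinal sum, set $P\oplus Q = \mathscr P[x\to Q]$ with $\mathscr P = C_2[x_1\to P]$ and $x = x_2$; that is, $\mathscr P$ is $P$ with a single new element $x$ placed above every element of $P$. Again $\mathscr P[x\to\varnothing] = P$, and $g^{\mathscr P}_x(n,m)$ counts the order-preserving $f\colon P\to[m]$ having exactly $n$ extensions. Here the constraint $f(x)\geq f(p)$ for all $p\in P$ forces $f(x)\geq \max_{p\in P} f(p)$, so if $j = \max_{p} f(p)$ then $x$ has exactly $m - j + 1$ allowable values. Consequently $f$ admits exactly $n$ extensions precisely when its maximum value equals $j = m - n + 1$, and the number of such maps is $F_P(m-n+1) - F_P(m-n)$ (maps into $[j]$ minus those into $[j-1]$), with the convention $F_P(0) = 0$. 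Substituting into Proposition \ref{singleblowupformula} gives
\[
F_{P\oplus Q}(m) = \sum_{n=1}^m \bigl(F_P(m-n+1) - F_P(m-n)\bigr)F_Q(n),
\]
and re-indexing by $i = m-n+1$ (so $n = m+1-i$ and $m-n = i-1$) yields the stated formula $\sum_{i=1}^m F_Q(m+1-i)(F_P(i) - F_P(i-1))$.

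The computations are routine once the two auxiliary posets $\mathscr P$ are set up correctly; the only point requiring care is the interpretation of $g^{\mathscr P}_x(n,m)$ as a distribution over the number of extensions. In the ordinal-sum case one must track that ``$n$ extensions'' corresponds to ``maximum value $m-n+1$,'' so that the telescoping difference $F_P(j) - F_P(j-1)$ and the final index shift come out consistently. I expect this bookkeeping of the index shift to be the main (mild) obstacle; everything else follows directly from Proposition \ref{singleblowupformula} and the definition of $g^{\mathscr P}_x$.
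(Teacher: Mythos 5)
Your proposal is correct and follows exactly the paper's route: both operations are realized as single Ur-substitutions $(P+e)[e\to Q]$ and $(P\oplus e)[e\to Q]$, the function $g^{\mathscr P}_x(n,m)$ is computed in each case, and Proposition \ref{singleblowupformula} is applied, with the same final reindexing $i=m+1-n$. You supply more detail than the paper in deriving $g_e^{P\oplus e}(n,m)=F_P(m+1-n)-F_P(m-n)$, which the paper merely asserts.
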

\begin{proof}
By Proposition~\ref{singleblowupformula},
\begin{align*}
F_{P+Q}(m)&=F_{(P+e)[e\to Q]}(m)\\
&=\sum_{n=1}^mg_e^{P+e}(n,m)F_Q(n)\\
&=F_P(m)F_Q(m).
\end{align*}
where we leverage $g_e^{P+e}(n,m)=\begin{cases}F_P(m)&n=m\\0&n\neq m\end{cases}$

\noindent Similarily,
\begin{align*}
F_{P\oplus Q}(m)&=F_{(P\oplus e)[e\to Q]}(m)\\
&=\sum_{n=1}^mg_e^{P\oplus e}(n,m)F_Q(n)\\
&=\sum_{n=1}^m(F_P(m+1-n)-F_P(m-n))F_Q(n)
\end{align*}
where the result follows by replacing $n$ by $i=1+m+1-n$.
\end{proof}
Moreover, the following result shows that the Ur-operation generalizes the nice relation between ordinal sum and doppelgangers given by Corollaries \ref{twoimplythird} and \ref{opluscommute}. In particular, we have Theorem 1.6 which we restate below.
\begin{theorem}
\label{blowuptheorem}
For a poset $\mathscr P=\{x_1,\cdots,x_n\}$ and two sequences of posets $\{P_1,\ldots,P_n\}$ and $\{Q_1,\ldots,Q_n\}$ such that $P_i\sim Q_i$, we have that $\mathscr P[x_k\to P_k]_{k=1}^n\sim\mathscr P[x_k\to Q_k]_{k=1}^n$.
\end{theorem}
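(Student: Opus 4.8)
The plan is to reduce the general statement to the single-substitution case already handled by Proposition~\ref{singleblowupformula}, which contains the essential idea: the counting function $g_x^{\mathscr{P}}(n,m)$ depends only on the base poset $\mathscr{P}$ and the point $x$, and not at all on the poset being substituted in. Consequently, if $P\sim Q$ then
\[F_{\mathscr{P}[x\to P]}(m)=\sum_{n=1}^m g_x^{\mathscr{P}}(n,m)F_P(n)=\sum_{n=1}^m g_x^{\mathscr{P}}(n,m)F_Q(n)=F_{\mathscr{P}[x\to Q]}(m),\]
so the theorem already holds when exactly one point is substituted. The remaining work is to bootstrap this to simultaneous substitution at all $n$ points.

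First I would swap the substituted posets one at a time. For $0\le j\le n$ define the hybrid poset
\[R_j=\mathscr{P}[x_1\to P_1,\ldots,x_j\to P_j,\,x_{j+1}\to Q_{j+1},\ldots,x_n\to Q_n],\]
so that $R_0=\mathscr{P}[x_k\to Q_k]_{k=1}^n$ and $R_n=\mathscr{P}[x_k\to P_k]_{k=1}^n$. Since $\sim$ is transitive, it suffices to show $R_{j-1}\sim R_j$ for each $j$, and these two posets differ only in whether the point $x_j$ is expanded by $Q_j$ or by $P_j$.

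The key step is to recognize each adjacent pair as a single Ur-operation into a common base. Let $\mathscr{P}^{(j)}$ be the poset obtained from $\mathscr{P}$ by performing every substitution except the one at $x_j$, leaving $x_j$ in place as a single point $z$; that is,
\[\mathscr{P}^{(j)}=\mathscr{P}[x_1\to P_1,\ldots,x_{j-1}\to P_{j-1},\,x_j\to\{z\},\,x_{j+1}\to Q_{j+1},\ldots,x_n\to Q_n].\]
I would then verify the compatibility identities $R_{j-1}=\mathscr{P}^{(j)}[z\to Q_j]$ and $R_j=\mathscr{P}^{(j)}[z\to P_j]$. This is a direct unwinding of Definition~\ref{Ur-def}: for $p$ in the blob $P_j$ and $q$ in the blob arising from some $x_\ell$ with $\ell\ne j$, both the iterated operation and the single operation into $\mathscr{P}^{(j)}$ impose $p\le q$ exactly when $x_j\le x_\ell$ in $\mathscr{P}$, because the surviving point $z$ inherits precisely the relations of $x_j$; relations internal to a blob are untouched either way. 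With these identities in hand, Proposition~\ref{singleblowupformula} yields
\[F_{R_j}(m)=\sum_{n=1}^m g_z^{\mathscr{P}^{(j)}}(n,m)F_{P_j}(n),\qquad F_{R_{j-1}}(m)=\sum_{n=1}^m g_z^{\mathscr{P}^{(j)}}(n,m)F_{Q_j}(n),\]
and since $P_j\sim Q_j$ forces $F_{P_j}=F_{Q_j}$ while $g_z^{\mathscr{P}^{(j)}}$ is independent of the substituted poset, the two sums agree. Telescoping $R_0\sim R_1\sim\cdots\sim R_n$ then gives the theorem.

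The main obstacle is the compatibility identity for $\mathscr{P}^{(j)}$: one must check that factoring a simultaneous substitution through an intermediate poset reproduces exactly the same comparabilities, i.e.\ that the Ur-operation is ``associative'' with respect to pulling out a single coordinate. This is conceptually the crux of the argument, although once the relation bookkeeping dictated by Definition~\ref{Ur-def} is carried out it is routine; everything downstream is merely the linearity of the formula in Proposition~\ref{singleblowupformula} together with the transitivity of $\sim$.
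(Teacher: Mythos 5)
Your proof is correct, but it takes a genuinely different route from the paper. The paper does not telescope: it passes to the chain-basis coefficients $S_P(n)$ (the number of strict surjective order-preserving maps $P\to[n]$, which by Proposition~\ref{basisformula} determine and are determined by $F_P$) and establishes the product formula
\[S_{\mathscr P[x_k\to P_k]_{k=1}^n}(n)=\sum_{\mathscr A}\prod_{k=1}^nS_{P_k}(b_k-a_k+1),\]
where $\mathscr A$ ranges over ``nice'' collections of intervals $\{[a_k,b_k]\}$ covering $[n]$ with $b_j<a_k$ whenever $x_j<x_k$; since $P_k\sim Q_k$ forces $S_{P_k}=S_{Q_k}$, all substitutions are handled simultaneously in one identity. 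Your argument instead reduces everything to the single-substitution formula of Proposition~\ref{singleblowupformula} and chains hybrid posets $R_0\sim R_1\sim\cdots\sim R_n$ together, with the load-bearing step being the associativity of the Ur-operation (that $R_j=\mathscr{P}^{(j)}[z\to P_j]$), which you correctly identify and which does check out directly from Definition~\ref{Ur-def} since the relations between distinct blobs depend only on the underlying points of $\mathscr P$. Each approach has a non-trivial combinatorial identity at its core --- your factoring-out-one-coordinate identity versus the paper's bijection between strict surjective maps on the blown-up poset and nice interval covers paired with strict surjective maps on the blocks --- and they are of comparable difficulty. What the paper's route buys is an explicit formula for the chain-basis coefficients of an arbitrary Ur-operation, generalizing the Cauchy-product corollary for ordinal sums; what yours buys is modularity, since the theorem becomes a purely formal consequence of linearity of the substitution formula in $F_{P_j}$ once associativity is in hand. (The paper also sketches a second proof via the recurrence and full chain decomposition of each $P_i$; yours is a third, distinct argument.)
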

\begin{proof}
For a poset $P$, let $S_P(n)$ denote the number of strict surjective order preserving maps $f\colon P\to[n]$.
By Proposition~\ref{basisformula} it suffices to show that $S_{\mathscr P[x_k\to P_k]_{k=1}^n}=S_{\mathscr P[x_k\to Q_k]_{k=1}^n}$. We call an collection of intervals $\{[a_k,b_k]\}_{k=1}^n$ nice if they cover $[n]$ and if $b_j<a_k$ whenever $x_j<x_k$. Let $\mathscr A$ denote the set of nice collections of intervals.
Then
\[S_{\mathscr P[x_k\to P_k]_{k=1}^n}=\sum_{\mathscr A}\prod_{k=1}^nS_{P_k}(b_k-a_k+1)=\sum_{\mathscr A}\prod_{k=1}^nS_{Q_k}(b_k-a_k+1)=S_{\mathscr P[x_k\to Q_k]_{k=1}^n}.\]
where the middle equality uses the fact that $S_{P_k}=S_{Q_k}$ which follows from Proposition~\ref{basisformula} and the fact that a representation in the $\binom{x+m-1}{m}$ basis is unique.
\end{proof}
For ease of computation, note that we need only compute $S_{P_k}$ for nice intervals such that $h(P_k)=h(Q_k)\leq b_k-a_k+1\leq|P_k|=|Q_k|$. If any $b_k-a_k+1<h(P_k)=h(Q_k)$ or $b_k-a_k+1>|P_k|=|Q_k|$ then the corresponding term in the product will be 0.
\begin{example}
The posets in Figures \ref{Ur-ex}(c) and \ref{Ur-ex}(f) are doppelgangers by Theorem~\ref{blowuptheorem}. Due to the underlying non-series-parallel structure of $P$, this does not follow from Corollaries \ref{twoimplythird} or \ref{opluscommute}, nor does it follow from a single application of Johnson's recurrence.
\end{example}
We should mention that Theorem~\ref{blowuptheorem} can also be proved with the recurrence.
In particular, since the recurrence commutes with the Ur-operation we can perform a full chain decomposition on each $P_i$ and each $Q_i$ independently.
This will reduce the order polynomial of the $\mathscr P[x_k\to P_k]_{k=1}^n$ and $\mathscr P[x_k\to Q_k]_{k=1}^n$ to a sum of order polynomials of posets where each point of $\mathscr P$ is replaced by a chain.
Since each $P_i\sim Q_i$, these resulting posets will be isomorphic. This proof generalizes to labeled posets. More precisely, the Ur-operation generalizes to labeled posets and the recurrences for labeled posets allow this proof technique to generalize to labeled posets and the associated multivariate generating function.
\begin{figure}[htb]
\begin{center}
\begin{tabular}{c c c}
\begin{tikzpicture}[scale=0.5]
\node[draw, circle, inner sep=0pt, minimum size=5pt] at (0,0) (a) {y};
\node[draw, circle, inner sep=0pt, minimum size=5pt] at (0,2) (b) {};
\node[draw, circle, inner sep=0pt, minimum size=5pt] at (2,2) (c) {x};
\node[draw, circle, inner sep=0pt, minimum size=5pt] at (2,0) (d) {};
\draw (a) -- (b) -- (d) -- (c);
\end{tikzpicture}
&\begin{tikzpicture}[scale=0.5]
\node[draw, circle, inner sep=0pt, minimum size=5pt] at (1,0) (a) {};
\node[draw, circle, inner sep=0pt, minimum size=5pt] at (2,2) (b) {};
\node[draw, circle, inner sep=0pt, minimum size=5pt] at (0,2) (c) {};
\draw (b) -- (a) -- (c);
\end{tikzpicture}
&\begin{tikzpicture}[scale=0.5]
\node[draw, circle, inner sep=0pt, minimum size=5pt] at (0,0) (a) {};
\node[draw, circle, inner sep=0pt, minimum size=5pt] at (0,2) (b) {};
\node[draw, circle, inner sep=0pt, minimum size=5pt] at (2,2) (c) {};
\node[draw, circle, inner sep=0pt, minimum size=5pt] at (2,0) (d) {};
\node[draw, circle, inner sep=0pt, minimum size=5pt] at (3,4) (e) {};
\node[draw, circle, inner sep=0pt, minimum size=5pt] at (1,4) (f) {};
\node[draw, circle, inner sep=0pt, minimum size=5pt] at (-1,-2) (g) {};
\node[draw, circle, inner sep=0pt, minimum size=5pt] at (1,-2) (h) {};
\draw (h) -- (a) -- (g);
\draw (a) -- (b) -- (d) -- (c);
\draw (c) -- (e);
\draw (c) -- (f);
\end{tikzpicture}
\\(a) $P$
&(b) $Q$
&(c) $P[x \to Q, y \to Q^*]$\\
\begin{tikzpicture}[scale=0.5]
\node[draw, circle, inner sep=0pt, minimum size=5pt] at (0,0) (a) {y};
\node[draw, circle, inner sep=0pt, minimum size=5pt] at (0,2) (b) {};
\node[draw, circle, inner sep=0pt, minimum size=5pt] at (2,2) (c) {x};
\node[draw, circle, inner sep=0pt, minimum size=5pt] at (2,0) (d) {};
\draw (a) -- (b) -- (d) -- (c);
\end{tikzpicture}
&\begin{tikzpicture}[scale=0.5]
\node[draw, circle, inner sep=0pt, minimum size=5pt] at (0,0) (a) {};
\node[draw, circle, inner sep=0pt, minimum size=5pt] at (2,0) (b) {};
\node[draw, circle, inner sep=0pt, minimum size=5pt] at (1,2) (c) {};
\draw (a) -- (c) -- (b);
\end{tikzpicture}
&\begin{tikzpicture}[scale=0.5]
\node[draw, circle, inner sep=0pt, minimum size=5pt] at (1,0) (a) {};
\node[draw, circle, inner sep=0pt, minimum size=5pt] at (-1,0) (g) {};
\node[draw, circle, inner sep=0pt, minimum size=5pt] at (0,-2) (h) {};
\node[draw, circle, inner sep=0pt, minimum size=5pt] at (0,2) (b) {};
\node[draw, circle, inner sep=0pt, minimum size=5pt] at (2,4) (c) {};
\node[draw, circle, inner sep=0pt, minimum size=5pt] at (2,0) (d) {};
\node[draw, circle, inner sep=0pt, minimum size=5pt] at (3,2) (e) {};
\node[draw, circle, inner sep=0pt, minimum size=5pt] at (1,2) (f) {};
\draw (a) -- (h) -- (g) -- (b);
\draw (a) -- (b) -- (d);
\draw (d) -- (e) -- (c) -- (f) -- (d);
\end{tikzpicture}
\\(d) $P$
&(e) $Q^*$
&(f) $P[x \to Q^*,y \to Q]$
\end{tabular}
\end{center}
\caption{An example of dopplegangers that are indecomposable under $+$ and $\oplus$ due to the Ur-operation.}
\label{Ur-ex}
\end{figure}
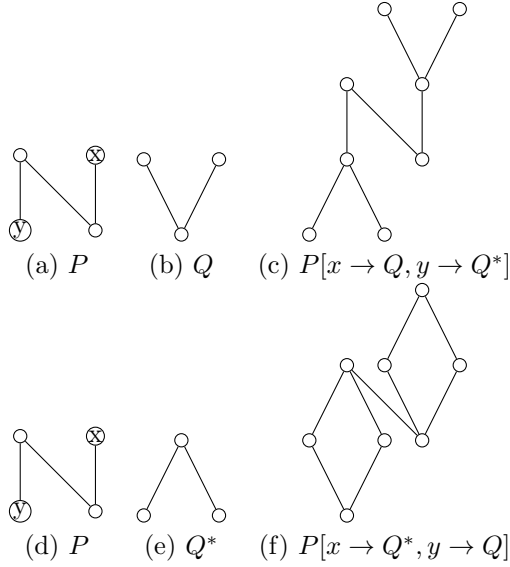
Theorem~\ref{blowuptheorem} allows us to build new doppelgangers out of an arbitrary poset by iteratively replacing points with corresponding doppelgangers. We know as well, however, that one can construct doppelgangers by replacing different points of some poset P with corresponding doppelgangers, such as in $C_k$ or $A_k$. It is natural then to ask about a generalization of this occurrence. For posets $P$ and $Q$ with $x\in P$ and $y\in Q$, when do we have $P[x \to R] \sim Q[y \to S]$ for all doppelgangers $R\sim S$?
\subsection{Ur-Equivalence}
\begin{definition}
We say $x \in P,\ y \in Q$ are Ur-equivalent when $P[x \to R] \sim Q[y \to S]$ for all posets $R\sim S$.
\end{definition}
In fact, Ur-equivalence relies on exactly the same structure the order polynomial does: on the values $g^P_x(n,m)$ and $g^Q_y(n,m)$.

\begin{proposition}
For $x \in P$ and $y \in Q$, $x$ and $y$ are Ur-equivalent if and only if $g^P_x = g^Q_y$
\end{proposition}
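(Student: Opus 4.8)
The plan is to prove both implications through the single-substitution formula of Proposition~\ref{singleblowupformula}, which records that $F_{P[x\to R]}(m)=\sum_{n=1}^m g^P_x(n,m)F_R(n)$ and, symmetrically, $F_{Q[y\to S]}(m)=\sum_{n=1}^m g^Q_y(n,m)F_S(n)$. Everything reduces to reading this identity in two ways.

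For the easy direction, assume $g^P_x=g^Q_y$ and take any $R\sim S$, so that $F_R=F_S$. Substituting into the two formulas and using the two equalities term by term gives
\[
F_{P[x\to R]}(m)=\sum_{n=1}^m g^P_x(n,m)F_R(n)=\sum_{n=1}^m g^Q_y(n,m)F_S(n)=F_{Q[y\to S]}(m)
\]
for every $m$, hence $P[x\to R]\sim Q[y\to S]$, which is exactly Ur-equivalence.

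For the converse, assume $x$ and $y$ are Ur-equivalent and specialize to $R=S$ (legitimate since $R\sim R$), so that $F_{P[x\to R]}=F_{Q[y\to R]}$ for every poset $R$. By the formula, for every $m$ and every poset $R$,
\[
\sum_{n=1}^m \bigl(g^P_x(n,m)-g^Q_y(n,m)\bigr)F_R(n)=0.
\]
Fix $m$ and set $h(n):=g^P_x(n,m)-g^Q_y(n,m)$; the goal is to force $h\equiv 0$ on $\{1,\dots,m\}$. The crux is that the vectors $\bigl(F_R(1),\dots,F_R(m)\bigr)$, as $R$ ranges over posets, span $\mathbb{R}^m$. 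Taking $R=C_k$ (the chain on $k$ elements) gives $F_{C_k}(n)=\binom{n+k-1}{k}$, a polynomial of degree $k$ in $n$; letting $k=0,1,\dots,m-1$ yields polynomials of degrees $0,1,\dots,m-1$, which form a basis for the space of polynomials of degree $<m$ and therefore, restricted to $n\in\{1,\dots,m\}$, span $\mathbb{R}^m$. Since $h$ is orthogonal to a spanning set, $h=0$, i.e. $g^P_x(n,m)=g^Q_y(n,m)$ for all $n$; as $m$ was arbitrary, $g^P_x=g^Q_y$.

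The main obstacle is precisely this linear-algebraic step in the converse: showing that order-polynomial evaluations supply enough independent test functions to separate the $g$-data. I would present it either through the chain basis as above or, equivalently, through antichains $A_k$ with $F_{A_k}(n)=n^k$, where the system $\sum_{n=1}^m n^k h(n)=0$ for $k=1,\dots,m$ has a Vandermonde coefficient matrix on the distinct nodes $1,\dots,m$ and is thus invertible. One should also confirm that $n$ genuinely ranges over $\{1,\dots,m\}$, matching the summation in Proposition~\ref{singleblowupformula}: every order-preserving map of $P[x\to\varnothing]$ admits at least one extension, since whenever $a<x<b$ the inherited relation $a<b$ already forces $f(a)\le f(b)$, leaving a nonempty interval of admissible values for $f(x)$.
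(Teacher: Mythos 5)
Your proof is correct and follows essentially the same route as the paper: the backward direction is read off from Proposition~4.6 (the single-substitution formula), and the forward direction extracts $g^P_x=g^Q_y$ from a linear system whose coefficient matrix of order-polynomial evaluations is invertible. The only difference is cosmetic --- the paper tests against arbitrary posets $S_1,\dots,S_m$ with $|S_i|=i$ and asserts invertibility from linear independence of the $F_{S_i}$, whereas you instantiate with chains (or antichains, via a Vandermonde matrix), which makes the invertibility step more explicit.
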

\begin{proof}
The backward direction is immediate from Proposition~\ref{singleblowupformula}. If $x \in P, y \in Q$ are Ur-equivalent then we have $\sum\limits_{i=1}^{m+1} g^P_x(i,m)\cdot F_R(i) = \sum\limits_{i=1}^{m+1} g^Q_x(i,m)\cdot F_R(i)$ for all posets $R$. For any $m$, consider applying this equation to any set of posets $S_1,\ldots,S_m$ such that $|P_i|=i$. Let $g^P_x(i,m) - g^Q_y(i,m) =  c(i,m)$. This gives the system of equations
\begin{align*}
\begin{bmatrix}
F_{S_1}(1) & \cdots & F_{S_1}(n) \\
\vdots & \ddots & \vdots \\
F_{S_n}(1) & \cdots & F_{S_n}(n)
\end{bmatrix}
\begin{bmatrix}
c(1,m)\\
\vdots \\
c(m,m)
\end{bmatrix}
=0
\end{align*}
This matrix is invertible due to the fact that the $F_{S_i}$ are linearly independent. Thus the $c(i,m)$ are 0, for $n \leq m$ $g^P_x(n,m)=g^Q_y(n,m)$, and by definition for $n > m$ both values are 0.
\end{proof}
\begin{corollary}
\label{urequivalenceresult}
For $x \in P$ and $y \in Q$ with $|P|=|Q|=n$, $x$ and $y$ are Ur-equivalent if and only if there exist posets $\{S_1,\cdots, S_n\}$ with $|S_i|=i$ such that $P[x\to S_i]\sim Q[y\to S_i], \ \forall \ i \in [n]$
\end{corollary}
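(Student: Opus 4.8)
The plan is to reduce everything to the preceding Proposition, which identifies Ur-equivalence of $x$ and $y$ with the single equality $g^P_x=g^Q_y$ of their extension-count functions; so it suffices to show that the finite family $\{S_i\}_{i=1}^n$ already forces this equality. The forward direction is immediate: if $x$ and $y$ are Ur-equivalent, then taking $R=S=C_i$ in the definition of Ur-equivalence (where trivially $C_i\sim C_i$) gives $P[x\to C_i]\sim Q[y\to C_i]$, so one may take $S_i=C_i$. For the backward direction I would write $c(k,m):=g^P_x(k,m)-g^Q_y(k,m)$, extended by $0$ outside $1\le k\le m$, and apply Proposition~\ref{singleblowupformula} to each hypothesis $P[x\to S_i]\sim Q[y\to S_i]$. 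This turns each hypothesis into the scalar identity
\[\sum_{k=1}^{m} c(k,m)\,F_{S_i}(k)=0,\qquad\text{valid for every }m\ge1\text{ and every }i\in[n].\]

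The first real step is to exploit that $|S_i|=i$. By Proposition~\ref{basisformula} the polynomial $F_{S_i}$ has degree exactly $i$ with $F_{S_i}(0)=0$, so $F_{S_1},\dots,F_{S_n}$ (like $F_{C_1},\dots,F_{C_n}$) form a basis of the $n$-dimensional space of polynomials of degree at most $n$ vanishing at $0$. Hence the relations above are equivalent to the vanishing of the first $n$ power-sum moments $\sum_{k=1}^m c(k,m)\,k^{\,j}=0$ for $j=1,\dots,n$ and all $m$. For $m\le n$ this is a column-scaled Vandermonde system in the nodes $1,\dots,m$, which is invertible, and therefore $c(k,m)=0$ for all $1\le k\le m\le n$. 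Equivalently, $g^P_x=g^Q_y$ on the triangle $m\le n$, and re-expanding in the chain basis, $F_{P[x\to C_j]}=F_{Q[y\to C_j]}$ as polynomials in $m$ for every $j\le n$.

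The hard part is upgrading this to all $m>n$, where the $n$ moment conditions no longer pin down the $m$ unknowns $c(1,m),\dots,c(m,m)$; this is where the a priori structure of $g^P_x$ must enter, since the equality so far is genuine information only on a bounded region. The key fact I would establish is that $g^P_x(k,m)$ is not an arbitrary array but a slice of an order-polytope lattice count: once one fixes which elements of $P$ lie below, above, and incomparable to $x$, a stars-and-bars (Ehrhart-type) argument shows that $g^P_x(k,m)$ is piecewise polynomial in $(k,m)$ of degree $<n=|P|$, with only boundedly many pieces whose walls are controlled by the comparabilities of $x$. The antichain already illustrates the necessity of treating boundary faces separately: there $g^{A_n}_x(\cdot,m)$ is supported on the single value $k=m$, at which it equals $m^{\,n-1}$. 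The main obstacle is thus a structural lemma: two such bounded-complexity functions whose first $n$ moments agree for every $m$ must coincide. I would attack it by stratifying $g$ according to the tight constraints on $x$ — an inclusion–exclusion over the down-set and up-set of $x$ — so that on each stratum the claim reduces to the vanishing of a single polynomial of degree $<n$, which I would pin down from the agreement already known on $m\le n$ together with the additional linear constraints obtained at negative arguments from the strict order polynomial via poset reciprocity (Theorem~\ref{reciprocity}). Once $g^P_x=g^Q_y$ is established in full, the preceding Proposition yields Ur-equivalence and closes the corollary.
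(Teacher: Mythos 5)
Your forward direction and the first half of your backward direction are correct: reducing the hypotheses to the vanishing of the first $n$ moments of $c(\cdot,m)$ via the triangularity of $\deg F_{S_i}=i$ together with $F_{S_i}(0)=0$, killing $c(k,m)$ on the triangle $m\le n$ by invertibility of the resulting Vandermonde-type system, and extracting $F_{P[x\to C_j]}=F_{Q[y\to C_j]}$ as polynomial identities for $j\le n$ are all sound. (The paper itself offers no proof of this corollary, presenting it as immediate from the preceding proposition; you are right that it is not immediate, since for $m>n$ the $n$ moment conditions underdetermine the $m$ unknowns $c(1,m),\dots,c(m,m)$.)

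The gap is that your last paragraph never closes. The ``structural lemma'' you propose --- that $g^P_x$ is a piecewise polynomial of bounded complexity and that two such arrays whose first $n$ moments agree for every $m$ must coincide --- is neither proved nor obviously true as stated, and the stratification and inclusion--exclusion you describe are only a plan. You do not need any of it. Once $c(k,m)=0$ for all $m\le n$, pass to chains of length $j>n$: the difference $D_j(m):=F_{P[x\to C_j]}(m)-F_{Q[y\to C_j]}(m)$ is a polynomial of degree at most $n-1+j$; it vanishes at $m=1,\dots,n$ because $c(\cdot,m)\equiv 0$ there, and since both posets contain $C_j$ as a chain their heights are at least $j$, so by Corollary~\ref{rootresult} both order polynomials vanish at $m=0,-1,\dots,-j+1$. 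That gives $n+j>n-1+j$ distinct roots, so $D_j\equiv 0$ for every $j\ge 1$. Expanding an arbitrary $F_R$ in the chain basis of Proposition~\ref{basisformula} and using Proposition~\ref{singleblowupformula} then gives $F_{P[x\to R]}=\sum_j b_jF_{P[x\to C_j]}=\sum_j b_jF_{Q[y\to C_j]}=F_{Q[y\to R]}$, and combining with Theorem~\ref{blowuptheorem} yields Ur-equivalence (whence $g^P_x=g^Q_y$ a posteriori, by the preceding proposition). This is exactly the negative-argument input you hinted at, but applied to the honest polynomials $F_{P[x\to C_j]}$ rather than to the piecewise-defined object $g^P_x$.
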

Unfortunately, while $g_x^P$ reveals the structure behind Ur-equivalence, in general it is too difficult to calculate to be of practical use. However, one may note that $g_x^P$ is totally determined by the structure of $P[x \to \varnothing]$ and its relation to $P$. Let $A_k$ be the anti-chain of size $k$--a set with no order relations. The structure of $g_x^P$ suggests that we may be able to strengthen the Corollary \ref{urequivalenceresult}:
\begin{conjecture}
\label{urequivalenceconjecture}
For $x \in P$ and $y \in Q$, $x$ and $y$ are Ur-equivalent if and only if $P \sim Q$ and $P[x\to \varnothing]\sim Q[y\to \varnothing]$.
\end{conjecture}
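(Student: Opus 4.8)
The plan is to translate everything into the language of the statistic $g^P_x$ and to work with the power moments of $n\mapsto g^P_x(n,m)$. Since the antichain $A_k$ on $k$ points has $F_{A_k}(n)=n^k$ (with $A_0=\varnothing$), Proposition~\ref{singleblowupformula} gives, for each $k\ge 0$,
\[
F_{P[x\to A_k]}(m)=\sum_{n=1}^m g^P_x(n,m)\,n^k,
\]
so that $F_{P[x\to A_k]}(m)$ is exactly the $k$-th power moment of the finitely supported sequence $n\mapsto g^P_x(n,m)$. Here $k=1$ recovers $F_P$ (take $A_1=C_1$) and $k=0$ recovers $F_{P[x\to\varnothing]}=F_{P\setminus x}$ (directly from the definition of $g$, summing over all $n$). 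The forward direction is then immediate: if $x,y$ are Ur-equivalent, then by the proposition characterizing Ur-equivalence as the identity $g^P_x=g^Q_y$, the $0$-th and $1$-st moment functions agree for every $m$, i.e.\ $F_{P\setminus x}=F_{Q\setminus y}$ and $F_P=F_Q$, which are precisely the two asserted conditions.

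For the converse I would reduce to a purely moment-theoretic statement. Because $g^P_x(\cdot,m)$ is supported on $1\le n\le m$, the moments of orders $0,\dots,m-1$ determine it (Vandermonde), so $g^P_x=g^Q_y$ is \emph{equivalent} to the agreement of every moment function $m\mapsto F_{P[x\to A_k]}(m)$. Thus, invoking the $g$-characterization of Ur-equivalence and Corollary~\ref{urequivalenceresult}, the conjecture is exactly the assertion that matching the $k=0$ and $k=1$ moment functions (over all $m$) forces the $k$-th moment functions to match for all $k$. At a single fixed level $m$ there are up to $m$ unknowns $g^P_x(1,m),\dots,g^P_x(m,m)$ but only two equations; for $m\le 2$ this already pins them down, so the entire content lives at levels $m\ge 3$, where one must import information from smaller levels.

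The engine I would build is an induction on $m$ driven by a recurrence for $g^P_x$. Writing $R=P\setminus x$ with down-set $D$ and up-set $U$ of $x$, I would decompose order-preserving maps $f\colon R\to[m]$ by their top fibre $f^{-1}(m)$ (an up-set of $R$) and track how the admissibility window $[L_f,U_f]$, with $L_f=\max_{z\in D}f(z)$ and $U_f=\min_{w\in U}f(w)$, degrades when the top level is removed; this should yield a relation expressing $g^P_x(\cdot,m)$ in terms of already-matched level-$(m-1)$ data plus a bounded correction. One very promising lever is \emph{nonnegativity together with the support bound} $n\le m$: for instance, when $x$ is incomparable to all of $R$ one has $g^P_x(n,m)=F_R(m)\,\delta_{n,m}$, and the first-moment identity then reads $\sum_n n\,g^Q_y(n,m)=m\sum_n g^Q_y(n,m)$, which by $n\le m$ and $g^Q_y\ge 0$ forces all mass to $n=m$ and hence $g^P_x=g^Q_y$ outright. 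The aim is to combine such positivity/majorization arguments with the $m$-recurrence so that, at each level, the two given moment equations together with the inductively matched lower levels determine $g^P_x(\cdot,m)$ completely.

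The hard part will be the recurrence itself. Two moment functionals are nowhere near enough to determine a distribution in general, so the proof must extract all of its strength from the joint $(n,m)$ structure of $g^P_x$, and this requires a recurrence whose \emph{shape} is uniform across posets and which meshes with exactly the order-$0$ and order-$1$ functionals. The genuine obstacle is that $L_f$ (controlled by $D$) and $U_f$ (controlled by $U$) are coupled through the remaining elements of $R$, so peeling off the top level does not cleanly factor the window statistic $U_f-L_f+1$; controlling this coupling — or else replacing the induction by a direct majorization/positivity argument valid for arbitrary $D,U$ — is precisely the gap that keeps the statement at the level of a conjecture.
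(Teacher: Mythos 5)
The statement you are trying to prove is stated in the paper as a \emph{conjecture}: the paper offers no proof of it, only a partial reduction (Proposition~\ref{urequivalenceinduction}), so there is no complete argument to measure yours against, and indeed your own writeup concedes at the end that the converse direction is not closed. What you do establish is correct but is only the easy half plus a reformulation. Your forward direction is fine: taking $R=S=C_1$ and $R=S=\varnothing$ in the definition of Ur-equivalence (equivalently, reading off the $k=1$ and $k=0$ moments of $g^P_x(\cdot,m)$ via Proposition~\ref{singleblowupformula} and $F_{A_k}(n)=n^k$) gives $P\sim Q$ and $P[x\to\varnothing]\sim Q[y\to\varnothing]$. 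Your moment-theoretic restatement of the converse --- that the conjecture is exactly the claim that the order-$0$ and order-$1$ moment functions determine all higher moment functions of $g^P_x(\cdot,m)$ --- is also correct and is essentially the same observation the paper makes, since by Corollary~\ref{urequivalenceresult} it suffices to match $P[x\to A_k]$ with $Q[y\to A_k]$ for all $k$.

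The genuine gap is the engine you propose for the converse, which is never constructed. You miss the one nontrivial reduction the paper does prove: in Proposition~\ref{urequivalenceinduction} the authors show that it suffices to verify the single implication ``moments $0$ and $1$ match $\Rightarrow$ moment $2$ matches'' universally over all posets, because one can then bootstrap by applying that implication to the substituted posets $P'=P[x\to A_k]$ and $Q'=Q[y\to A_k]$ themselves (using $P'[x\to A_2]=P[x\to A_{k+1}]$) to climb from $k$ to $k+1$. Your plan instead inducts on $m$ via an unstated recurrence for $g^P_x(\cdot,m)$ obtained by peeling off the top fibre; as you yourself note, the coupling between the down-set and up-set of $x$ prevents this from factoring cleanly, and your positivity argument only closes the case where $x$ is incomparable to everything. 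So the proposal correctly identifies the obstruction but does not overcome it; neither does the paper, which is why the statement remains a conjecture there. If you want to make progress, the paper's bootstrap suggests the sharper target: prove that for $P\sim Q$ with $P[x\to\varnothing]\sim Q[y\to\varnothing]$ one always has $P[x\to A_2]\sim Q[y\to A_2]$, i.e.\ that the second moment $\sum_n n^2 g^P_x(n,m)$ is determined by the zeroth and first together with the poset structure.
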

While this conjecture may seem unlikely with the above information alone, like the order polynomial, $g_x^P$ has significant extra structure that is not understood. In fact, the conjecture holds for small posets, and further is equivalent to a number of simpler statements. For instance,
\begin{proposition}
\label{urequivalenceinduction}
If for all posets $P,Q$ where $P \sim Q$, and for some $x\in P$ and $y\in Q$, $P[x\to \varnothing]\sim Q[y\to \varnothing]$, $P[x\to A_2]\sim Q[y\to A_2]$, then Conjecture~\ref{urequivalenceconjecture} holds. 
\end{proposition}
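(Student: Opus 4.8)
The plan is to reduce Conjecture~\ref{urequivalenceconjecture} to the single identity $g^P_x=g^Q_y$ and then obtain that identity from an induction that invokes the proposition's hypothesis only as a black box. Recall that $x$ and $y$ are Ur-equivalent if and only if $g^P_x=g^Q_y$, so it suffices to show that the two stated conditions are together equivalent to $g^P_x=g^Q_y$. One direction is immediate: specializing Proposition~\ref{singleblowupformula} to $R=C_1$ and to $R=\varnothing$ gives $F_P(m)=\sum_n g^P_x(n,m)\,n$ and $F_{P[x\to\varnothing]}(m)=\sum_n g^P_x(n,m)$, so $g^P_x=g^Q_y$ forces both $P\sim Q$ and $P[x\to\varnothing]\sim Q[y\to\varnothing]$. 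The substance is the converse. Since each of the $k$ incomparable points of $A_k$ maps freely, $F_{A_k}(n)=n^k$, and Proposition~\ref{singleblowupformula} gives $F_{P[x\to A_k]}(m)=\sum_n g^P_x(n,m)\,n^k$. Hence if I can establish $P[x\to A_k]\sim Q[y\to A_k]$ for every $k\ge 0$, then for each fixed $m$ the differences $c(n,m)=g^P_x(n,m)-g^Q_y(n,m)$ satisfy $\sum_{n=1}^m c(n,m)\,n^k=0$ for all $k$, and the Vandermonde system on the distinct nodes $1,\dots,m$ forces $c(\cdot,m)=0$.

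To produce $P[x\to A_k]\sim Q[y\to A_k]$ for all $k$, I would induct on $k$, proving the universally quantified statement $(\ast_k)$: for every pair $(P',x'),(Q',y')$ with $P'\sim Q'$ and $P'[x'\to\varnothing]\sim Q'[y'\to\varnothing]$ one has $P'[x'\to A_k]\sim Q'[y'\to A_k]$. The base cases are free: $k=0$ is the condition $P'[x'\to\varnothing]\sim Q'[y'\to\varnothing]$ itself (as $A_0=\varnothing$), $k=1$ is $P'\sim Q'$ (as $A_1=C_1$ and $P'[x'\to C_1]=P'$), and $k=2$ is exactly the hypothesis of this proposition.

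The inductive step is the heart of the argument. I would use the factorization $A_k=A_2[a_1\to A_{k-1}]$ — replacing one point $a_1$ of the antichain $A_2$ by $A_{k-1}$ and leaving the other point as $C_1$ yields $A_{k-1}+C_1=A_k$ — together with associativity of the Ur-operation to write $P'[x'\to A_k]=(P'[x'\to A_2])[a_1\to A_{k-1}]$. Applying $(\ast_{k-1})$ to the refined pair $(P'[x'\to A_2],a_1),(Q'[y'\to A_2],b_1)$ then yields the claim, provided its two hypotheses hold. The first, $P'[x'\to A_2]\sim Q'[y'\to A_2]$, is precisely $(\ast_2)$. The second collapses to something already known: erasing the refined point undoes the substitution, since $(P'[x'\to A_2])[a_1\to\varnothing]=P'[x'\to C_1]=P'$, so the required condition $(P'[x'\to A_2])[a_1\to\varnothing]\sim(Q'[y'\to A_2])[b_1\to\varnothing]$ is just $P'\sim Q'$. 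I expect this collapse — recognizing that the deletion hypothesis propagates through the refinement because erasing $a_1$ returns the original poset — to be the main obstacle, both in stating the Ur-operation associativity cleanly and in verifying that the two conditions defining a valid pair are preserved.

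The remaining steps are routine. The Vandermonde inversion is standard once all moments agree, and the only mild point to confirm is that the index range $n\ge 1$ in Proposition~\ref{singleblowupformula} is complete: every order-preserving map on $P[x\to\varnothing]$ extends to $P$, because transitivity through $x$ already forces each element below $x$ to receive a value at most that of each element above $x$, so $g^P_x(0,m)=0$ and $F_{P[x\to A_0]}=F_{P[x\to\varnothing]}$ as needed. Combining the induction with the Vandermonde argument gives $g^P_x=g^Q_y$, hence Ur-equivalence, and together with the easy converse this establishes Conjecture~\ref{urequivalenceconjecture}.
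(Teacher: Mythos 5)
Your proposal is correct and follows essentially the same route as the paper: both induct on $k$ to establish $P[x\to A_k]\sim Q[y\to A_k]$ for all $k$ by applying the hypothesis (your $(\ast_2)$) as a black box to the blown-up pair, and both then conclude Ur-equivalence from agreement on all antichains. The only differences are organizational --- you quantify the inductive statement over all pairs and inline the Vandermonde/linear-independence argument, where the paper runs the induction on the fixed pair and cites Corollary~\ref{urequivalenceresult} --- and you additionally spell out the trivial forward direction, which the paper leaves implicit.
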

\begin{proof}
We will prove that $P[x\to A_k]\sim Q[y\to A_k]$ for $\forall \ k \in \mathbb{N}$ by induction on $k$. The RHS of Conjecture~\ref{urequivalenceconjecture} is exactly the base case of our induction. Assume $P[x\to A_i]\sim Q[y\to A_i]$ for $0 \leq i \leq k$, and let $P'=P[x \to A_k] \sim Q' = Q[y \to A_k]$. $P'[x \to \varnothing] = P[x \to A_{k-1}] \sim Q[y \to A_{k-1}] = Q'[x \to \varnothing]$. Then our initial assumption gives that $P'[x \to A_2] = P'[x \to A_{k+1}] \sim Q'[y \to A_2] = Q[y \to A_{k+1}]$. This concludes our induction and the result then follows from Corollary~\ref{urequivalenceresult}.
\end{proof}
While we do not know whether the assumption in Proposition~\ref{urequivalenceinduction} is true for all posets $P,Q$, we do know it is true for certain families of posets, such as $C_k$ or $A_k$, and it gives a simpler formulation of Conjecture~\ref{urequivalenceconjecture}.
\section{Posets of Bounded Height}
Due to the computational complexity of the order polynomial, a general classification of doppelgangers seems hopeless. However, there are certain large families for which the order polynomial is computable in polynomial time. For instance, Faigle and Schrader showed that the order polynomial of $P \in \mathscr W_k$, the set $\{P \in \mathscr P_n \ | \ w(P) \leq k\}$ may be computed in $O(n^{2k+1})$. While this set does not have a rigid enough structure to permit classification, a special subset does. Consider $\mathscr H_k$ $\subset \mathscr W_k$, the set $\{P \in \mathscr P_n \ | \ 
= n-k\}$. We will leverage invariants on doppelgangers and the rigid structure of $\mathscr H_k$ to prove that one may reduce classification of doppelgangers to a number of diophantine equations in time dependent on k. In addition, we show that for constant k the order polynomial of posets in this class has time complexity $O(n)$, and is computable in polynomial time for $k=O(log(n)/log(log(n)))$.
\subsection{Invariants}
Proposition \ref{basisformula} introduces an important restriction on the roots of the order polynomial, first shown by Stanley \cite{Order}. Recall that the height of $P$, $h(P)$, is the cardinality of the largest total ordering contained in $P$.
\begin{corollary}
\label{rootresult}
For all posets $P$, $F_P(x)$ vanishes at $x=0,-1,\ldots,-h(P)+1$ but not at $-h(P),-h(P)-1,\ldots$.
\end{corollary}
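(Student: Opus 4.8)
The plan is to read both assertions off the chain-basis expansion of Proposition~\ref{basisformula},
\[F_P(x)=(-1)^{|P|}\sum_{k=h(P)}^{|P|}(-1)^kc_k\binom{x+k-1}{k},\qquad c_k\in\mathbb{N},\]
together with the explicit factorization $\binom{x+k-1}{k}=\frac{x(x+1)\cdots(x+k-1)}{k!}$, which shows that the $k$-th basis polynomial vanishes precisely at $x=0,-1,\ldots,-(k-1)$. For the vanishing half of the statement I would simply note that every index in the sum satisfies $k\geq h(P)$, so each summand already carries all of the roots $0,-1,\ldots,-(h(P)-1)$; hence $F_P$ inherits these roots, which is exactly vanishing at $x=0,-1,\ldots,-h(P)+1$.

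For the non-vanishing half I would evaluate $F_P$ at $x=-j$ for an arbitrary integer $j\geq h(P)$ and invoke the binomial reciprocity theorem used earlier. The key computation is that $\binom{-j+k-1}{k}=(-1)^k\binom{j}{k}$ whenever $k\leq j$, while for $k>j$ the argument $k-1-j$ is a nonnegative integer strictly below $k$, forcing $\binom{k-1-j}{k}=0$. After substitution the two factors of $(-1)^k$ cancel and the high terms drop out, leaving
\[F_P(-j)=(-1)^{|P|}\sum_{k=h(P)}^{\min(j,|P|)}c_k\binom{j}{k},\]
a signed sum of manifestly nonnegative terms with no remaining cancellation.

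To conclude that this sum is nonzero it remains to show its lead term is strictly positive, i.e. $c_{h(P)}>0$: there is at least one surjective strict order-preserving map $P\to[h(P)]$. I would exhibit one by the height function that sends each $z\in P$ to the length of the longest chain in $P$ with top element $z$. This map is strict order-preserving and takes values in $[h(P)]$, and along any chain of maximal length $h(P)$ the height increases by exactly one at each step, so every value in $[h(P)]$ is attained; thus $c_{h(P)}\geq 1$. Since $h(P)\leq j$ gives $\binom{j}{h(P)}>0$, the whole sum is bounded below by $c_{h(P)}\binom{j}{h(P)}>0$, so $F_P(-j)\neq 0$ (and, as a bonus, has sign $(-1)^{|P|}$).

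The only genuinely substantive point is this last step: guaranteeing $c_{h(P)}>0$ so that the nonnegative sum cannot be zero, paired with the sign bookkeeping that makes reciprocity collapse the alternating expansion into a cancellation-free sum. Everything else is a direct reading of the factorization of $\binom{x+k-1}{k}$ and of Proposition~\ref{basisformula}.
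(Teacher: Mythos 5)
Your argument is correct and follows exactly the route the paper intends: Corollary~\ref{rootresult} is stated as an immediate consequence of the chain-basis expansion in Proposition~\ref{basisformula}, with the vanishing read off the factorization of $\binom{x+k-1}{k}$ and the non-vanishing from reciprocity collapsing the expansion into a nonnegative sum. You supply the one detail the paper leaves implicit, namely that $c_{h(P)}>0$ via the height function, which is a worthwhile addition rather than a deviation.
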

In particular, doppelganger posets have the same height.
Such invariants that can be easily calculated allow for classification of doppelgangers of certain families of posets. Lemma~\ref{invariants} presents four such invariants that have simple recursive formulas over the operations of disjoint union and ordinal sum. One of these invariants is $e(P)$, the number of linear extensions of $P$. A \textit{linear extension} of a poset $P$ is an order preserving bijection $P\to[|P|]$.
\begin{lemma}
\label{invariants}
If $P\sim Q$ then $|P|=|Q|$, $F_P(2)=F_Q(2)$, $h(P)=h(Q)$, $e(P)=e(Q)$.
Additionally,
\begin{align}
|P+Q|&=|P|+|Q|\\
|P\oplus Q|&=|P|+|Q|\\
F_{P+Q}(2)&=F_P(2)F_Q(2)\\
F_{P\oplus Q}(2)&=F_P(2)+F_Q(2)-1\\
h(P+Q)&=\max(h(P),h(Q))\\
h(P\oplus Q)&=h(P)+h(Q)\\
e(P+Q)&=\binom{|P|+|Q|}{|P|}e(P)e(Q)\\
e(P\oplus Q)&=e(P)e(Q)
\end{align}
for all posets $P,Q$.
\end{lemma}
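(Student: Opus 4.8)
The plan is to dispatch every claim by a direct combinatorial argument, since each quantity has a transparent counting interpretation. I would begin with the four invariance assertions of the first sentence. Equality $|P|=|Q|$ is immediate from Proposition~2.1, which identifies $|P|$ with the degree of $F_P$; and $F_P(2)=F_Q(2)$ is simply the identity $F_P=F_Q$ evaluated at $m=2$. For $h(P)=h(Q)$ I would invoke Corollary~\ref{rootresult}: the height equals the number of consecutive arguments $0,-1,\ldots,-h(P)+1$ at which $F_P$ vanishes, so equal order polynomials have equal height. Finally, writing $F_P(m)=\sum_k a_k\binom{m}{k}$, the top coefficient $a_{|P|}$ counts surjective order-preserving maps $P\to[|P|]$, that is $a_{|P|}=e(P)$; comparing this coefficient across $F_P=F_Q$ (using $|P|=|Q|$) yields $e(P)=e(Q)$.

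For the eight recurrences I would handle the operations $+$ and $\oplus$ in parallel. The two cardinality identities are immediate, since both operations take the disjoint union of the underlying sets. The identity $F_{P+Q}(2)=F_P(2)F_Q(2)$ is the case $m=2$ of the multiplicativity $F_{P+Q}=F_PF_Q$ established earlier. For the ordinal-sum identity $F_{P\oplus Q}(2)=F_P(2)+F_Q(2)-1$, the key is that $F_R(2)$ counts the down-sets (order ideals) of $R$, via $f\mapsto f^{-1}(1)$. A down-set of $P\oplus Q$ either avoids $Q$, in which case it is an arbitrary down-set of $P$ (contributing $F_P(2)$), or it contains some element of $Q$, in which case it must contain all of $P$ and restrict to a nonempty down-set of $Q$ (contributing $F_Q(2)-1$); summing gives the claim.

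The height recurrences follow from examining maximal chains. Any chain in $P+Q$ lies wholly in $P$ or wholly in $Q$, as cross elements are incomparable, so the longest has length $\max(h(P),h(Q))$. A chain in $P\oplus Q$ decomposes as a chain of $P$ followed by a chain of $Q$, chosen independently and concatenated, giving $h(P)+h(Q)$. The linear-extension recurrences are counted in the same spirit. Every linear extension of $P\oplus Q$ lists all of $P$ before all of $Q$ and restricts to a linear extension of each block, so $e(P\oplus Q)=e(P)e(Q)$. A linear extension of $P+Q$ is determined by a choice of which $|P|$ of the $|P|+|Q|$ positions hold $P$-elements together with independent linear extensions of $P$ and $Q$ on the chosen subsequences, giving $\binom{|P|+|Q|}{|P|}e(P)e(Q)$.

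I do not expect any step to present a serious obstacle; the arguments are elementary once the correct counting model is fixed. The two points demanding genuine care are precisely those where $+$ and $\oplus$ break symmetry: the off-by-one in $F_{P\oplus Q}(2)$, where the down-sets that contain all of $P$ must be counted exactly once so as not to double-count the map sending all of $Q$ to the larger value; and the binomial factor in $e(P+Q)$, which reflects that disjoint union allows arbitrary interleavings of the two extensions whereas ordinal sum forbids them. The invariance of $e(P)$ is the one place where the combinatorics must be tied explicitly back to the order polynomial, through its identification with the leading binomial-basis coefficient.
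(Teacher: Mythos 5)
Your proposal is correct and follows essentially the same route as the paper: each of the four invariants is exhibited as a function of $F_P$ alone (degree for $|P|$, evaluation for $F_P(2)$, the root structure of Corollary~\ref{rootresult} for $h(P)$, and the leading coefficient for $e(P)$), and the eight recursions are handled by the elementary combinatorial arguments the paper leaves to the reader. Your only departures are cosmetic --- reading $h(P)$ off the roots rather than off the first nonzero chain-basis coefficient, and $e(P)$ off the $\binom{m}{k}$ basis rather than the $\binom{m+k-1}{k}$ basis --- and your down-set and interleaving counts for the $F(2)$ and $e$ recursions are exactly the intended arguments.
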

\begin{proof}
By Proposition~\ref{basisformula}, $|P|=\deg F_P$, $h(P)$ is the index of the first nonzero term of $F_P$ in the $\binom{x+k-1}{k}$ basis, and $e(P)$ is $(\deg F_P)!$ times the leading coefficient of $F_P$ (since $c_{|P|}=e(P)$ in the notation of Proposition~\ref{basisformula}).
Then all four invariants depend only on $F_P$ which shows the first part of the lemma.
The recursive formulas follow from elementary combinatorial arguments.
\end{proof}
Corollary~\ref{rootresult} states that $F_P$ has roots at $0,-1,\ldots,-h(P)+1$.
This allows for the following necessary and sufficient condition for two posets to be doppelgangers.
\begin{proposition}
\label{prop:classification}
$P\sim Q$ if and only if $|P|=|Q|$, $h(P)=h(Q)$, $e(P)=e(Q)$, and $F_P$ and $F_Q$ agree at $|P|-h(P)-1$ distinct points (not counting the trivial agreement at $1,0,-1,\ldots,-h(P)$).
\end{proposition}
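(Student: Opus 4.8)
The plan is to prove both directions, with necessity being essentially immediate and sufficiency reducing to a degrees-of-freedom count on the difference polynomial $D := F_P - F_Q$. For necessity, suppose $P \sim Q$, so that $F_P \equiv F_Q$. Then Lemma~\ref{invariants} directly yields $|P|=|Q|$, $h(P)=h(Q)$, and $e(P)=e(Q)$, and two identical polynomials agree at every point, in particular at any prescribed set of $|P|-h(P)-1$ points. So this direction needs nothing beyond invoking the invariants already established.

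The substance is sufficiency. Write $n=|P|=|Q|$ and $h=h(P)=h(Q)$. By Proposition~\ref{basisformula} both $F_P$ and $F_Q$ have degree exactly $n$, so $\deg D \le n$. The first step is to spend the hypothesis $e(P)=e(Q)$: since $e(P)$ equals $n!$ times the leading coefficient of $F_P$ (as recorded in the proof of Lemma~\ref{invariants}), the leading terms of $F_P$ and $F_Q$ cancel, and hence $\deg D \le n-1$. I would then collect the vanishing of $D$ that is forced for free by the remaining hypotheses: Corollary~\ref{rootresult} gives that both $F_P$ and $F_Q$ vanish at $x=0,-1,\dots,-h(P)+1$, contributing $h$ common roots of $D$, while $F_P(1)=F_Q(1)=1$ (the only order-preserving map into $[1]$ is constant) contributes one further root at $x=1$. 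These $h+1$ evaluations are exactly the ``trivial agreements'' of the statement.

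Finally I would combine these with the hypothesis that $F_P$ and $F_Q$ agree at $n-h-1$ additional distinct points, chosen distinct from $1,0,-1,\dots,-h(P)+1$. Then $D$ is a polynomial of degree at most $n-1$ vanishing at $(h+1)+(n-h-1)=n$ distinct points, which forces $D\equiv 0$, i.e. $F_P=F_Q$ and $P\sim Q$. The only points to verify with care are that all $n$ roots are genuinely distinct (the extra points are required distinct from the trivial ones, and $x=1$ lies outside the root block $\{0,-1,\dots,-h(P)+1\}$ since $h\ge 1$), and the degenerate range $h\ge n-1$, where $n-h-1\le 0$ and the conclusion already follows from the $h+1$ trivial agreements together with $e(P)=e(Q)$ alone.

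I do not anticipate a genuine obstacle: the proposition is at heart a bookkeeping statement about how much of $F_P$ is pinned down for free. The one subtlety worth flagging is the precise accounting. The condition $e(P)=e(Q)$ should be read not as a finite evaluation but as fixing the top coefficient (an ``evaluation at infinity''), and it is exactly this degree reduction to $\deg D\le n-1$ that lets the count close at $n$ finite agreements rather than $n+1$. I would therefore make sure the exposition aligns the number of listed trivial agreements ($1$ together with the $h$ roots) with the degree bound on $D$, so that the stated quota of $|P|-h(P)-1$ further points emerges exactly and no condition is silently double-counted.
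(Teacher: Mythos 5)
Your proposal is correct and follows essentially the same route as the paper: use $e(P)=e(Q)$ to match leading coefficients so that $F_P-F_Q$ has degree at most $|P|-1$, then count the $h(P)+1$ trivial agreements at $x=1,0,-1,\ldots,-h(P)+1$ together with the $|P|-h(P)-1$ additional points to force the difference to vanish identically. Your explicit justification of the agreement at $x=1$ via $F_P(1)=1$ and your remark on the degenerate range are welcome refinements of the same argument.
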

\begin{proof}
The first statement of Lemma~\ref{invariants} shows the forward direction.
For the converse, note that $F_P$ and $F_Q$ have the same leading coefficient of $e(P)/|P|!$.
Then subtracting $F_P$ and $F_Q$ results in a polynomial of degree at most $|P|-1$ which vanishes at $|P|$ points (namely, at $x=1,0,-1,\ldots,-h(P)+1$ and the $|P|-h(P)-1$ other points).
Then $F_P-F_Q$ is identically zero and $P\sim Q$.
\end{proof}
\begin{corollary}
\label{heightpm1}
If $h(P)=|P|-1$ then $P\sim Q$ if and only if $|P|=|Q|$, $h(P)=h(Q)$, and $e(P)=e(Q)$.
\end{corollary}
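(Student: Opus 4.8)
The plan is to read off this corollary as the $h(P) = |P|-1$ specialization of Proposition~\ref{prop:classification}, so essentially no new work is required beyond a single arithmetic substitution. First I would dispatch the forward direction: if $P \sim Q$, then the first statement of Lemma~\ref{invariants} already gives $|P| = |Q|$, $h(P) = h(Q)$, and $e(P) = e(Q)$, since each of these quantities is determined by the order polynomial. This half holds for all posets and needs no hypothesis on the height.

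For the converse I would appeal directly to the counting argument in Proposition~\ref{prop:classification}. That proposition requires, in addition to the three invariants, that $F_P$ and $F_Q$ agree at $|P| - h(P) - 1$ further points beyond the trivial common roots. Substituting $h(P) = |P| - 1$ gives $|P| - h(P) - 1 = 0$, so the extra-agreement requirement is vacuous and the three stated conditions alone are sufficient. Concretely, $e(P) = e(Q)$ together with $|P| = |Q|$ forces $F_P$ and $F_Q$ to share the same leading coefficient, so $F_P - F_Q$ has degree at most $|P| - 1$; by Corollary~\ref{rootresult} and the fact that $F_P(1) = F_Q(1) = 1$, this difference already vanishes at the $h(P) + 1 = |P|$ points $x = 1, 0, -1, \ldots, -h(P)+1$, whence $F_P - F_Q$ is identically zero and $P \sim Q$.

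There is no genuine obstacle here; the only point that deserves a moment's care is the bookkeeping of the trivial roots. I would double-check that the matching leading coefficients really do drop the degree of $F_P - F_Q$ to at most $|P| - 1$ (this is where $e(P) = e(Q)$ enters, via $c_{|P|} = e(P)$ in the notation of Proposition~\ref{basisformula}), and that Corollary~\ref{rootresult} supplies exactly the $h(P)$ roots $0, -1, \ldots, -h(P)+1$, which, augmented by the universal value $F_P(1) = F_Q(1) = 1$, furnish the full complement of $|P|$ zeros needed to annihilate a polynomial of degree at most $|P|-1$. Once this tally is confirmed, the corollary follows immediately.
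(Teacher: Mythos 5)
Your proposal is correct and matches the paper's (implicit) argument: the corollary is stated without a separate proof precisely because it is the $h(P)=|P|-1$ specialization of Proposition~\ref{prop:classification}, where the required $|P|-h(P)-1=0$ extra points of agreement make that condition vacuous. Your bookkeeping of the roots ($h(P)$ trivial roots from Corollary~\ref{rootresult} plus the universal agreement at $x=1$, totalling $|P|$ zeros of a degree-$\leq|P|-1$ polynomial) is exactly the count used in the proof of that proposition.
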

\begin{corollary}
\label{heightpm2}
If $h(P)=|P|-2$ then $P\sim Q$ iff $|P|=|Q|$, $F_P(2)=F_Q(2)$, $h(P)=h(Q)$, and $e(P)=e(Q)$.
\end{corollary}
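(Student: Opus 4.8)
The plan is to read off Corollary~\ref{heightpm2} as the special case $h(P)=|P|-2$ of Proposition~\ref{prop:classification}. Recall that Proposition~\ref{prop:classification} asserts $P\sim Q$ exactly when $|P|=|Q|$, $h(P)=h(Q)$, $e(P)=e(Q)$, and $F_P$ and $F_Q$ agree at $|P|-h(P)-1$ points beyond the forced agreements at $x=1,0,-1,\ldots,-h(P)+1$. Substituting $h(P)=|P|-2$ gives $|P|-h(P)-1=1$, so a single additional point of agreement is all that is required, and the task reduces to choosing such a point and identifying the resulting condition with $F_P(2)=F_Q(2)$.

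Next I would argue that $x=2$ may serve as this single extra point. Since $h(P)=|P|-2$ forces $|P|\geq 3$ and hence $h(P)\geq 1$, the forced-agreement locus $\{1,0,-1,\ldots,-h(P)+1\}$ consists of integers that are $\leq 1$ and therefore does not contain $2$. Thus $F_P(2)=F_Q(2)$ is genuinely a statement about a free evaluation point, and Proposition~\ref{prop:classification} applies with this choice.

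The forward direction is then immediate: if $P\sim Q$ then $F_P=F_Q$ as polynomials, so in particular $F_P(2)=F_Q(2)$, while $|P|=|Q|$, $h(P)=h(Q)$, and $e(P)=e(Q)$ all follow from the first statement of Lemma~\ref{invariants}. For the converse, assuming the four listed equalities, the hypotheses $|P|=|Q|$, $h(P)=h(Q)$, $e(P)=e(Q)$ together with the agreement $F_P(2)=F_Q(2)$ at the one free point furnish precisely the hypotheses of Proposition~\ref{prop:classification} in the case $|P|-h(P)-1=1$, whence $P\sim Q$.

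There is essentially no obstacle here beyond the bookkeeping of the second paragraph: the only thing to verify is that $2$ lies outside the trivial vanishing/agreement set, which is immediate once one observes $h(P)\geq 1$. The substance of the corollary is inherited entirely from Proposition~\ref{prop:classification}, exactly mirroring how Corollary~\ref{heightpm1} arises from the degenerate case $|P|-h(P)-1=0$ in which no free point is needed at all.
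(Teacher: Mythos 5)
Your proposal is correct and matches the paper's intended argument: the corollary is stated immediately after Proposition~\ref{prop:classification} with no separate proof, precisely because it is the specialization $|P|-h(P)-1=1$ with the single free evaluation point taken to be $x=2$ (which, as you note, lies outside the trivial agreement set since $h(P)\geq 1$). Your bookkeeping of the forward direction via Lemma~\ref{invariants} and the converse via the proposition is exactly what the paper relies on.
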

In the section that follows we will not use the $e(P)$ invariant, but it is particularly useful for enumerating $\mathscr H_1$ and $\mathscr H_2$.
Another useful invariant is the value $(-1)^{|P|}F_P(-h(P))$ which is equal to 1 if and only if every element of $P$ is contained in a chain of cardinality $h(P)$.
A proof of this result can be found in the appendix.
\subsection{Classifying $\mathscr H_k$}
The height invariant, Corollary \ref{rootresult}, and the underlying structure of $\mathscr H_k$ allow us to theoretically classify all its doppelgangers in time dependent on $k$. 
In addition, leveraging this same structure allows us to efficiently compute the order polynomial of posets in $\mathscr H_k$ in time $O(n)$.
\begin{lemma}
\label{chainstructure}
If $x_1\leq\cdots\leq x_h$ is a chain in $P$ and $x$ is some other element of $P$, then there exist nonnegative integers $a+b+c=h$ such that $x$ is greater than $x_1,\cdots,x_a$, $x$ is incomparable to $x_{a+1},\cdots,x_{a+b}$, and $x$ is less than $x_{a+b+1},\cdots,x_{a+b+c}$.
\end{lemma}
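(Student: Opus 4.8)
The plan is to classify each chain element $x_i$ by its relation to $x$ and to show that the three possible types occur in consecutive blocks in exactly the stated order. Since $x$ is an element of $P$ distinct from each $x_i$, for every index $i$ exactly one of three mutually exclusive cases holds: $x_i < x$, or $x_i$ is incomparable to $x$, or $x_i > x$. Write $G = \{i : x_i < x\}$, $I = \{i : x_i \text{ incomparable to } x\}$, and $L = \{i : x_i > x\}$; these partition $\{1,\dots,h\}$. It then suffices to show that $G$ is an initial segment $\{1,\dots,a\}$, that $L$ is a final segment $\{a+b+1,\dots,h\}$, and that $I$ fills the gap $\{a+1,\dots,a+b\}$ between them, after which we may read off $a+b+c=h$ with $a=|G|$, $b=|I|$, $c=|L|$.

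The key observation is that transitivity forces each of these sets to be monotone along the chain. First I would show $G$ is downward closed: if $x_j < x$ and $i<j$, then $x_i < x_j < x$, so $x_i < x$ and $i\in G$. Hence if $G\neq\varnothing$ it equals $\{1,\dots,a\}$ with $a=|G|$. By the dual argument $L$ is upward closed: if $x_j > x$ and $i>j$ then $x < x_j < x_i$, so $i\in L$, and thus $L=\{h-c+1,\dots,h\}$ with $c=|L|$.

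Next I would rule out any interleaving of $G$ and $L$ by showing that every index of $G$ precedes every index of $L$. Suppose $i\in G$ and $j\in L$. An element cannot be simultaneously below and above $x$, so $i\neq j$; if we had $i>j$ then $x_j < x_i$, whence $x < x_j < x_i < x$, a contradiction. Therefore $\max G < \min L$, i.e. $a \le h-c$. Setting $b = h-a-c \ge 0$ gives $a+b+c=h$, and the middle indices $\{a+1,\dots,a+b\}$, lying in neither $G$ nor $L$, must all belong to $I$; these are precisely the chain elements incomparable to $x$, as required.

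I do not anticipate a genuine obstacle, since the argument is essentially forced; the only point requiring care is confirming that the blocks appear in the order (greater, incomparable, less) rather than some other permutation, which is exactly what the downward-closure of $G$, the upward-closure of $L$, and the non-interleaving step together establish. A minor bookkeeping subtlety is the convention that a chain of cardinality $h$ has distinct, hence strictly increasing, entries $x_1 < \cdots < x_h$, which is what licenses the strict inequalities $x_i < x_j$ for $i<j$ used throughout.
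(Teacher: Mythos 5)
Your proof is correct and follows essentially the same route as the paper's: both arguments use transitivity of the chain to show the elements below $x$, incomparable to $x$, and above $x$ form three consecutive blocks in that order. Your version is somewhat more careful (explicitly ruling out interleaving and handling empty blocks), but the underlying idea is identical.
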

\begin{proof}
Let $m_1$ be maximal such that $x_{m_1}\leq x$, let $m_2$ be minimal such that $x\leq x_{m_1+m_2+1}$, and let $m_3=n-m_1-m_2$.
Then by transitivity, $x$ is greater than $x_1,\cdots,x_{m_1}$ and $x$ is less than $x_{m_1+m_2+1},\cdots,x_{m_1+m_2+m_3}$.
Additionally, $x$ is neither less than nor greater than $x_{m_1+1},\cdots,x_{m_1+m_2}$.
\end{proof}
\begin{lemma}
\label{lem:F_P}
Let $P$ be a finite poset consisting of a chain $x_1\leq\ldots\leq x_{h(P)}$ and $k=|P|-h(P)$ other elements off the chain $y_1,\ldots,y_k$. Consider applying Lemma \ref{chainstructure} to each $y_i$, resulting in values $a$ and $a+b$ for each term. For convenience, we define $(a_1 \leq \ldots \leq a_{2k})$ to be the ordering of these $2k$ values, and further define $a_0=0\leq a_1$, and $a_{2k+1} = h(P)+1 \geq a_{2k}$. Let $d_i, 0 \leq i \leq 2k$, be the difference between the $i$ and $i+1$st terms in this sequence, i.e. $d_i = a_{i+1}-a_{i}$. The value of $F_P(m)$ is a polynomial in the $d_i$ and can be computed in $O(m^{3k+1})$.
\end{lemma}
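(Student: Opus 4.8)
The plan is to count order-preserving maps $f\colon P\to[m]$ by first fixing the values on a small set of ``pivot'' elements and then counting the ways to complete the map. An order-preserving map restricts to a weakly increasing sequence $1\le f(x_1)\le\cdots\le f(x_{h(P)})\le m$ on the chain, and by Lemma~\ref{chainstructure} each off-chain element $y_i$ interacts with the chain only through its two cutoffs $a^{(i)}$ and $a^{(i)}+b^{(i)}$: the constraints it imposes are $f(y_i)\ge f(x_j)$ for $j\le a^{(i)}$ and $f(y_i)\le f(x_j)$ for $j> a^{(i)}+b^{(i)}$. Collecting all $2k$ cutoffs and sorting them into $a_1\le\cdots\le a_{2k}$ (with the boundary conventions $a_0=0$, $a_{2k+1}=h(P)+1$) partitions the chain into the $2k+1$ segments whose lengths are exactly the $d_i$.

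The key observation is that the constraints imposed on a chain element $x_j$ by the off-chain elements change only at the cutoff positions; hence within each segment every chain element is subject to the \emph{same} lower and upper bound. I would therefore condition on the $3k$ values consisting of the $k$ off-chain values $f(y_1),\ldots,f(y_k)$ together with the $2k$ chain values at the cutoff positions $f(x_{a_1}),\ldots,f(x_{a_{2k}})$. Once these are fixed (subject to the $O(1)$-many consistency inequalities coming from the chain order, the cutoff relations, and any relations among the $y_i$), the number of ways to fill in the chain values inside segment $i$ is the number of weakly increasing sequences of length roughly $d_i$ taking values in a fixed interval whose size is determined by the conditioned pivot values. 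This count is a single binomial coefficient $\binom{g+d_i}{d_i}$, which for fixed interval size $g\le m$ is a polynomial in $d_i$.

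Assembling the pieces, $F_P(m)$ is a sum over the $3k$ conditioned values --- each ranging over a subset of $[m]$ cut out by inequalities that do \emph{not} involve the $d_i$ --- of a product of the $2k+1$ segment binomials. Since products and ($d_i$-independent) sums of polynomials in the $d_i$ are again polynomials in the $d_i$, this exhibits $F_P(m)$ as a polynomial in $d_0,\ldots,d_{2k}$. For the running time, there are at most $3k$ summation indices each bounded by $m$, giving $O(m^{3k})$ tuples, and evaluating the product of binomials for each tuple costs $O(m)$, for a total of $O(m^{3k+1})$.

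The main obstacle is the careful bookkeeping behind the segment-factorization claim: one must verify that the active lower and upper bounds really are constant on each segment (so that the within-segment count is a clean binomial), handle the boundary segments governed by the conventions $a_0=0$ and $a_{2k+1}=h(P)+1$, and confirm that relations \emph{among} the off-chain elements $y_i$ only restrict the $d_i$-independent summation region rather than entering the segment counts. Once this is in place, both the polynomiality in the $d_i$ and the complexity bound follow directly.
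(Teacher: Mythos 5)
Your proposal is correct and follows essentially the same route as the paper: both condition on the values $f(y_1),\ldots,f(y_k)$ together with the behavior of $f$ at the $2k$ cutoff positions (you fix the values $f(x_{a_j})$ directly, the paper equivalently fixes the number of increases of $f$ in each segment), and both then factor the completion count into a product of per-segment stars-and-bars binomials that are polynomials in the $d_i$, yielding the same $O(m^{3k+1})$ bound. The bookkeeping caveats you flag (constancy of the active bounds on each segment, the boundary conventions, and relations among the $y_i$ affecting only the summation region) are exactly the details the paper's proof glosses over in the same way.
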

\begin{proof}
To count the number of order preserving $f\colon P\to[m]$, we sum over at most $m^k$ possible choices of the values of $f$ on the $y_i$.
Note that the values of $f$ on the $x_i$ are completely determined by the locations of the $m-1$ locations where the value of $f$ increases (note that these increases may occur before $x_1$ or after $x_{h(P)}$).
Then for each choice of the value of $f$ on the $y_i$, we sum over the possible choices for how many times $f$ increases between each $x_{a_i}$ and $x_{a_i+1}$.
There are $2k+1$ such pairs and $m-1$ increases so a stars and bars argument gives that there are $\binom{2k+m}{2k+1}$ possible choices for how many times $f$ increases between each pair of consecutive $a_i$.
Then we are summing over at most $m^k\binom{2k+m}{2k+1}$ ways to choose the values of $f$ on the $y_i$ and the locations of the increases of $f$ on the chain, relative to the $a_i$.
Finally, each summand will be a product of $\binom{d_i+j-1}{j}$, where $j$ is the number of increases in between $x_{a_{i}}$ and $x_{a_{i+1}}$. Thus by symmetry there are a total of $m^k\binom{2k+m}{2k+1}$ steps, and as $\binom{2k+m}{2k+1}$ is a polynomial in $m$ of degree at most $2k+1$, this is $O(m^{3k+1})$.
\end{proof}
\begin{lemma}
\label{lem:F_P2}
Given a poset $P$, $|P|=n$, with $h(P)=n-k$, computing the $a_i$ and $d_i$ of Lemma \ref{lem:F_P} takes $O(n)$ time.
\end{lemma}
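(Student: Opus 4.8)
The plan is to treat $P$ as a directed acyclic graph $G$ whose vertices are the elements of $P$ and whose edges are the covering relations, oriented upward. I would first record the input convention on which the bound depends: the $O(n)$ claim is only meaningful when $P$ is presented by its covering relations (an adjacency list), not by its full comparability matrix, which alone takes $\Theta(n^2)$ to read. With this convention $G$ is sparse. Indeed, by Lemma~\ref{chainstructure} the chain elements below any off-chain element form a prefix and those above form a suffix, so each off-chain element is cover-related to at most one chain element from below and one from above; hence $G$ has $(n-k-1)+O(k)+O(k^2)=O(n)$ edges for constant $k$, and any linear-time graph routine costs $O(n)$. I would then recover a maximal chain: a longest-path dynamic program in topological order computes, for every vertex $v$, the length $\ell(v)$ of the longest chain ending at $v$ in $O(|V|+|E|)=O(n)$ time. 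Since $h(P)=n-k$, some vertex attains $\ell(v)=n-k$, and tracing back through lower covers whose value drops by one recovers a chain $x_1\leq\cdots\leq x_{n-k}$; the remaining $k$ vertices are the off-chain elements $y_1,\dots,y_k$.

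The heart of the argument is computing, for each $y_i$, the two breakpoints $a^{(i)}$ and $a^{(i)}+b^{(i)}$ of Lemma~\ref{chainstructure} without forming the potentially quadratic transitive closure. Here I would exploit the monotone structure guaranteed by Lemma~\ref{chainstructure}. Labeling each chain element $x_j$ with its index $j$, define for every vertex $v$
\[
\mathrm{low}(v)=\max\{\,j : x_j \leq v\,\},
\qquad
\mathrm{high}(v)=\min\{\,j : x_j \geq v\,\},
\]
with the conventions $\mathrm{low}(v)=0$ and $\mathrm{high}(v)=n-k+1$ when the relevant set is empty. Because $x_j\leq v$ holds exactly when $v=x_j$ or $x_j\leq u$ for some lower cover $u\lessdot v$, the quantity $\mathrm{low}$ obeys $\mathrm{low}(v)=\max\bigl(\{j:v=x_j\}\cup\{\mathrm{low}(u):u\lessdot v\}\bigr)$, which is evaluated by a single forward sweep in topological order; $\mathrm{high}$ is computed symmetrically by a reverse sweep. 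Each sweep inspects every edge once and so runs in $O(n)$. Since the chain elements below an off-chain $y_i$ form the prefix $x_1,\dots,x_{a^{(i)}}$, their count equals the maximal index, giving $a^{(i)}=\mathrm{low}(y_i)$, and dually $a^{(i)}+b^{(i)}=\mathrm{high}(y_i)-1$.

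Finally I would assemble the output. The $2k$ breakpoints are integers in $\{0,1,\dots,n-k\}$, so a counting (bucket) sort orders them as $a_1\leq\cdots\leq a_{2k}$ in $O(n)$ time; prepending $a_0=0$, appending $a_{2k+1}=h(P)+1$, and forming the successive differences $d_i=a_{i+1}-a_i$ costs $O(k)$. Summing the four phases yields the claimed $O(n)$ bound. I expect the only genuine obstacle to be the second phase: a direct comparability test between each $y_i$ and each chain element, or a generic reachability computation, would cost $\Omega(nk)$ or more, so the argument relies essentially on Lemma~\ref{chainstructure} to collapse all $O(nk)$ comparabilities into the two linear-time order-ideal sweeps above.
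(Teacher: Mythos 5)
Your proposal is correct and follows essentially the same route as the paper: take the input as a Hasse diagram, bound the number of cover relations by $n+O(k^2)$ using the chain/off-chain partition, recover a maximal chain by a linear-time longest-path computation, and then read off the breakpoints of Lemma~\ref{chainstructure} from the few edges incident to the off-chain elements. If anything, your final phase is more careful than the paper's: the paper simply asserts that the at most $2k$ edges between the chain and the off-chain set recover the $a_i$, whereas your topological sweeps computing $\mathrm{low}$ and $\mathrm{high}$ correctly account for the possibility that a comparability $x_j\leq y_i$ is witnessed only through a path passing through other off-chain elements, a case the paper's phrasing glosses over.
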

\begin{proof}
Once we have identified our maximal chain, it is easy to compute $a_i$ and $d_i$ in linear time. Thus, we first prove that we may identify this chain of $P$ in linear time. We require that $P$ be given as a Hasse Diagram, a directed acyclic graph ($DAG$) $G(V,E)$ of cover relations. Given our restriction $h(P) = n-k$, we first wish to bound $|E|$. For analysis, we will partition $V$ into the sets $C$, $n-k$ nodes on our maximal chain, and $Q$, the $k$ ``off-chain" nodes, and count the edges within and between $C$ and $Q$. $C$ is our chain, and thus has exactly $n-k-1$ internal edges. $Q$ could be any poset of size $k$, but because $G$ is a $DAG$, there can be at most $\frac{k(k-1)}{2}$ internal edges. Finally, by the structure outlined in Lemma \ref{lem:F_P}, there can be at most $2k$ edges between $C$ and $Q$. Together, these give the bound
\begin{align*}
|E|\leq n + \frac{(k+1)k}{2}.
\end{align*}
This allows us to run Depth First Search (DFS) on $G$ in $O(n)$ time. Treating $G$ as an undirected graph, we may find all local minima and maxima of $P$ (sinks of $G$) by running a DFS from any node along each connected component. Because $h(P)=n-k$, there can be at most $k+1$ local minima and $k+1$ local maxima. From here finding $C$ is a simple matter of finding the longest path between any minima and maxima, which has a well known linear solution in $O(|V| + |E|)=O(n)$. The complement of $C$ gives $Q$, and the at most $2k$ edges between $C$ and $Q$ recover the $a_i$ and $d_i$ of Lemma \ref{lem:F_P} in linear time.
\end{proof}
We are now prepared to prove Theorem \ref{class-time}, which we split into three components.
\begin{theorem}
\label{prop:complexity}
For $|P|=n$ and constant $k$, the order polynomial of $P \in \mathscr H_k$ can be computed in $O(n)$ time.
\end{theorem}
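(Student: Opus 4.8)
The plan is to exploit the fact that, for $P\in\mathscr H_k$, the order polynomial $F_P$ has only $k+1$ genuine degrees of freedom, so that it can be pinned down by a constant number of pointwise evaluations once the unavoidable $O(n)$ cost of reading off the chain structure has been paid. Concretely, by Proposition~\ref{basisformula} together with the hypothesis $h(P)=n-k$, we may write
\[
F_P(m)=(-1)^{n}\sum_{j=n-k}^{n}(-1)^j c_j\binom{m+j-1}{j},
\]
a chain-basis expansion with only $k+1$ nonzero coefficients $c_{n-k},\ldots,c_n$. Thus ``computing $F_P$'' reduces to determining these $k+1$ numbers, which for constant $k$ is a constant-size output (equivalently, the factored form $F_P(m)=\bigl[\prod_{l=0}^{n-k-1}(m+l)\bigr]g(m)$ with $\deg g=k$, using the known roots from Corollary~\ref{rootresult}).

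First I would invoke Lemma~\ref{lem:F_P2} to locate a maximal chain and extract the associated quantities $a_i$ and $d_i$ in $O(n)$ time; this is the only step whose cost grows with $n$. With the $d_i$ in hand, Lemma~\ref{lem:F_P} evaluates $F_P(m)$ at any fixed argument $m$ in $O(m^{3k+1})$ time. I would then evaluate $F_P$ at the $k+1$ points $m=1,2,\ldots,k+1$: since each argument is at most $k+1$, each evaluation costs $O\bigl((k+1)^{3k+1}\bigr)=O(1)$ for constant $k$, and there are only $k+1$ of them, so this phase is $O(1)$.

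It remains to recover $c_{n-k},\ldots,c_n$ from the computed values $F_P(1),\ldots,F_P(k+1)$, i.e.\ to invert the linear system with matrix $M_{ij}=\binom{i+j-1}{j}$ for $1\le i\le k+1$ and $n-k\le j\le n$. The main point to verify is that this system is nonsingular. This follows from the common-root structure of the chain basis: each $\binom{m+j-1}{j}$ with $j\ge n-k$ vanishes at $m=0,-1,\ldots,-(n-k-1)$, so it factors as $\bigl[\prod_{l=0}^{n-k-1}(m+l)\bigr]q_j(m)$ with $\deg q_j=j-(n-k)\in\{0,\ldots,k\}$. Hence $M=DQ$, where $D$ is the diagonal matrix of the (nonzero, since $i\ge 1$) common factors evaluated at $m=1,\ldots,k+1$, and $Q_{ij}=q_j(i)$ is the evaluation matrix of the degree-graded basis $\{q_j\}$ at $k+1$ distinct points. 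Since a polynomial of degree at most $k$ cannot vanish at $k+1$ distinct points, $Q$ is invertible, and therefore so is $M$. Solving this constant-size system and reporting the $c_j$ is $O(1)$ for constant $k$, so the total running time is dominated by the $O(n)$ preprocessing of Lemma~\ref{lem:F_P2}.

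I expect the invertibility claim to be the only substantive obstacle; the rest is bookkeeping about the cost model. An approach that sidesteps the generalized Vandermonde computation altogether is to use Corollary~\ref{rootresult} directly: divide each value $F_P(i)$ by the known factor $\prod_{l=0}^{n-k-1}(i+l)$ to obtain $g(i)$, and then interpolate the degree-$k$ polynomial $g$ from its $k+1$ values, again in $O(1)$ time for constant $k$.
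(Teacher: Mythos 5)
Your proposal is correct and follows essentially the same route as the paper: pay $O(n)$ once via Lemma~\ref{lem:F_P2} to extract the chain data, evaluate $F_P$ at the constantly many points $1,\ldots,k+1$ via Lemma~\ref{lem:F_P}, and recover the remaining degree-$k$ factor using the roots guaranteed by Corollary~\ref{rootresult} (your ``sidestep'' paragraph is precisely the paper's interpolation of $f_k$ after dividing out $\prod_{l}(m+l)$). Your chain-basis formulation with the explicit $M=DQ$ invertibility argument is a slightly more careful packaging of the same idea, and is a fine substitute for the paper's appeal to interpolation.
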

\begin{proof}
We claim that given all $a_i$ and $d_i$, computing the order polynomial is poly-logarithmic.
Lemma \ref{rootresult} allows us to compute the factored form of the order polynomial by polynomial interpolation and factorization of the remaining k roots. Let this polynomial be $f_k(x)$, then 
\[
F_P(x)=f_k(x)\prod\limits_{i=0}^{n-(k+1)}(x+i)
\]
We may compute $f_k(x)$ by polynomial interpolation on $F_P(1)=1,F_P(2),\ldots,F_P(k+1)$. With these values in hand, interpolation and factorization is polynomial in k by the ``LLL" algorithm \cite{LLL}. Assuming we know the structure of $P$, Lemma $\ref{lem:F_P}$ shows each $F_P(i)$ as a polynomial in the gaps between adjacent $a_i$ may be computed in $O(i^{3k+1})$. 
Given these values, evaluating the polynomial requires summing $O(i^{3k+1})$ products, each with $i-1$ terms of at most $log(n)$ bits. For simplicity, let $n^2$ be the complexity of n-bit multiplication, then computing the products takes $O(i^{3k+1}i^2log^2(n))$. The evaluated products are of at most $ilog(n)$ bits. Summing these then takes $i^{3k+1}(i^{3k}+ilog(n))$, so computing and evaluating all $F_P(i)$ takes poly-log time. Thus Lemma \ref{lem:F_P2} shows that computing the $a_i$ and $d_i$ is our bottleneck, and computing the order polynomial takes $O(n)$ time. 
\end{proof}
For constant k, we have shown that our subfamily of Faigle and Schrader's $\mathscr W_k$ may be computed in $O(n)$ time, and thus does not grow asymptotically in k as their $O(n^{2k+1})$ bound does. This allows us to extend our family to non-constant values of $k$.
\begin{corollary}
For $|P|=n$ and $k=O(\frac{log(n)}{log(log(n))})$, the order polynomial of $P \in \mathscr H_k$ may be computed in polynomial time.
\end{corollary}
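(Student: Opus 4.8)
The plan is to revisit the complexity accounting in the proof of Theorem~\ref{prop:complexity} and isolate which steps cease to be constant-cost once $k$ is permitted to grow with $n$. For constant $k$, the evaluations $F_P(1),\ldots,F_P(k+1)$ furnished by Lemma~\ref{lem:F_P} each require only $O(1)$ arithmetic operations and were absorbed into the poly-logarithmic overhead, leaving the $O(n)$ extraction of the $a_i$ and $d_i$ from Lemma~\ref{lem:F_P2} as the genuine bottleneck. Once $k$ grows, these roles reverse, so I would track the evaluation cost explicitly rather than suppress it.

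First I would observe that the most expensive single evaluation is $F_P(k+1)$, which by Lemma~\ref{lem:F_P} requires $O((k+1)^{3k+1})$ arithmetic operations; each of $F_P(1),\ldots,F_P(k)$ is cheaper, and the subsequent interpolation together with the LLL factorization of the remaining roots is polynomial in $k$, hence certainly polynomial in $n$. Thus the entire order-polynomial computation runs in $\mathrm{poly}(n)\cdot O((k+1)^{3k+1})$ operations, and it suffices to prove that $(k+1)^{3k+1}$ is itself polynomial in $n$ whenever $k=O(\log n/\log\log n)$.

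The key estimate is obtained by taking logarithms. Writing $k\le c\,\log n/\log\log n$, the cancellation $\log(\log n/\log\log n)=\log\log n-\log\log\log n=\Theta(\log\log n)$ gives $\log(k+1)=\Theta(\log\log n)$, so that
\[
(3k+1)\log(k+1)=O\!\left(\frac{\log n}{\log\log n}\cdot\log\log n\right)=O(\log n),
\]
whence $(k+1)^{3k+1}=n^{O(1)}$ as required. I would then confirm the bit-complexity is also controlled: since $F_P(m)\le m^{n}$ and $m\le k+1$, every integer encountered has $O(n\log\log n)$ bits, so each arithmetic operation is polynomial in $n$; combined with a polynomial number of operations this keeps the whole computation polynomial in $n$.

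The main obstacle is essentially bookkeeping rather than new combinatorics, since the substantive work is already carried by Lemmas~\ref{lem:F_P} and~\ref{lem:F_P2} and by Theorem~\ref{prop:complexity}. The one point I would state with care is the cancellation yielding $\log(k+1)=\Theta(\log\log n)$, as it is precisely this cancellation that makes $\log n/\log\log n$ (rather than $\log n$) the correct threshold: choosing $k=\Theta(\log n)$ would instead give $k\log k=\Theta(\log n\log\log n)$ and push $(k+1)^{3k+1}$ into the quasi-polynomial regime.
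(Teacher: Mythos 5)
Your proposal is correct and follows essentially the same route as the paper: both take the complexity bound already established for $\mathscr H_k$ (the evaluations $F_P(2),\ldots,F_P(k+1)$ via Lemma~\ref{lem:F_P} being the only super-constant cost in $k$) and observe that $k\log k=O(\log n)$ when $k=O(\log n/\log\log n)$, so the $k^{O(k)}$ term is $n^{O(1)}$. Your explicit bit-complexity check is a slightly more careful piece of bookkeeping than the paper's, but the argument is the same.
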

\begin{proof}
Proposition \ref{prop:complexity} showed that we can compute the order polynomial in $O(n+k^{3k+2}(k^{3k} + k^2log^2(n)))$ time. Setting $k=c\frac{log(n)}{log(log(n))}$ gives $k^k=O(n^c)$, and thus the leading term becomes $k^{6k+2}$ which is polynomial for $k=O(\frac{log(n)}{log(log(n))})$
\end{proof}
\begin{proposition}
The doppelgangers of posets in $\mathscr H_k$ can be completely classified up to sets of k diophantine equations in $2^{O(k^2)}$ time.
\end{proposition}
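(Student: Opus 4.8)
The plan is to convert the rigid structure of $\mathscr{H}_k$ into a finite bookkeeping problem: each poset $P\in\mathscr{H}_k$ is described by a bounded amount of combinatorial ``type'' data together with a vector of gap lengths, and for each fixed type the order polynomial $F_P$ becomes a polynomial in the gaps whose coefficients are independent of $n$. First I would fix the structural normal form of Lemma~\ref{lem:F_P}: every $P\in\mathscr{H}_k$ consists of a maximal chain $x_1\leq\cdots\leq x_{n-k}$ together with $k$ off-chain elements $y_1,\ldots,y_k$, and by Lemma~\ref{chainstructure} each $y_i$ meets the chain in an interval cut out by two breakpoints. I would then define the \emph{type} $T$ of $P$ to record (i) the induced order on $\{y_1,\ldots,y_k\}$, and (ii) the weak ordering (order with ties) of the $2k$ breakpoints along the chain, together with the assignment of breakpoints to the $y_i$. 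The residual freedom after fixing $T$ is exactly the gap vector $\vec d=(d_0,\ldots,d_{2k})$ of Lemma~\ref{lem:F_P}, a tuple of nonnegative integers with $\sum_i d_i=n-k+1$; one checks that $T$ together with $\vec d$ recovers $P$ up to isomorphism.

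The first key step is to read off from the proof of Lemma~\ref{lem:F_P} that, for a fixed type $T$, the order polynomial exhibits $F_P$ as a polynomial in the gap variables $\vec d$ whose coefficients are determined by $T$ alone and do not depend on $n$. Indeed, the summation in that proof ranges over choices indexed by the interleaving pattern (encoded by $T$), and each summand is a product of binomials $\binom{d_i+j-1}{j}$ in the gaps. Consequently, for any fixed evaluation point $c$, the value $F_P(c)$ is a polynomial of degree at most $(2k+1)k$ in $\vec d$ with type-dependent coefficients.

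The second key step is to reduce ``$P\sim Q$'' to exactly $k$ polynomial equations in the gaps. By Corollary~\ref{rootresult}, every $P\in\mathscr{H}_k$ of common size $n$ satisfies $F_P(x)=f^P_k(x)\prod_{i=0}^{n-k-1}(x+i)$ for a residual degree-$k$ factor $f^P_k$, and $P\sim Q$ iff $f^P_k=f^Q_k$. Since $F_P(1)=F_Q(1)=1$ forces $f^P_k(1)=f^Q_k(1)$ automatically, two such degree-$k$ factors coincide iff $F_P$ and $F_Q$ agree at the $k$ fixed points $x=2,\ldots,k+1$, none of which is a shared root; this matches the count $|P|-h(P)-1=k-1$ of Proposition~\ref{prop:classification} once the automatic agreements $|P|=|Q|$ and $h(P)=h(Q)$ within $\mathscr{H}_k$ are accounted for. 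By the first step, for a fixed pair of types $(T,T')$ each equation $F_P(c)=F_Q(c)$ is a polynomial, hence diophantine, relation in the $4k+2$ gap variables of $P$ and $Q$, with coefficients computable from $(T,T')$.

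Finally I would bound the enumeration. The induced order on the $k$ off-chain elements ranges over the posets on $k$ points, of which there are $2^{O(k^2)}$, while the labeled weak ordering of the $2k$ breakpoints contributes only a further factor of $2^{O(k\log k)}$; hence there are $2^{O(k^2)}$ types in total, and $2^{O(k^2)}$ ordered pairs of types. For each pair one computes the coefficients of the $k$ equations using the first step, and the whole classification, a list of diophantine systems, one per type-pair, whose nonnegative integer solutions $(\vec d,\vec d')$ enumerate the doppelganger pairs, is produced in $2^{O(k^2)}$ time. The main obstacle is the first step: making precise that once the type is fixed the coefficients of $F_P$ in the gap variables are genuinely independent of $n$ and uniformly computable within budget, so that a single finite system captures the infinite family simultaneously for all $n$. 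Defining the type finely enough that type-plus-gaps recovers $P$ up to isomorphism, yet coarsely enough to keep the count at $2^{O(k^2)}$, is the delicate balance.
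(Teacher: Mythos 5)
Your proposal is correct and follows essentially the same route as the paper: enumerate the $2^{O(k^2)}$ ``types'' (the paper's ``families'': the induced poset on the $k$ off-chain elements together with the relative ordering of the $2k$ breakpoints), use Lemma~\ref{lem:F_P} to express each $F_P(c)$ as a polynomial in the gap variables with type-dependent coefficients, and reduce $P\sim Q$ via Corollary~\ref{rootresult} to the $k$ diophantine equations $F_P(i)=F_Q(i)$ for $i=2,\ldots,k+1$. The only cosmetic difference is that the paper first invokes Lemma~\ref{invariants} to note that any doppelganger of a poset in $\mathscr H_k$ already lies in $\mathscr H_k$, which justifies restricting to pairs of types as you do.
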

\begin{proof}
First note that by Lemma \ref{invariants}, doppelgangers of posets in $\mathscr H_k$ will themselves be posets in $\mathscr H_k$ of the same height and size.
Thus, it suffices to classify doppelgangers within $\mathscr H_k$.
If $P\in\mathscr H_k$ then $P$ consists of a chain $C_{h(P)}=\{x_1\leq\ldots\leq x_{h(P)}\}$ and $k$ other elements off the chain $Q=\{y_1,\ldots,y_k\}$.
For each $y_i$, let $a_i$ denote the number of $x_j\leq y_i$ and let $b_i$ denote the number of $x_j\not\geq y_i$.
We call a consistent choice of both a $Q$ and a relative ordering between the $a_i$ and the $b_i$ a \textit{family in }$\mathscr H_k$. Enumerating all choices of $Q$ simply corresponds to enumerating all posets of size k, which takes $2^{O(k^2)}$ time. There are at most $(2k)!$ relative orderings between the $a_i$ and $b_i$, and given a choice of $Q$, checking any given ordering is consistent takes time polynomial in $k$. For each family in $\mathscr H_k$, Lemma \ref{lem:F_P} shows the values of $F_P(2),\ldots,F_P(k+1)$ can be written as a polynomial function of the distances between the $a_i$ and $b_i$ in $O(k^{3k+1})$ time.
By Corollary \ref{rootresult}, if $P,Q$ are posets with $|P|=|Q|=h(P)+k=h(Q)+k$ then $P\sim Q$ if and only if $F_P(i)=F_Q(i)$ for $i=2,\ldots,k+1$.
Thus, the doppelgangers of posets in $\mathscr H_k$ can be completely classified up to sets of k diophantine equations in $2^{O(k^2)}$ time.
\end{proof}
\subsection{Example: $\mathscr H_1$ and $\mathscr H_2$}
While for large k, classifying the $\mathscr H_k$ may be computationally intractable, $\mathscr H_1$ and $\mathscr H_2$ are simple enough to compute by hand. We provide a classification of these families as an example of the above method, and show how the diophantine equations lead to new infinite families of doppelgangers. Note however that we choose to use the number of linear extensions $e(P)$ rather than $F_P(3)$, as described in Corollaries \ref{heightpm1} and \ref{heightpm2}. We begin by enumerating the families of $\mathscr H_1$ and $\mathscr H_2$
\begin{proposition}
All posets $P$ with $|P|-h(P)=1$ are isomorphic to a poset depicted by Figure~\ref{fig:h2}(a).
\end{proposition}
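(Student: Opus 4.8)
The plan is to use the rigidity of bounded-height posets provided by Lemma~\ref{chainstructure}. Since $|P|-h(P)=1$, a poset $P\in\mathscr H_1$ with $|P|=n$ contains a maximal chain $x_1\leq\cdots\leq x_{n-1}$ of length $h(P)=n-1$, and exactly one element $y$ lies off this chain. First I would fix such a chain and apply Lemma~\ref{chainstructure} to the element $y$, obtaining nonnegative integers $a+b+c=n-1$ so that $y>x_1,\dots,x_a$, $y$ is incomparable to $x_{a+1},\dots,x_{a+b}$, and $y<x_{a+b+1},\dots,x_{a+b+c}$. Because the $x_i$ form a total order and $y$ is the only remaining element, these relations together with the chain relations determine every comparison in $P$, so $P$ is completely specified by the triple $(a,b,c)$, which is exactly the parametrized shape drawn in Figure~\ref{fig:h2}(a).

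The key point requiring verification is that $b\geq 1$, i.e. that $y$ is genuinely incomparable to at least one chain element. If instead $b=0$, then $y$ would be comparable to all of $x_1,\dots,x_{n-1}$, and transitivity would force the total order $x_1<\cdots<x_a<y<x_{a+1}<\cdots<x_{n-1}$, a chain on $n$ elements. This contradicts $h(P)=n-1$. Hence $b\geq 1$, and the parameters satisfy $a,c\geq 0$, $b\geq 1$, and $a+b+c=n-1$; these are precisely the admissible labels for the figure.

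I expect the main (though mild) obstacle to be the bookkeeping around well-definedness: verifying that the isomorphism type of $P$ depends only on the triple $(a,b,c)$ and not on the particular maximal chain chosen, and confirming that no two distinct admissible triples yield isomorphic posets, so that the figure is an honest classification rather than merely a covering. The former holds because Lemma~\ref{chainstructure} pins down $y$'s relations to the chain contiguously, leaving no freedom once the chain is fixed; the latter follows by recovering $(a,b,c)$ intrinsically from $P$ as the number of chain elements respectively below $y$, incomparable to $y$, and above $y$. With these checks in place, every $P\in\mathscr H_1$ is isomorphic to the poset of Figure~\ref{fig:h2}(a), completing the proof.
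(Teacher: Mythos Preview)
Your approach is essentially identical to the paper's: pick a maximal chain, apply Lemma~\ref{chainstructure} to the single off-chain element, and read off the parameters $(m_1,m_2,m_3)$ so that $P\cong Tri(m_1,m_2,m_3)$. The paper's proof is a three-line version of your first paragraph; your additional checks that $b\geq1$ and that the triple is intrinsically recoverable go beyond what the proposition as stated requires (it only asserts that every such $P$ matches \emph{some} instance of the figure, not that the parametrization is bijective), but they are correct and do no harm.
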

\begin{proof}
Let $C$ be a maximal chain in $P$ and let $x$ be the remaining element of $P$.
Let $m_1,m_2,m_3$ be the result of applying Lemma~\ref{chainstructure} to $C$ and $x$.
Then $P\cong Tri(m_1,m_2,m_3)$.
\end{proof}
\begin{proposition}
All posets $P$ with $|P|-h(P)=2$ are isomorphic to poset depicted by Figures~\ref{fig:h2}(b-e).
\end{proposition}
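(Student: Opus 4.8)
The plan is to reduce to the same normal form used for $\mathscr{H}_1$ and then carry out a finite case analysis on the two off-chain elements. Fix a maximal chain $x_1\le\cdots\le x_h$ (with $h=h(P)$) and let $y_1,y_2$ be the two remaining elements. Applying Lemma~\ref{chainstructure} to the chain and each $y_i$ records $y_i$ by nonnegative integers $a_i+b_i+c_i=h$, equivalently by its \emph{incomparability window} $W_i=\{a_i+1,\ldots,a_i+b_i\}$ of chain positions incomparable to $y_i$. A preliminary observation I would establish first is that $b_i\ge 1$ for each $i$: if $b_i=0$ then $y_i$ is comparable to every chain element and could be inserted between $x_{a_i}$ and $x_{a_i+1}$, producing a chain of length $h+1$ and contradicting maximality. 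Thus each window is nonempty, and the isomorphism type of $P$ is completely determined by the ordered pair of windows $(W_1,W_2)$ together with the relation between $y_1$ and $y_2$.

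The heart of the argument is a consistency-and-forcing lemma describing exactly which relations between $y_1$ and $y_2$ are permitted by a given pair of windows. By transitivity through the chain, adding $y_1<y_2$ is consistent if and only if
\[
a_1\le a_2 \quad\text{and}\quad a_1+b_1\le a_2+b_2,
\]
i.e.\ $W_1$ is weakly left-shifted relative to $W_2$ (everything below $y_1$ must lie below $y_2$, and everything above $y_2$ must lie above $y_1$); moreover $y_1<y_2$ is \emph{forced} exactly when some chain element lies above $y_1$ and below $y_2$, that is when $a_1+b_1<a_2$. I would prove this by checking that each of the transitivity obligations $x_j\le y_1\Rightarrow x_j\le y_2$ and $y_2\le x_j\Rightarrow y_1\le x_j$ translates into one of the two displayed inequalities, and that a chain element strictly between the windows yields a chain $y_1<x_j<y_2$.

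With this in hand I would organize the enumeration by the relative position of the two nonempty windows, which up to swapping $y_1\leftrightarrow y_2$ and applying the dual $P\mapsto P^*$ (which reverses the chain and all relations) falls into the three patterns \emph{separated}, \emph{overlapping}, and \emph{nested}. For each pattern the consistency/forcing lemma pins down which relations between $y_1$ and $y_2$ can occur --- e.g.\ a separated pair with a gap forces comparability, a strictly nested pair forbids it, and adjacent or overlapping pairs admit both --- and each admissible combination is read off as one of the Hasse templates in Figure~\ref{fig:h2}(b--e), with the five segment lengths $d_0,\ldots,d_4$ of Lemma~\ref{lem:F_P} as free nonnegative parameters so that boundary cases (zero-length gap or overlap, coincident endpoints) merge into the neighboring family rather than producing new ones.

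The main obstacle I anticipate is twofold. First, making the case split genuinely exhaustive while correctly applying the forcing rules, since comparability between $y_1$ and $y_2$ can be hidden or forced in ways not visible from the windows at a glance (the four-element \emph{N} poset is the instructive example, arising as a nested-window pair that is nonetheless comparable). Second, verifying that the superficially different configurations produced by the case split collapse, under the swap and dual symmetries and under the parametrization by segment lengths, onto exactly the four templates (b)--(e) with no omissions and no spurious fifth family. Once the symmetry reductions and the forcing lemma are in place, matching each surviving case to its figure is routine.
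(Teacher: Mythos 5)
Your proposal is correct and follows essentially the same route as the paper: fix a maximum chain, apply Lemma~\ref{chainstructure} to each of the two off-chain elements, and enumerate the possibilities according to the relative positions of their incomparability windows and the (possibly forced) relation between the two elements, which is exactly the eight-case table the paper writes out explicitly with the $Dtri$, $Ntri$, $Xdis$, $Xcon$ parameters. Your added observations (nonemptiness of each window by maximality of the chain, and the consistency/forcing inequalities for the relation between $y_1$ and $y_2$) are correct and in fact supply the justification for exhaustiveness that the paper leaves implicit.
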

\begin{proof}
Let $C$ be a maximal chain in $P$ and let $x,y$ be the two remaining elements of $P$.
Let $m_1,m_2,m_3$ be the result of applying Lemma~\ref{chainstructure} to $C$ and $x$ and let $n_1,n_2,n_3$ be the result of applying Lemma~\ref{chainstructure} to $C$ and $y$.
Then
\[P\cong\begin{cases}
Ntri(m_1,n_1-m_1,n_2,n_3-m_3,m_3)
&m_1\leq n_1,\ m_3\leq n_3,\ x\text{ and }y\text{ incomparable}
\\Ntri(n_1,m_1-n_1,m_2,m_3-n_3,n_3)
&m_1\geq n_1,m_3\geq n_3,\ x\text{ and }y\text{ incomparable}
\\Xdis(m_1,n_1-m_1,m_1+m_2-n_1,m_3-n_3,n_3)
&m_1\leq n_1,m_3\geq n_3,\ x\text{ and }y\text{ incomparable}
\\Xdis(n_1,m_1-n_1,n_1+n_2-m_1,n_3-m_3,m_3)
&m_1\geq n_1,m_3\leq n_3,\ x\text{ and }y\text{ incomparable}
\\Xcon(m_1,n_1-m_1,m_1+m_2-n_1,m_3-n_3,n_3)
&m_1+m_2-n_1\geq0,\ x\leq y
\\Xcon(n_1,m_1-n_1,n_1+n_2-m_1,n_3-m_3,m_3)
&n_1+n_2-m_1\geq0,\ y\leq x
\\Dtri(m_1,m_2,n_1-m_1-m_2,n_2,n_3)
&n_1-m_1-m_2\geq0,\ x\leq y
\\Dtri(n_1,n_2,m_1-n_1-n_2,m_2,m_3)
&m_1-n_1-n_2\geq0,\ y\leq x
\end{cases}.\]
\end{proof}
The values of the invariants for the posets in Figure~\ref{fig:h2} are given in Table \ref{tab:dio} and the computation of these values can be found in the appendix.
The result of this table is that we can compute all doppelgangers among posets of height at most $|P|-2$ by solving various pairs of Diophantine equations. All pairs lead to infinite families of doppelgangers such as that depicted in Figure \ref{fig:h2_2}.
\begin{example}
In the below, we drop variables which do not appear in $e(P)$ or $F_P(2)$. For instance, Dtri$(m_1,m_2,m_3,m_4)$ becomes Dtri$(m_2,m_4)$. Further, variables are assumed to be constrained in such a manner that every $m_i$ is $>0$.
\begin{enumerate}
\item Dtri$(a(a+2),a-1)$ $\sim$ Ntri$(a^2 - 2,a)$.
\item Dtri$(4a, a-1)$ $\sim$ Xdis$(a-2,a,a)$.
\item Dtri$(5a-n+1,2a-n)$ $\sim$ Xcon$(a - n,2a,2a-n+1)$. 
\item Ntri$(3a - 5c,2c)$ $\sim$ Xdis$(3c-a,a,a-2c)$.
\item Ntri$(2d-b,b)$ $\sim$ Ntri$(2b-d,d)$.  \label{Ntri2}
\end{enumerate}
\end{example}
\begin{proposition}
Doppelgangers between Dtri and Dtri are fully classified by Corollary~\ref{opluscommute}.
\end{proposition}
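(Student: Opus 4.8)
The plan is to show that every $Dtri$ poset is a nontrivial ordinal sum of two triangles, and that the only way two such posets can be doppelgangers is by reordering their ordinal-sum factors---exactly the equivalences furnished by Corollary~\ref{opluscommute}.

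First I would record the structural fact that singles $Dtri$ out among the $\mathscr H_2$ families. In the $\mathscr H_2$ case analysis, $Dtri$ arises precisely when $x$ and $y$ are comparable (say $x\le y$) \emph{and} the interval on which $y$ dominates the chain clears the interval on which $x$ is incomparable to the chain, i.e. the inequality $n_1-m_1-m_2\ge 0$ that separates $Dtri$ from $Xcon$. Applying Lemma~\ref{chainstructure} to $x$ and to $y$ then produces a chain element lying above everything comparable to $x$ and below everything comparable to $y$, which is an ordinal cut. Carrying this out across all of $P$, I would establish the indecomposable decomposition
\[
Dtri(d_1,d_2,d_3,d_4,d_5)=C_{d_1}\oplus(C_{d_2}+C_1)\oplus C_{d_3}\oplus(C_{d_4}+C_1)\oplus C_{d_5},
\]
so that $Dtri$ is an ordinal sum of two \emph{fan} blocks $C_{d_2}+C_1$ and $C_{d_4}+C_1$ (each an $\mathscr H_1$ triangle $Tri(0,\cdot,0)$) interleaved with $d_1+d_3+d_5$ singletons $C_1$; equivalently $Dtri=A\oplus B$ with $A,B$ triangles. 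This is the case that is genuinely decomposable, which is why it (and not $Xcon$) should be accounted for by the ordinal-sum operation.

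For the sufficiency direction I would note that Corollary~\ref{opluscommute} together with Corollary~\ref{twoimplythird} lets me transpose any two adjacent factors of an ordinal sum while preserving $\sim$; iterating these adjacent swaps realizes every permutation of the factors in the decomposition above as a doppelganger. Since any arrangement of two fan blocks together with singletons is readily checked to again be of $Dtri$ form, the doppelgangers obtained in this way are exactly the $Dtri(d_1',\dots,d_5')$ with $\{d_2',d_4'\}=\{d_2,d_4\}$ and $d_1'+d_3'+d_5'=d_1+d_3+d_5$.

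It remains to prove these are \emph{all} the $Dtri$--$Dtri$ doppelgangers, and this completeness step is the crux. Here I would evaluate the invariants of Lemma~\ref{invariants} through the decomposition, using $e(C_m)=1$, $h(C_m)=m$, $F_{C_m}(2)=m+1$ and the ordinal and disjoint-union recurrences, to obtain $h=s+a+b$, $e=(a+1)(b+1)$, and $F_P(2)=s+2(a+b)+3$, where $a,b$ are the two fan widths and $s=d_1+d_3+d_5$ is the number of singleton factors (matching the values recorded in Table~\ref{tab:dio}). By Corollary~\ref{heightpm2}, two $Dtri$ posets are doppelgangers iff these three invariants agree. Fixing $h$, equality of $F_P(2)$ pins down $a+b$ and hence $s=h-(a+b)$, after which equality of $e$ pins down the product $ab$; a common sum and product force $\{a,b\}=\{a',b'\}$ as multisets. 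Thus the doppelganger condition coincides exactly with the factor-multiset condition realized in the previous paragraph, so every $Dtri$--$Dtri$ doppelganger pair arises from Corollary~\ref{opluscommute}. The only real work is the ordinal decomposition and the short sum-and-product argument; the invariant evaluations are routine and already tabulated.
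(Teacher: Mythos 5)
Your proposal is correct and its decisive step coincides with the paper's: equate the tabulated invariants $F_P(2)$ and $e(P)$ for two $Dtri$ posets of the same size and height, and observe that a common sum and product of the two fan widths forces them to agree as a multiset (the paper phrases this as $a=c+d-b$ giving $(b-c)(b-d)=0$, which is the same algebra). The explicit ordinal-sum decomposition and the sufficiency direction you spell out are left implicit in the paper but are consistent with it and add no new difficulty.
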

\begin{proof}
Consider Dtri$(a,b)$ and Dtri$(c,d)$. The equation for $F_P(2)$ gives $a = c + d - b$. Plugging this into our $e(P)$ equation gives $(b-c)(b-d) = 0$. Thus the only solutions are $b=c$ or $b=d$, both of which are given by Corollary~\ref{opluscommute}.
\end{proof}
\begin{proposition}
Doppelgangers between Ntri and Ntri are fully classified by Corollary~\ref{opluscommute} and Equation~\ref{Ntri2}.
\end{proposition}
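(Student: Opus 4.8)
The plan is to follow the template of the preceding Dtri--Dtri proposition: reduce the doppelganger condition to a pair of Diophantine equations via the invariants, then factor. By Corollary~\ref{heightpm2}, two posets of height $|P|-2$ satisfy $P\sim Q$ if and only if they agree on $|P|$, $h(P)$, $F_P(2)$, and $e(P)$. Restricting to pairs of Ntri posets of a common size and height, the first two invariants are fixed, so the content lies entirely in the two equations $F_P(2)=F_Q(2)$ and $e(P)=e(Q)$. I would parameterize the two posets in the reduced two-variable form from the example list, say Ntri$(a,b)$ and Ntri$(c,d)$, reading the invariant formulas off Table~\ref{tab:dio}.

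First I would use the $F_P(2)$ equation, which (as for Dtri) is linear in the parameters, to solve for one parameter in terms of the others, thereby eliminating a variable. Substituting this linear relation into the equation $e(\text{Ntri}(a,b))=e(\text{Ntri}(c,d))$ should, in direct analogy with the identity $(b-c)(b-d)=0$ obtained in the Dtri case, produce a polynomial in the remaining parameters that factors. The decisive step is the shape of this factorization: I expect exactly two irreducible factors, one of which corresponds to the parameter identification realized by the commutativity of ordinal sum (Corollary~\ref{opluscommute}), and the other of which reproduces precisely the relation underlying Equation~\ref{Ntri2}, namely Ntri$(2d-b,b)\sim$ Ntri$(2b-d,d)$.

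To conclude, I would confirm that each factor's solution set is accounted for by a known relation and that together they are exhaustive. For the first factor I would exhibit, on the corresponding parameter locus, an ordinal-sum decomposition of the Ntri posets that realizes the identification through Corollary~\ref{opluscommute}; for the second I would check that the parameter constraint is literally that of Equation~\ref{Ntri2}, already known to yield doppelgangers. Since Corollary~\ref{heightpm2} is an \emph{if and only if}, matching the invariants is both necessary and sufficient, so no solution outside these two factors can be a doppelganger and no listed solution fails to be one; this closes the classification.

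The hard part will be computing and factoring the linear-extension equation, together with correctly attributing the commutative factor. Because Ntri is genuinely non-series-parallel, $e(\text{Ntri})$ does not split as a product over ordinal-sum pieces the way $e(\text{Dtri})$ does, so it must be computed directly from how the two incomparable off-chain elements attach to the chain (the $a_i,b_i$ data of Lemma~\ref{lem:F_P} and the classification of $\mathscr{H}_2$); moreover I must locate the sub-locus of parameters on which the Ntri becomes ordinal-sum decomposable, since only there can Corollary~\ref{opluscommute} apply. Verifying that the resulting polynomial factors \emph{exactly} into this commutative locus and the Equation~\ref{Ntri2} family---with no spurious extra solutions, and with all the positivity and ordering constraints on the underlying $m_i$ and $n_i$ respected---is the crux of the argument.
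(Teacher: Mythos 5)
Your proposal follows essentially the same route as the paper: fix size and height, reduce to the two Diophantine equations $F_P(2)=F_Q(2)$ and $e(P)=e(Q)$ from Table~\ref{tab:dio}, eliminate a variable via the linear $F_P(2)$ relation ($c=a+3(b-d)$), and factor the resulting linear-extension equation into exactly the two loci $(b-d)(a+b-2d)=0$ corresponding to Corollary~\ref{opluscommute} and Equation~\ref{Ntri2}. One small correction to an aside: Ntri \emph{is} series-parallel (the paper computes its invariants via Lemma~\ref{invariants} for precisely this reason), so the worry about locating an ordinal-sum-decomposable sub-locus is moot, though this does not affect the argument.
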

\begin{proof}
Consider Ntri$(a,b)$ and Ntri$(c,d)$. The equation for $F_P(2)$ gives $c=a+3(b-d)$. Plugging this into our $e(P)$ equation gives
\[
(a+b+2)(b+1) = (a+3(b-d) + d + 2)(d + 1).
\]
Expanding and re-factoring gives $(b-d)(a+b-2d)=0$. There are two possible cases:
\begin{enumerate}
\item $b=d$: In this case, $c=a+3(b-d)=a$ which is given by Corollary \ref{opluscommute}.
\item $a+b=2d$: In this case $c=a+3(b-d)=2b-d$ and $a=2d-b$ which is given by Equation \ref{Ntri2}.

We have enumerated all possible solutions to the equation.
\end{enumerate}
\end{proof}
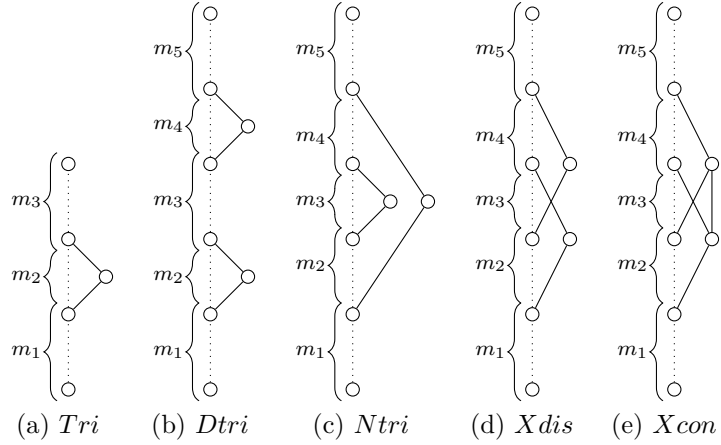
\begin{figure}
\begin{center}
\begin{tabular}{c c c c c}
\begin{tikzpicture}[scale=0.5]
\node[draw, circle, inner sep=0pt, minimum size=5pt] at (0,0) (a) {};
\node[draw, circle, inner sep=0pt, minimum size=5pt] at (0,2) (b) {};
\node[draw, circle, inner sep=0pt, minimum size=5pt] at (0,4) (c) {};
\node[draw, circle, inner sep=0pt, minimum size=5pt] at (0,6) (d) {};
\node[draw, circle, inner sep=0pt, minimum size=5pt] at (1,3) (e) {};
\draw[dotted] (a) -- (b) -- (c) -- (d);
\draw (b) -- (e) -- (c);
\draw [decorate,decoration={brace,amplitude=5pt}]
(-0.3,-0.3) -- (-0.3,2.3) node [black,midway,xshift=-0.4cm]
{\footnotesize $m_1$};
\draw [decorate,decoration={brace,amplitude=5pt}]
(-0.3,2.3) -- (-0.3,3.7) node [black,midway,xshift=-0.4cm]
{\footnotesize $m_2$};
\draw [decorate,decoration={brace,amplitude=5pt}]
(-0.3,3.7) -- (-0.3,6.3) node [black,midway,xshift=-0.4cm]
{\footnotesize $m_3$};
\end{tikzpicture}
&\begin{tikzpicture}[scale=0.5]
\node[draw, circle, inner sep=0pt, minimum size=5pt] at (0,0) (a) {};
\node[draw, circle, inner sep=0pt, minimum size=5pt] at (0,2) (b) {};
\node[draw, circle, inner sep=0pt, minimum size=5pt] at (0,4) (c) {};
\node[draw, circle, inner sep=0pt, minimum size=5pt] at (0,6) (d) {};
\node[draw, circle, inner sep=0pt, minimum size=5pt] at (0,8) (e) {};
\node[draw, circle, inner sep=0pt, minimum size=5pt] at (0,10) (f) {};
\node[draw, circle, inner sep=0pt, minimum size=5pt] at (1,3) (g) {};
\node[draw, circle, inner sep=0pt, minimum size=5pt] at (1,7) (h) {};
\draw[dotted] (a) -- (b) -- (c) -- (d) -- (e) -- (f);
\draw (b) -- (g) -- (c);
\draw (d) -- (h) -- (e);
\draw [decorate,decoration={brace,amplitude=5pt}]
(-0.3,-0.3) -- (-0.3,2.3) node [black,midway,xshift=-0.4cm]
{\footnotesize $m_1$};
\draw [decorate,decoration={brace,amplitude=5pt}]
(-0.3,2.3) -- (-0.3,3.7) node [black,midway,xshift=-0.4cm]
{\footnotesize $m_2$};
\draw [decorate,decoration={brace,amplitude=5pt}]
(-0.3,3.7) -- (-0.3,6.3) node [black,midway,xshift=-0.4cm]
{\footnotesize $m_3$};
\draw [decorate,decoration={brace,amplitude=5pt}]
(-0.3,6.3) -- (-0.3,7.7) node [black,midway,xshift=-0.4cm]
{\footnotesize $m_4$};
\draw [decorate,decoration={brace,amplitude=5pt}]
(-0.3,7.7) -- (-0.3,10.3) node [black,midway,xshift=-0.4cm]
{\footnotesize $m_5$};
\end{tikzpicture}
&\begin{tikzpicture}[scale=0.5]
\node[draw, circle, inner sep=0pt, minimum size=5pt] at (0,0) (a) {};
\node[draw, circle, inner sep=0pt, minimum size=5pt] at (0,2) (b) {};
\node[draw, circle, inner sep=0pt, minimum size=5pt] at (0,4) (c) {};
\node[draw, circle, inner sep=0pt, minimum size=5pt] at (0,6) (d) {};
\node[draw, circle, inner sep=0pt, minimum size=5pt] at (0,8) (e) {};
\node[draw, circle, inner sep=0pt, minimum size=5pt] at (0,10) (f) {};
\node[draw, circle, inner sep=0pt, minimum size=5pt] at (1,5) (g) {};
\node[draw, circle, inner sep=0pt, minimum size=5pt] at (2,5) (h) {};
\draw[dotted] (a) -- (b) -- (c) -- (d) -- (e) -- (f);
\draw (c) -- (g) -- (d);
\draw (b) -- (h) -- (e);
\draw [decorate,decoration={brace,amplitude=5pt}]
(-0.3,-0.3) -- (-0.3,2.3) node [black,midway,xshift=-0.4cm]
{\footnotesize $m_1$};
\draw [decorate,decoration={brace,amplitude=5pt}]
(-0.3,2.3) -- (-0.3,4.3) node [black,midway,xshift=-0.4cm]
{\footnotesize $m_2$};
\draw [decorate,decoration={brace,amplitude=5pt}]
(-0.3,4.3) -- (-0.3,5.7) node [black,midway,xshift=-0.4cm]
{\footnotesize $m_3$};
\draw [decorate,decoration={brace,amplitude=5pt}]
(-0.3,5.7) -- (-0.3,7.7) node [black,midway,xshift=-0.4cm]
{\footnotesize $m_4$};
\draw [decorate,decoration={brace,amplitude=5pt}]
(-0.3,7.7) -- (-0.3,10.3) node [black,midway,xshift=-0.4cm]
{\footnotesize $m_5$};
\end{tikzpicture}
&\begin{tikzpicture}[scale=0.5]
\node[draw, circle, inner sep=0pt, minimum size=5pt] at (0,0) (a) {};
\node[draw, circle, inner sep=0pt, minimum size=5pt] at (0,2) (b) {};
\node[draw, circle, inner sep=0pt, minimum size=5pt] at (0,4) (c) {};
\node[draw, circle, inner sep=0pt, minimum size=5pt] at (0,6) (d) {};
\node[draw, circle, inner sep=0pt, minimum size=5pt] at (0,8) (e) {};
\node[draw, circle, inner sep=0pt, minimum size=5pt] at (0,10) (f) {};
\node[draw, circle, inner sep=0pt, minimum size=5pt] at (1,4) (g) {};
\node[draw, circle, inner sep=0pt, minimum size=5pt] at (1,6) (h) {};
\draw[dotted] (a) -- (b) -- (c) -- (d) -- (e) -- (f);
\draw (b) -- (g) -- (d);
\draw (c) -- (h) -- (e);
\draw [decorate,decoration={brace,amplitude=5pt}]
(-0.3,-0.3) -- (-0.3,2.3) node [black,midway,xshift=-0.4cm]
{\footnotesize $m_1$};
\draw [decorate,decoration={brace,amplitude=5pt}]
(-0.3,2.3) -- (-0.3,4.3) node [black,midway,xshift=-0.4cm]
{\footnotesize $m_2$};
\draw [decorate,decoration={brace,amplitude=5pt}]
(-0.3,4.3) -- (-0.3,5.7) node [black,midway,xshift=-0.4cm]
{\footnotesize $m_3$};
\draw [decorate,decoration={brace,amplitude=5pt}]
(-0.3,5.7) -- (-0.3,7.7) node [black,midway,xshift=-0.4cm]
{\footnotesize $m_4$};
\draw [decorate,decoration={brace,amplitude=5pt}]
(-0.3,7.7) -- (-0.3,10.3) node [black,midway,xshift=-0.4cm]
{\footnotesize $m_5$};
\end{tikzpicture}
&\begin{tikzpicture}[scale=0.5]
\node[draw, circle, inner sep=0pt, minimum size=5pt] at (0,0) (a) {};
\node[draw, circle, inner sep=0pt, minimum size=5pt] at (0,2) (b) {};
\node[draw, circle, inner sep=0pt, minimum size=5pt] at (0,4) (c) {};
\node[draw, circle, inner sep=0pt, minimum size=5pt] at (0,6) (d) {};
\node[draw, circle, inner sep=0pt, minimum size=5pt] at (0,8) (e) {};
\node[draw, circle, inner sep=0pt, minimum size=5pt] at (0,10) (f) {};
\node[draw, circle, inner sep=0pt, minimum size=5pt] at (1,4) (g) {};
\node[draw, circle, inner sep=0pt, minimum size=5pt] at (1,6) (h) {};
\draw[dotted] (a) -- (b) -- (c) -- (d) -- (e) -- (f);
\draw (b) -- (g) -- (d);
\draw (c) -- (h) -- (e);
\draw (g) -- (h);
\draw [decorate,decoration={brace,amplitude=5pt}]
(-0.3,-0.3) -- (-0.3,2.3) node [black,midway,xshift=-0.4cm]
{\footnotesize $m_1$};
\draw [decorate,decoration={brace,amplitude=5pt}]
(-0.3,2.3) -- (-0.3,4.3) node [black,midway,xshift=-0.4cm]
{\footnotesize $m_2$};
\draw [decorate,decoration={brace,amplitude=5pt}]
(-0.3,4.3) -- (-0.3,5.7) node [black,midway,xshift=-0.4cm]
{\footnotesize $m_3$};
\draw [decorate,decoration={brace,amplitude=5pt}]
(-0.3,5.7) -- (-0.3,7.7) node [black,midway,xshift=-0.4cm]
{\footnotesize $m_4$};
\draw [decorate,decoration={brace,amplitude=5pt}]
(-0.3,7.7) -- (-0.3,10.3) node [black,midway,xshift=-0.4cm]
{\footnotesize $m_5$};
\end{tikzpicture}
\\(a) $Tri$
&(b) $Dtri$
&(c) $Ntri$
&(d) $Xdis$
&(e) $Xcon$
\end{tabular}
\end{center}
\caption{Five infinite families of posets.
\label{fig:h2}}
\end{figure}
\begin{table}
\centering%
\begin{tabular}{c|c|c}
Infinite Family&$e(P)$&$F_P(2)$\\
$Tri(m_2)$&$m_2+1$&$|P|+m_2+1$\\
$Dtri(m_2,m_4)$&$(m_2+1)(m_4+1)$&$|P|+m_2+m_4+1$\\
$Ntri(m_2+m_4,m_3)$&$(m_2+m_3+m_4+2)(m_3+1)$&$|P|+m_2+3m_3+m_4+2$\\
$Xdis(m_2,m_3,m_4)$&$(m_2+m_3+1)(m_3+m_4+1)+m_3+1$&$|P|+m_2+3m_3+m_4+2$\\
$Xcon(m_2,m_3,m_4)$&$(m_2+m_3+1)(m_3+m_4+1)-\frac{1}{2}m_3(m_3+1)$&$|P|+m_2+2m_3+m_4+1$
\end{tabular}
\caption{Values of $e(P)$ and $F_P(2)$ for the five infinite families in Figure 1.}
\label{tab:dio}
\end{table}
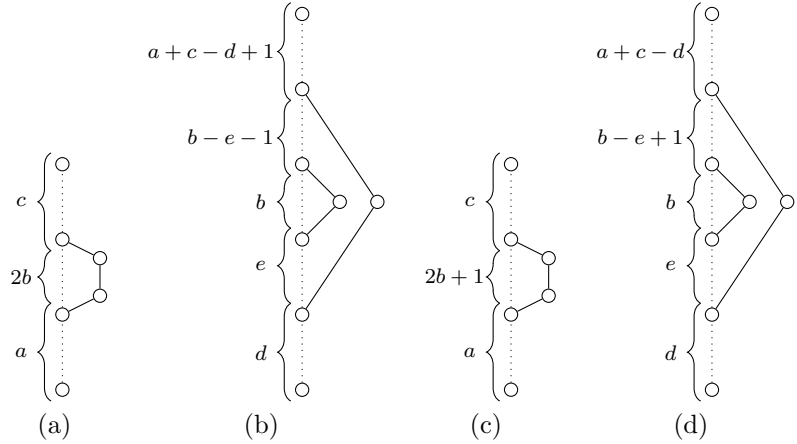
\begin{figure}[!htb]
\begin{center}
\begin{tabular}{c c c c c c}
\begin{tikzpicture}[scale=0.5]
\node[draw, circle, inner sep=0pt, minimum size=5pt] at (0,2) (b) {};
\node[draw, circle, inner sep=0pt, minimum size=5pt] at (0,4) (c) {};
\node[draw, circle, inner sep=0pt, minimum size=5pt] at (0,6) (d) {};
\node[draw, circle, inner sep=0pt, minimum size=5pt] at (0,8) (e) {};
\node[draw, circle, inner sep=0pt, minimum size=5pt] at (1,4.5) (g) {};
\node[draw, circle, inner sep=0pt, minimum size=5pt] at (1,5.5) (h) {};
\draw[dotted] (b) -- (c) -- (d) -- (e);
\draw (c) -- (g) -- (h) -- (d);
\draw [decorate,decoration={brace,amplitude=5pt}]
(-0.3,1.7) -- (-0.3,4.3) node [black,midway,xshift=-0.4cm]
{\footnotesize $a$};
\draw [decorate,decoration={brace,amplitude=5pt}]
(-0.3,4.3) -- (-0.3,5.7) node [black,midway,xshift=-0.4cm]
{\footnotesize $2b$};
\draw [decorate,decoration={brace,amplitude=5pt}]
(-0.3,5.7) -- (-0.3,8.3) node [black,midway,xshift=-0.4cm]
{\footnotesize $c$};
\end{tikzpicture}
&\begin{tikzpicture}[scale=0.5]
\node[draw, circle, inner sep=0pt, minimum size=5pt] at (0,0) (a) {};
\node[draw, circle, inner sep=0pt, minimum size=5pt] at (0,2) (b) {};
\node[draw, circle, inner sep=0pt, minimum size=5pt] at (0,4) (c) {};
\node[draw, circle, inner sep=0pt, minimum size=5pt] at (0,6) (d) {};
\node[draw, circle, inner sep=0pt, minimum size=5pt] at (0,8) (e) {};
\node[draw, circle, inner sep=0pt, minimum size=5pt] at (0,10) (f) {};
\node[draw, circle, inner sep=0pt, minimum size=5pt] at (1,5) (g) {};
\node[draw, circle, inner sep=0pt, minimum size=5pt] at (2,5) (h) {};
\draw[dotted] (a) -- (b) -- (c) -- (d) -- (e) -- (f);
\draw (c) -- (g) -- (d);
\draw (b) -- (h) -- (e);
\draw [decorate,decoration={brace,amplitude=5pt}]
(-0.3,-0.3) -- (-0.3,2.3) node [black,midway,xshift=-0.4cm]
{\footnotesize $d$};
\draw [decorate,decoration={brace,amplitude=5pt}]
(-0.3,2.3) -- (-0.3,4.3) node [black,midway,xshift=-0.4cm]
{\footnotesize $e$};
\draw [decorate,decoration={brace,amplitude=5pt}]
(-0.3,4.3) -- (-0.3,5.7) node [black,midway,xshift=-0.4cm]
{\footnotesize $b$};
\draw [decorate,decoration={brace,amplitude=5pt}]
(-0.3,5.7) -- (-0.3,7.7) node [black,midway,xshift=-0.8cm]
{\footnotesize $b-e-1$};
\draw [decorate,decoration={brace,amplitude=5pt}]
(-0.3,7.7) -- (-0.3,10.3) node [black,midway,xshift=-1.06cm]
{\footnotesize $a+c-d+1$};
\end{tikzpicture}
&\begin{tikzpicture}[scale=0.5]
\node[draw, circle, inner sep=0pt, minimum size=5pt] at (0,2) (b) {};
\node[draw, circle, inner sep=0pt, minimum size=5pt] at (0,4) (c) {};
\node[draw, circle, inner sep=0pt, minimum size=5pt] at (0,6) (d) {};
\node[draw, circle, inner sep=0pt, minimum size=5pt] at (0,8) (e) {};
\node[draw, circle, inner sep=0pt, minimum size=5pt] at (1,4.5) (g) {};
\node[draw, circle, inner sep=0pt, minimum size=5pt] at (1,5.5) (h) {};
\draw[dotted] (b) -- (c) -- (d) -- (e);
\draw (c) -- (g) -- (h) -- (d);
\draw [decorate,decoration={brace,amplitude=5pt}]
(-0.3,1.7) -- (-0.3,4.3) node [black,midway,xshift=-0.4cm]
{\footnotesize $a$};
\draw [decorate,decoration={brace,amplitude=5pt}]
(-0.3,4.3) -- (-0.3,5.7) node [black,midway,xshift=-0.6cm]
{\footnotesize $2b + 1$};
\draw [decorate,decoration={brace,amplitude=5pt}]
(-0.3,5.7) -- (-0.3,8.3) node [black,midway,xshift=-0.4cm]
{\footnotesize $c$};
\end{tikzpicture}
&\begin{tikzpicture}[scale=0.5]
\node[draw, circle, inner sep=0pt, minimum size=5pt] at (0,0) (a) {};
\node[draw, circle, inner sep=0pt, minimum size=5pt] at (0,2) (b) {};
\node[draw, circle, inner sep=0pt, minimum size=5pt] at (0,4) (c) {};
\node[draw, circle, inner sep=0pt, minimum size=5pt] at (0,6) (d) {};
\node[draw, circle, inner sep=0pt, minimum size=5pt] at (0,8) (e) {};
\node[draw, circle, inner sep=0pt, minimum size=5pt] at (0,10) (f) {};
\node[draw, circle, inner sep=0pt, minimum size=5pt] at (1,5) (g) {};
\node[draw, circle, inner sep=0pt, minimum size=5pt] at (2,5) (h) {};
\draw[dotted] (a) -- (b) -- (c) -- (d) -- (e) -- (f);
\draw (c) -- (g) -- (d);
\draw (b) -- (h) -- (e);
\draw [decorate,decoration={brace,amplitude=5pt}]
(-0.3,-0.3) -- (-0.3,2.3) node [black,midway,xshift=-0.4cm]
{\footnotesize $d$};
\draw [decorate,decoration={brace,amplitude=5pt}]
(-0.3,2.3) -- (-0.3,4.3) node [black,midway,xshift=-0.4cm]
{\footnotesize $e$};
\draw [decorate,decoration={brace,amplitude=5pt}]
(-0.3,4.3) -- (-0.3,5.7) node [black,midway,xshift=-0.4cm]
{\footnotesize $b$};
\draw [decorate,decoration={brace,amplitude=5pt}]
(-0.3,5.7) -- (-0.3,7.7) node [black,midway,xshift=-0.8cm]
{\footnotesize $b-e+1$};
\draw [decorate,decoration={brace,amplitude=5pt}]
(-0.3,7.7) -- (-0.3,10.3) node [black,midway,xshift=-0.8cm]
{\footnotesize $a+c-d$};
\end{tikzpicture}
\\(a)
&(b)
&(c)
&(d)
\end{tabular}
\end{center}
\caption{Two Infinite Families of Doppelgangers which follow from Table 1: (a)$\sim$(b) and (c)$\sim$(d)}
\label{fig:h2_2}
\end{figure}
\section{Further directions}
\subsection{The Multivariate Generating Function}
Perhaps the most obvious extension of this paper would be to more carefully investigate the implications of both the proper and improper recurrence relations on the multivariate generating function. Induction on incomparable elements led to a number of nice results over order polynomials, and such a tool could potentially be used to approach some of the questions offered in the end of McNamara and Ward's paper \cite{McNamara}.
\subsection{Single Step Chain Decomposition}
We briefly touched on how our recurrences may be useful, even in only a single application, both in explaining one of McNamara and Ward's posets as well as easily constructing an infinite family of doppelgangers for $C_n + C_n$. What other dopplegangers can be explained through a single set of chain decomposition? $C_n + C_n$ has high structural symmetry and simplicity. In a similar vein, Stanley suggested classifying doppelgangers which cannot be shown in a single step of a recurrence.
\subsection{Closed Families}
Our analysis of $\mathscr H_k$ initially stemmed from the fact that the family is closed under Johnson's recurrence. Further, series-parallel posets are closed under recurrence as well (for the correct choice of incomparable elements). This closure allows for easy recursive calculation of important invariants, and it is no coincidence that our results focus on these families. In fact, Faigle and Schrader's $\mathscr W_k$ is also such a closed family. However, the decomposition is complicated, and width is not an invariant on the order polynomial. However, the idea could carry over to the multivariate generating function where width and height do create invariants on naturally labeled posets.
\subsection{Explaining Non-Series Parallel Doppelgangers with the Ur-Decomposition}
Since the Ur-Decomposition generalized the series-parallel decomposition and exists for all posets, one would hope that the Ur-Decomposition could be used to prove some of the non-series parallel doppelgangers produced in \cite{Hamaker}.
However, many of the posets considered in \cite{Hamaker} are grid-like and do not decompose well under the Ur-Decomposition.
For example, the grid poset $C_n\times C_m$ decomposes as $C_1\oplus P\oplus C_1$ where $P$ is indecomposable under the Ur-operation (this can be seen by noting that every RAP, Definition \ref{RAP}, is a singleton).
\section{Appendix}
\subsection{Computation of Invariants for Posets of large height}
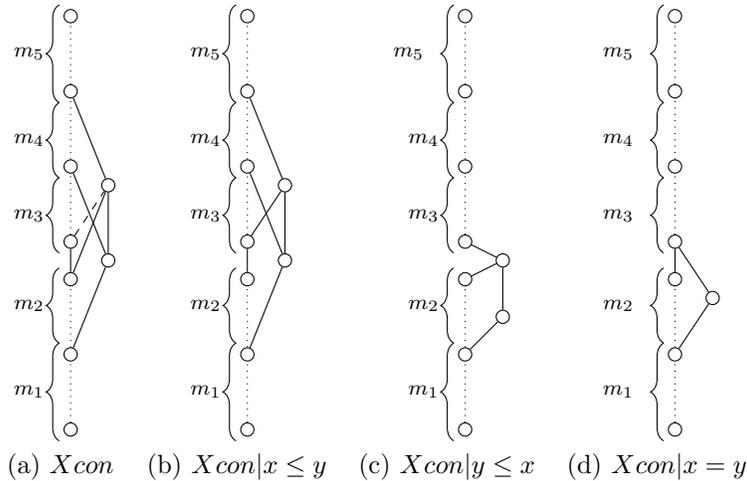
\begin{figure}[htb]
\begin{center}
\begin{tabular}{c c c c}
\begin{tikzpicture}[scale=0.5]
\node[draw, circle, inner sep=0pt, minimum size=5pt] at (0,0) (a) {};
\node[draw, circle, inner sep=0pt, minimum size=5pt] at (0,2) (b) {};
\node[draw, circle, inner sep=0pt, minimum size=5pt] at (0,4) (c) {};
\node[draw, circle, inner sep=0pt, minimum size=5pt] at (0,5) (d) {};
\node[draw, circle, inner sep=0pt, minimum size=5pt] at (0,7) (e) {};
\node[draw, circle, inner sep=0pt, minimum size=5pt] at (0,9) (f) {};
\node[draw, circle, inner sep=0pt, minimum size=5pt] at (0,11) (g) {};
\node[draw, circle, inner sep=0pt, minimum size=5pt] at (1,4.5) (h) {};
\node[draw, circle, inner sep=0pt, minimum size=5pt] at (1,6.5) (i) {};
\draw[dotted] (a) -- (b) -- (c);
\draw (c) -- (d);
\draw[dotted] (d) -- (e) -- (f) -- (g);
\draw[dashed] (d) -- (i);
\draw (b) -- (h) -- (e);
\draw (c) -- (i) -- (f);
\draw (h) -- (i);
\draw [decorate,decoration={brace,amplitude=5pt}]
(-0.3,-0.3) -- (-0.3,2.3) node [black,midway,xshift=-0.4cm]
{\footnotesize $m_1$};
\draw [decorate,decoration={brace,amplitude=5pt}]
(-0.3,2.3) -- (-0.3,4.3) node [black,midway,xshift=-0.4cm]
{\footnotesize $m_2$};
\draw [decorate,decoration={brace,amplitude=5pt}]
(-0.3,4.7) -- (-0.3,6.7) node [black,midway,xshift=-0.4cm]
{\footnotesize $m_3$};
\draw [decorate,decoration={brace,amplitude=5pt}]
(-0.3,6.7) -- (-0.3,8.7) node [black,midway,xshift=-0.4cm]
{\footnotesize $m_4$};
\draw [decorate,decoration={brace,amplitude=5pt}]
(-0.3,8.7) -- (-0.3,11.3) node [black,midway,xshift=-0.4cm]
{\footnotesize $m_5$};
\end{tikzpicture}
&\begin{tikzpicture}[scale=0.5]
\node[draw, circle, inner sep=0pt, minimum size=5pt] at (0,0) (a) {};
\node[draw, circle, inner sep=0pt, minimum size=5pt] at (0,2) (b) {};
\node[draw, circle, inner sep=0pt, minimum size=5pt] at (0,4) (c) {};
\node[draw, circle, inner sep=0pt, minimum size=5pt] at (0,5) (d) {};
\node[draw, circle, inner sep=0pt, minimum size=5pt] at (0,7) (e) {};
\node[draw, circle, inner sep=0pt, minimum size=5pt] at (0,9) (f) {};
\node[draw, circle, inner sep=0pt, minimum size=5pt] at (0,11) (g) {};
\node[draw, circle, inner sep=0pt, minimum size=5pt] at (1,4.5) (h) {};
\node[draw, circle, inner sep=0pt, minimum size=5pt] at (1,6.5) (i) {};
\draw[dotted] (a) -- (b) -- (c);
\draw (c) -- (d) -- (i);
\draw[dotted] (d) -- (e) -- (f) -- (g);
\draw (b) -- (h) -- (e);
\draw (h) -- (i) -- (f);
\draw [decorate,decoration={brace,amplitude=5pt}]
(-0.3,-0.3) -- (-0.3,2.3) node [black,midway,xshift=-0.4cm]
{\footnotesize $m_1$};
\draw [decorate,decoration={brace,amplitude=5pt}]
(-0.3,2.3) -- (-0.3,4.3) node [black,midway,xshift=-0.4cm]
{\footnotesize $m_2$};
\draw [decorate,decoration={brace,amplitude=5pt}]
(-0.3,4.7) -- (-0.3,6.7) node [black,midway,xshift=-0.4cm]
{\footnotesize $m_3$};
\draw [decorate,decoration={brace,amplitude=5pt}]
(-0.3,6.7) -- (-0.3,8.7) node [black,midway,xshift=-0.4cm]
{\footnotesize $m_4$};
\draw [decorate,decoration={brace,amplitude=5pt}]
(-0.3,8.7) -- (-0.3,11.3) node [black,midway,xshift=-0.4cm]
{\footnotesize $m_5$};
\end{tikzpicture}
&\begin{tikzpicture}[scale=0.5]
\node[draw, circle, inner sep=0pt, minimum size=5pt] at (0,0) (a) {};
\node[draw, circle, inner sep=0pt, minimum size=5pt] at (0,2) (b) {};
\node[draw, circle, inner sep=0pt, minimum size=5pt] at (0,4) (c) {};
\node[draw, circle, inner sep=0pt, minimum size=5pt] at (0,5) (d) {};
\node[draw, circle, inner sep=0pt, minimum size=5pt] at (0,7) (e) {};
\node[draw, circle, inner sep=0pt, minimum size=5pt] at (0,9) (f) {};
\node[draw, circle, inner sep=0pt, minimum size=5pt] at (0,11) (g) {};
\node[draw, circle, inner sep=0pt, minimum size=5pt] at (1,3) (h) {};
\node[draw, circle, inner sep=0pt, minimum size=5pt] at (1,4.5) (i) {};
\draw[dotted] (a) -- (b) -- (c);
\draw[dotted] (d) -- (e) -- (f) -- (g);
\draw (d) -- (i);
\draw (b) -- (h) -- (i) -- (c);
\draw [decorate,decoration={brace,amplitude=5pt}]
(-0.3,-0.3) -- (-0.3,2.3) node [black,midway,xshift=-0.4cm]
{\footnotesize $m_1$};
\draw [decorate,decoration={brace,amplitude=5pt}]
(-0.3,2.3) -- (-0.3,4.3) node [black,midway,xshift=-0.4cm]
{\footnotesize $m_2$};
\draw [decorate,decoration={brace,amplitude=5pt}]
(-0.3,4.7) -- (-0.3,6.7) node [black,midway,xshift=-0.4cm]
{\footnotesize $m_3$};
\draw [decorate,decoration={brace,amplitude=5pt}]
(-0.3,6.7) -- (-0.3,8.7) node [black,midway,xshift=-0.4cm]
{\footnotesize $m_4$};
\draw [decorate,decoration={brace,amplitude=5pt}]
(-0.3,8.7) -- (-0.3,11.3) node [black,midway,xshift=-0.6cm]
{\footnotesize $m_5$};
\end{tikzpicture}
&\begin{tikzpicture}[scale=0.5]
\node[draw, circle, inner sep=0pt, minimum size=5pt] at (0,0) (a) {};
\node[draw, circle, inner sep=0pt, minimum size=5pt] at (0,2) (b) {};
\node[draw, circle, inner sep=0pt, minimum size=5pt] at (0,4) (c) {};
\node[draw, circle, inner sep=0pt, minimum size=5pt] at (0,5) (d) {};
\node[draw, circle, inner sep=0pt, minimum size=5pt] at (0,7) (e) {};
\node[draw, circle, inner sep=0pt, minimum size=5pt] at (0,9) (f) {};
\node[draw, circle, inner sep=0pt, minimum size=5pt] at (0,11) (g) {};
\node[draw, circle, inner sep=0pt, minimum size=5pt] at (1,3.5) (h) {};
\draw[dotted] (a) -- (b) -- (c);
\draw (c) -- (d);
\draw[dotted] (d) -- (e) -- (f) -- (g);
\draw (b) -- (h) -- (d);
\draw [decorate,decoration={brace,amplitude=5pt}]
(-0.3,-0.3) -- (-0.3,2.3) node [black,midway,xshift=-0.6cm]
{\footnotesize $m_1$};
\draw [decorate,decoration={brace,amplitude=5pt}]
(-0.3,2.3) -- (-0.3,4.3) node [black,midway,xshift=-0.6cm]
{\footnotesize $m_2$};
\draw [decorate,decoration={brace,amplitude=5pt}]
(-0.3,4.7) -- (-0.3,6.7) node [black,midway,xshift=-0.6cm]
{\footnotesize $m_3$};
\draw [decorate,decoration={brace,amplitude=5pt}]
(-0.3,6.7) -- (-0.3,8.7) node [black,midway,xshift=-0.6cm]
{\footnotesize $m_4$};
\draw [decorate,decoration={brace,amplitude=5pt}]
(-0.3,8.7) -- (-0.3,11.3) node [black,midway,xshift=-0.6cm]
{\footnotesize $m_5$};
\end{tikzpicture}
\\(a) $Xcon$
&(b) $Xcon|x\leq y$
&(c) $Xcon|y\leq x$
&(d) $Xcon|x=y$
\end{tabular}
\end{center}
\caption{Chain decomposition of $Xcon$}
\end{figure}
\begin{figure}[htb]
\begin{center}
\begin{tabular}{c c c c}

\begin{tikzpicture}[scale=0.5]
\node[draw, circle, inner sep=0pt, minimum size=5pt] at (0,0) (a) {};
\node[draw, circle, inner sep=0pt, minimum size=5pt] at (0,2) (b) {};
\node[draw, circle, inner sep=0pt, minimum size=5pt] at (0,4) (c) {};
\node[draw, circle, inner sep=0pt, minimum size=5pt] at (0,6) (d) {};
\node[draw, circle, inner sep=0pt, minimum size=5pt] at (0,8) (e) {};
\node[draw, circle, inner sep=0pt, minimum size=5pt] at (0,10) (f) {};
\node[draw, circle, inner sep=0pt, minimum size=5pt] at (1,4) (g) {};
\node[draw, circle, inner sep=0pt, minimum size=5pt] at (1,6) (h) {};
\draw[dotted] (a) -- (b) -- (c) -- (d) -- (e) -- (f);
\draw (b) -- (g) -- (d);
\draw (c) -- (h) -- (e);
\draw[dashed] (g) -- (h);
\draw [decorate,decoration={brace,amplitude=5pt}]
(-0.3,-0.3) -- (-0.3,2.3) node [black,midway,xshift=-0.6cm]
{\footnotesize $m_1$};
\draw [decorate,decoration={brace,amplitude=5pt}]
(-0.3,2.3) -- (-0.3,4.3) node [black,midway,xshift=-0.6cm]
{\footnotesize $m_2$};
\draw [decorate,decoration={brace,amplitude=5pt}]
(-0.3,4.3) -- (-0.3,5.7) node [black,midway,xshift=-0.6cm]
{\footnotesize $m_3$};
\draw [decorate,decoration={brace,amplitude=5pt}]
(-0.3,5.7) -- (-0.3,7.7) node [black,midway,xshift=-0.6cm]
{\footnotesize $m_4$};
\draw [decorate,decoration={brace,amplitude=5pt}]
(-0.3,7.7) -- (-0.3,10.3) node [black,midway,xshift=-0.6cm]
{\footnotesize $m_5$};
\end{tikzpicture}
&\begin{tikzpicture}[scale=0.5]
\node[draw, circle, inner sep=0pt, minimum size=5pt] at (0,0) (a) {};
\node[draw, circle, inner sep=0pt, minimum size=5pt] at (0,2) (b) {};
\node[draw, circle, inner sep=0pt, minimum size=5pt] at (0,4) (c) {};
\node[draw, circle, inner sep=0pt, minimum size=5pt] at (0,6) (d) {};
\node[draw, circle, inner sep=0pt, minimum size=5pt] at (0,8) (e) {};
\node[draw, circle, inner sep=0pt, minimum size=5pt] at (0,10) (f) {};
\node[draw, circle, inner sep=0pt, minimum size=5pt] at (1,4) (g) {};
\node[draw, circle, inner sep=0pt, minimum size=5pt] at (1,6) (h) {};
\draw[dotted] (a) -- (b) -- (c) -- (d) -- (e) -- (f);
\draw (b) -- (g) -- (d);
\draw (c) -- (h) -- (e);
\draw (g) -- (h);
\draw [decorate,decoration={brace,amplitude=5pt}]
(-0.3,-0.3) -- (-0.3,2.3) node [black,midway,xshift=-0.6cm]
{\footnotesize $m_1$};
\draw [decorate,decoration={brace,amplitude=5pt}]
(-0.3,2.3) -- (-0.3,4.3) node [black,midway,xshift=-0.6cm]
{\footnotesize $m_2$};
\draw [decorate,decoration={brace,amplitude=5pt}]
(-0.3,4.3) -- (-0.3,5.7) node [black,midway,xshift=-0.6cm]
{\footnotesize $m_3$};
\draw [decorate,decoration={brace,amplitude=5pt}]
(-0.3,5.7) -- (-0.3,7.7) node [black,midway,xshift=-0.6cm]
{\footnotesize $m_4$};
\draw [decorate,decoration={brace,amplitude=5pt}]
(-0.3,7.7) -- (-0.3,10.3) node [black,midway,xshift=-0.6cm]
{\footnotesize $m_5$};
\end{tikzpicture}
&\begin{tikzpicture}[scale=0.5]
\node[draw, circle, inner sep=0pt, minimum size=5pt] at (0,0) (a) {};
\node[draw, circle, inner sep=0pt, minimum size=5pt] at (0,2) (b) {};
\node[draw, circle, inner sep=0pt, minimum size=5pt] at (0,4) (c) {};
\node[draw, circle, inner sep=0pt, minimum size=5pt] at (0,6) (d) {};
\node[draw, circle, inner sep=0pt, minimum size=5pt] at (0,8) (e) {};
\node[draw, circle, inner sep=0pt, minimum size=5pt] at (0,10) (f) {};
\node[draw, circle, inner sep=0pt, minimum size=5pt] at (1,4.5) (g) {};
\node[draw, circle, inner sep=0pt, minimum size=5pt] at (1,5.5) (h) {};
\draw[dotted] (a) -- (b) -- (c) -- (d) -- (e) -- (f);
\draw (c) -- (g) -- (h) -- (d);
\draw [decorate,decoration={brace,amplitude=5pt}]
(-0.3,-0.3) -- (-0.3,2.3) node [black,midway,xshift=-0.6cm]
{\footnotesize $m_1$};
\draw [decorate,decoration={brace,amplitude=5pt}]
(-0.3,2.3) -- (-0.3,4.3) node [black,midway,xshift=-0.6cm]
{\footnotesize $m_2$};
\draw [decorate,decoration={brace,amplitude=5pt}]
(-0.3,4.3) -- (-0.3,5.7) node [black,midway,xshift=-0.6cm]
{\footnotesize $m_3$};
\draw [decorate,decoration={brace,amplitude=5pt}]
(-0.3,5.7) -- (-0.3,7.7) node [black,midway,xshift=-0.6cm]
{\footnotesize $m_4$};
\draw [decorate,decoration={brace,amplitude=5pt}]
(-0.3,7.7) -- (-0.3,10.3) node [black,midway,xshift=-0.6cm]
{\footnotesize $m_5$};
\end{tikzpicture}
&\begin{tikzpicture}[scale=0.5]
\node[draw, circle, inner sep=0pt, minimum size=5pt] at (0,0) (a) {};
\node[draw, circle, inner sep=0pt, minimum size=5pt] at (0,2) (b) {};
\node[draw, circle, inner sep=0pt, minimum size=5pt] at (0,4) (c) {};
\node[draw, circle, inner sep=0pt, minimum size=5pt] at (0,6) (d) {};
\node[draw, circle, inner sep=0pt, minimum size=5pt] at (0,8) (e) {};
\node[draw, circle, inner sep=0pt, minimum size=5pt] at (0,10) (f) {};
\node[draw, circle, inner sep=0pt, minimum size=5pt] at (1,5) (g) {};
\draw[dotted] (a) -- (b) -- (c) -- (d) -- (e) -- (f);
\draw (c) -- (g) -- (d);
\draw [decorate,decoration={brace,amplitude=5pt}]
(-0.3,-0.3) -- (-0.3,2.3) node [black,midway,xshift=-0.6cm]
{\footnotesize $m_1$};
\draw [decorate,decoration={brace,amplitude=5pt}]
(-0.3,2.3) -- (-0.3,4.3) node [black,midway,xshift=-0.6cm]
{\footnotesize $m_2$};
\draw [decorate,decoration={brace,amplitude=5pt}]
(-0.3,4.3) -- (-0.3,5.7) node [black,midway,xshift=-0.6cm]
{\footnotesize $m_3$};
\draw [decorate,decoration={brace,amplitude=5pt}]
(-0.3,5.7) -- (-0.3,7.7) node [black,midway,xshift=-0.6cm]
{\footnotesize $m_4$};
\draw [decorate,decoration={brace,amplitude=5pt}]
(-0.3,7.7) -- (-0.3,10.3) node [black,midway,xshift=-0.6cm]
{\footnotesize $m_5$};
\end{tikzpicture}
\\(a) $Xdis$
&(b) $Xdis|x\leq y$
&(c) $Xdis|y\leq x$
&(d) $Xdis|x=y$
\end{tabular}
\end{center}
\caption{Chain decomposition of $Xdis$}
\end{figure}
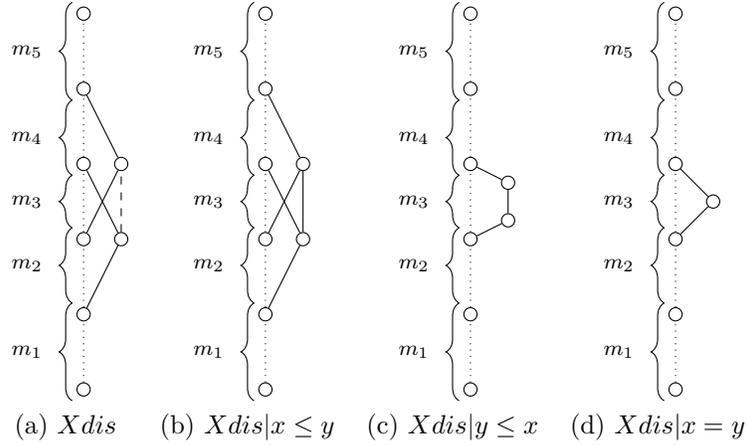
\begin{proposition}
The values of $e(P)$ and $F_P(2)$ of the posets depicted in Figure 4 are given by Table 1.
\end{proposition}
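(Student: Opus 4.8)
The plan is to treat the two tabulated quantities by separate combinatorial interpretations, exploiting the fact that every poset in these families is a maximal chain $x_1 \le \cdots \le x_{h(P)}$ together with one or two off-chain elements whose comparabilities to the chain are recorded by the splitting in Lemma~\ref{chainstructure}. For $F_P(2)$ I would use the interpretation of $F_P(2)$ as the number of order-preserving maps $P \to \{1,2\}$, equivalently the number of up-sets (order filters) of $P$, since such a map is determined by the up-set $f^{-1}(2)$. For $e(P)$ I would count linear extensions by fixing the forced total order on the chain and then counting the ways to interleave the off-chain element(s) into the chain subject to their comparabilities, remembering that two off-chain elements may also have to be ordered relative to one another.

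First I would dispatch the base family $Tri(m_1,m_2,m_3)$, whose single off-chain element $x$ lies above the bottom $m_1$ chain elements, is incomparable to the middle $m_2$, and lies below the top $m_3$. Counting the insertion slots for $x$ inside its incomparable band gives $e(Tri)=m_2+1$, and a two-case split according to whether $x$ belongs to the up-set gives $F_{Tri}(2)=(m_2+m_3+1)+(m_1+m_2+1)=|P|+m_2+1$, matching Table~\ref{tab:dio}. Next I would handle $Dtri$, which decomposes as an ordinal sum of two $Tri$-type blocks separated by a chain; here the recursions $e(P\oplus Q)=e(P)e(Q)$ and $F_{P\oplus Q}(2)=F_P(2)+F_Q(2)-1$ from Lemma~\ref{invariants} immediately yield $e(Dtri)=(m_2+1)(m_4+1)$ and the stated value of $F_{Dtri}(2)$.

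The remaining families $Ntri$, $Xdis$, and $Xcon$ have two off-chain elements whose incomparability bands overlap, so they are not ordinal sums. I would compute $Ntri$ directly, tracking the relative order of its two (incomparable) off-chain elements when interleaving them into the chain and the consistency of the up-set across them. For $Xdis$ and $Xcon$ I would instead apply Johnson's recurrence $F_P=F_{P|x\le y}+F_{P|y\le x}-F_{P|x=y}$ to the off-chain pair (these are exactly the chain decompositions drawn in the appendix figures), together with the parallel additive recurrence $e(P)=e(P|x\le y)+e(P|y\le x)$ for linear extensions, reducing each poset to the already-computed $Tri$, $Dtri$, and $Ntri$ building blocks; specializing the first recurrence at $m=2$ then produces $F_P(2)$.

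The main obstacle will be the overlapping and, in $Xcon$, comparable off-chain elements. Because the two off-chain elements of $Xcon$ are related, the linear-extension count must exclude the interleavings that place them in the forbidden order, and this is precisely the source of the quadratic correction $-\tfrac12 m_3(m_3+1)$ in the tabulated value of $e(Xcon)$, in contrast to the additive $+m_3+1$ term for the incomparable $Xdis$. Pinning down this correction — whether by carefully enumerating the bad interleavings or by checking that the three recurrence terms combine correctly at the two-element level — is the delicate step; the rest reduces to routine stars-and-bars counts and ordinal-sum bookkeeping.
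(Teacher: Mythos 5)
Your overall strategy coincides with the paper's: the paper computes $F_P(2)$ by choosing the values of $f$ on the off-chain elements and $e(P)$ by choosing their insertion positions into the chain, notes that $Tri$, $Dtri$, $Ntri$ are series-parallel so that Lemma~\ref{invariants} applies, and offers the recurrence reduction of $Xcon$ and $Xdis$ (Figures 7 and 8) as the alternative route for the two non-series-parallel families. Your $Tri$ and $Dtri$ computations are correct and match Table~\ref{tab:dio}, and your identification of the $Xcon$ correction term as coming from forbidden interleavings is exactly right.

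The one step that would fail as written is applying Johnson's recurrence ``to the off-chain pair'' of both $Xdis$ and $Xcon$. In $Xcon$ the two off-chain elements are already comparable, so there is no incomparable pair to split on there; the paper's Figure 7 instead splits $Xcon$ on an incomparable pair consisting of the \emph{upper off-chain element and a chain element}, which reduces it to series-parallel posets. Relatedly, splitting $Xdis$ on its off-chain pair produces $Xdis|x\leq y\cong Xcon$ as one of the three terms, so this does not reduce $Xdis$ to your ``already-computed $Tri$, $Dtri$, and $Ntri$ building blocks'': you must compute $Xcon$ first, by the different split just described, and only then reduce $Xdis$. Your fallback of directly enumerating interleavings is exactly the computation the paper carries out (for $Xdis$, $(m_2+m_3+1)(m_3+m_4+1)$ choices plus a boundary case; for $Xcon$, the same count minus the placements violating $y\leq x$, whence the $-\tfrac{1}{2}m_3(m_3+1)$), so the plan is repairable, but the choice of incomparable pair for $Xcon$ and the order of reduction need to be corrected.
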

\begin{proof}
The values of the invariants can be computed in two different ways. For simple posets such as the posets depicted in Figure 4, the values of the invariants can be computed combinatorially. For more complicated examples where the values of the invariants cannot be computed combinatorially, a combination of recurrence relations and the formulas in Lemma~\ref{invariants} will still work.
For example, the Tri, Dtri, and Ntri posets are all series parallel and so Lemma~\ref{invariants} can be used to determine the values of the invariants.
The Xcon posets can be inductively reduced (see Figure 7) using the recurrence to previously computed examples and then the Xdis poset can also be reduced (see Figure 8) to previously computed examples.
This demonstrates the utility of working with a family of posets that is closed under the recurrence. We now give the combinatorial computation of the values of the invariants for the posets depicted in Figure 4.

The number of order-preserving maps $f\colon P\to\{1,2\}$ can be determined by first choosing the values of $f$ on the elements not on the maximal chain.
Then the computation of the values of $F_P(2)$ is straightforward.
To compute the values of $e(P)$, it suffices to choose the values of the elements not on the maximal chain.
This is straightfoward for the $Tri$, $Dtri$, and $Ntri$ families.
For the $Xdis$ and $Xcon$ families, let $x$ be the larger element not on the maximal chain and let $y$ be the smaller element not on the maximal chain.
Firstly, consider the $Xdis$ case.
For $x\in\{m_1+1,\ldots,m_1+m_2+m_3+1\}$ there are $m_3+m_4+1$ possible choices for $y$ and for $x=m_1+m_2+m_3+2$ there are $m_3+1$ choices for $y$.
Secondly, consider the $Xcon$ case which is given by taking the previous case an setting $x\geq y$.
This eliminates the $\binom{m_3}{2}$ ways to choose $x,y\in\{m_1+m_2+2,\ldots,m_1+m_2+m_3+1\}$ with $y\geq x$ and the $m_3+1$ ways to choose $y$ with $x=m_1+m_2+m_3+2$. We leave it to the reader to fill in further details.
\end{proof}
\subsection{Ur-Decomposition}
Every poset has a unique decomposition in terms of the Ur-operation. Recall that a poset $P$, $|P| > 1$, is called \textit{prime} if it cannot be expressed as the ordinal sum or disjoint union of two posets. The decomposition of posets into primes by these two operations is known as the series-parallel decomposition. Similarly, a poset $P$, $|P| > 2$, is a \textit{strong prime} if it cannot be expressed as a result of a non-trivial Ur-Operation. Note that a poset is prime if it is a strong prime, but that the converse does not hold. The decomposition of a poset into strong primes by the Ur-Operation gives its \textit{Ur-Decomposition}, a generalization of the series-parallel decomposition.

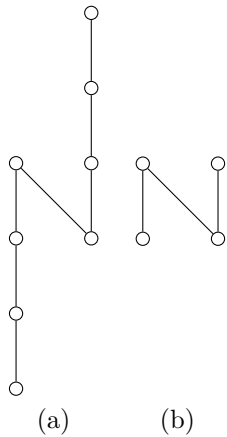
\begin{figure}[htb]
\begin{center}
\begin{tabular}{c c}
\begin{tikzpicture}[scale=0.5]
\node[draw, circle, inner sep=0pt, minimum size=5pt] at (0,0) (a) {};
\node[draw, circle, inner sep=0pt, minimum size=5pt] at (0,2) (b) {};
\node[draw, circle, inner sep=0pt, minimum size=5pt] at (2,2) (c) {};
\node[draw, circle, inner sep=0pt, minimum size=5pt] at (2,0) (d) {};
\node[draw, circle, inner sep=0pt, minimum size=5pt] at (2,4) (e) {};
\node[draw, circle, inner sep=0pt, minimum size=5pt] at (2,6) (f) {};
\node[draw, circle, inner sep=0pt, minimum size=5pt] at (0,-2) (g) {};
\node[draw, circle, inner sep=0pt, minimum size=5pt] at (0,-4) (h) {};
\draw (h) -- (g) -- (a) -- (b) -- (d) -- (c) -- (e) -- (f);
\end{tikzpicture}
&\begin{tikzpicture}[scale=0.5]
\node[draw, circle, inner sep=0pt, minimum size=5pt] at (0,3.85) (a) {};
\node[draw, circle, inner sep=0pt, minimum size=5pt] at (0,5.85) (b) {};
\node[draw, circle, inner sep=0pt, minimum size=5pt] at (2,5.85) (c) {};
\node[draw, circle, inner sep=0pt, minimum size=5pt] at (2,3.85) (d) {};
\node[circle] at (0,0) (e) {};
\draw (a) -- (b) -- (d) -- (c);
\end{tikzpicture}\\
(a)
&(b)
\end{tabular}
\end{center}
\caption{A prime and its corresponding strong prime}
\end{figure}
If $P$ is a poset that has been created by the Ur-Operation, then $P$ will contain subposets that are reducible to a point.
Formally,
\begin{definition}
\label{RAP}
A subset of a poset $S=\{x_k\}_{k=1}^m \subset P$ is reducible to a point (an RAP) when for every $y\in P-S$, either $y\leq\{x_k\}$, $\{x_k\}\leq y$, or $\{x_k\}$ and $y$ are incomparable for all k. An RAP $S$ of $P$ is maximal when it is neither $P$ nor a subset of any other RAP other than $P$.
\end{definition}
Notably, an Ur-operation is an expression of the form $\mathscr{P}[x_k\to P_k]_{k=1}^n$ where each $P_k$ is an RAP.
We will show that if $P$ is a prime then the maximal RAP's partition $P$ and provide a canonical way to decompose $P$ under the Ur-operation.
\begin{lemma}
\label{RAPpartition}
For any prime poset $P$, the maximal RAPs of $P$ partition $P$.
\end{lemma}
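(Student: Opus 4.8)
The plan is to prove the partition in two stages: first that the maximal RAPs cover $P$, and second that they are pairwise disjoint. The engine for both stages is the observation that RAPs are closed under union whenever they meet: if $S$ and $T$ are RAPs with $S \cap T \neq \varnothing$, then $S \cup T$ is an RAP. I would prove this directly from Definition~\ref{RAP}. Fix $y \in P - (S \cup T)$; then $y$ relates to all of $S$ in one uniform way (below all, above all, or incomparable to all) and likewise to all of $T$. Choosing a witness $b \in S \cap T$, the single relation between the fixed pair $y$ and $b$ must coincide with both uniform types, so the two types agree and $y$ therefore relates uniformly to all of $S \cup T$. Coverage is then immediate: every singleton $\{x\}$ is an RAP by trichotomy of the relation between two elements, and since $P$ is prime we have $\{x\} \neq P$, so taking an inclusion-maximal RAP among those that contain $x$ and are not equal to $P$ yields a maximal RAP containing $x$.

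For disjointness, I would suppose $S$ and $T$ are distinct maximal RAPs with $S \cap T \neq \varnothing$. By the union observation $S \cup T$ is an RAP, and it properly contains $S$ (otherwise $T \subseteq S$ would force $T = S$ by maximality), so maximality of $S$ gives $S \cup T = P$. Writing $A = S - T$, $B = S \cap T$, and $C = T - S$, all three blocks are nonempty, and because $S$ and $T$ are RAPs each $a \in A$ relates uniformly to $B \cup C = T$ while each $c \in C$ relates uniformly to $A \cup B = S$. The crux is to turn this configuration into a decomposition forbidden by primality.

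Here is the key step. Pick any $a \in A$ and read off its uniform relation to $T = B \cup C$. If $a$ lies below all of $B \cup C$, then in particular $a < c$ for every $c \in C$; since each such $c$ relates uniformly to $S \ni a$ and is comparable to $a$ from above, $c$ must in fact lie above all of $S$. Thus every element of $C$ dominates every element of $S$, giving $P = S \oplus C$. Symmetrically, if $a$ lies above all of $B \cup C$ one obtains $P = C \oplus S$, and if $a$ is incomparable to all of $B \cup C$ one obtains $P = S + C$. In each case $P$ is a nontrivial ordinal sum or disjoint union, contradicting primality, so intersecting maximal RAPs coincide; with coverage this shows the maximal RAPs partition $P$. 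I expect the main obstacle to be exactly this propagation argument: verifying that one element's uniform relation to a single block forces the opposite block into a single uniform relation to all of $S$, which is precisely what collapses the situation into a series or parallel decomposition.
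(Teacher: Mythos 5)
Your proof is correct and follows essentially the same route as the paper's: singletons are RAPs (giving coverage), intersecting RAPs have an RAP union, maximality forces that union to be all of $P$, and the uniform-relation structure then produces a disjoint-union or ordinal-sum decomposition contradicting primality. The only cosmetic difference is that you conclude with a two-block decomposition $P = S \oplus C$, $C \oplus S$, or $S + C$, whereas the paper writes the same contradiction as a three-block decomposition over $S - T$, $S \cap T$, and $T - S$.
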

\begin{proof}
Recall if $x\in P$ then $\{x\}$ is an RAP of $P$. Then every element of $P$ lies in an RAP, and further in some maximal RAP which contains $\{x\}$.
Now suppose for contradiction that maximal RAPs $A$ and $B$ of $P$ are not disjoint. For distinct $x,y\in P$, let $f(x,y)\in\{``less\ than",``greater\ than",``incomparable"\}$ denote the relation between $x$ and $y$.
If $S$ and $T$ are subsets of $P$ such that $f(s,t)$ is constant over all $s\in S$ and $t\in T$, then we let $f(S,T)$ denote this constant value.
Fix $x\in A\cap B$.
If $y\in P\setminus(A\cup B)$ then $f(y,A) = f(y,x) = f(y,B)$ by the definition of an RAP.
This shows that $A\cup B$ is an RAP.
Then $A\cup B=P$ by the maximality of $A$ and $B$.
For any $x\in A\setminus B$, $y\in A\cap B$, and $z\in B\setminus A$, the definition of an RAP gives that $f(x,y)=f(x,z)=f(y,z)$.
Because this holds for any such $x$, $y$, and $z$, we have that $f(A\setminus B,A\cap B)=f(A\setminus B,B\setminus A)=f(A\cap B,B\setminus A)$ is a well-defined single value.
If this value is $``incomparable"$ then $P=(A\setminus B)+(A\cap B)+(B\setminus A)$.
If this value is $``less\ than"$ then $P=(A\setminus B)\oplus(A\cap B)\oplus(B\setminus A)$.
If this value if $``greater\ than"$ then $P=(B\setminus A)\oplus(A\cap B)\oplus(A\setminus B)$.
In each of these three cases we obtain a contradiction to the fact that $P$ is prime.
\end{proof}
Lemma \ref{RAPpartition} provides a canonical decomposition of a poset $P$ under the Ur-operation.
If $P=P_1+\ldots+P_n$ where $n\geq2$ and where each $P_k$ is indecomposable under disjoint union then write $P=A_n[x_k\to P_k]_{k=1}^n$.
If $P=P_1\oplus\ldots\oplus P_n$ where $n\geq2$ and where each $P_k$ is indecomposable under the ordinal sum then write $P=C_n[x_k\to P_k]_{k=1}^n$.
If $P$ is non-strong prime with maximal RAPs $\{S_k\}_{k=1}^n$ then write $P=Q[x_k\to S_k]$ where $Q$ is the poset of maximal RAPs of $P$.
If $P$ is a strong prime then $P$ is indecomposable under the Ur-operation.
\subsection{Multichains and Poset Reciprocity}
We first present a weak condition on doppelgangers that follows from poset reciprocity.
\begin{lemma}
\label{minusheight}
For all posets $P$, $(-1)^{|P|}F_P(-h(P))=1$ if and only if every element of $P$ is contained in a chain of cardinality $h(P)$.
In particular, doppelganger posets either both satisfy this condition or both fail to satisfy this condition.
\end{lemma}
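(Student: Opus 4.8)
The plan is to reinterpret the signed evaluation $(-1)^{|P|}F_P(-h(P))$ as a count of strict order-preserving maps, and then show this count is governed entirely by whether two natural rank functions on $P$ coincide. First I would apply the Poset Reciprocity Theorem (Theorem~\ref{reciprocity}) with a natural labeling $\omega$: since $\omega$ is natural we have $\Omega_{P,\omega}=F_P$, and the dual labeling $\overline{\omega}$ makes every comparability strict, so $\Omega_{P,\overline{\omega}}(m)$ counts the maps $f\colon P\to[m]$ with $f(x)<f(y)$ whenever $x<y$. Reciprocity then gives $(-1)^{|P|}F_P(-h(P))=\Omega_{P,\overline{\omega}}(h(P))$, so it suffices to prove that the number of strict order-preserving maps $P\to[h]$, where $h:=h(P)$, equals $1$ exactly when every element lies on a chain of cardinality $h$.

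Next I would introduce two rank statistics: $r(x)$, the maximum number of elements in a chain of $P$ with top element $x$, and $s(x)$, the maximum number of elements in a chain with bottom element $x$. Both take values in $[h]$. The map $r$ is itself strictly order-preserving (if $x<y$ then a longest chain up to $x$ extends by $y$, so $r(x)<r(y)$), and so is $g(x):=h+1-s(x)$. The key bounds to establish are that every strict order-preserving map $f\colon P\to[h]$ satisfies $r(x)\le f(x)\le g(x)$ for all $x$; these follow from applying strict monotonicity of $f$ along a longest chain ending at $x$ (forcing $f(x)\ge r(x)$) and along a longest chain beginning at $x$ (forcing $f(x)\le h-s(x)+1=g(x)$).

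The crux is the inequality $r(x)\le g(x)$, with equality precisely when $x$ lies on a maximal chain. Concatenating a longest chain ending at $x$ with a longest chain beginning at $x$ yields a chain of $r(x)+s(x)-1$ elements, whence $r(x)+s(x)\le h+1$, i.e. $r(x)\le g(x)$, and equality $r(x)+s(x)=h+1$ holds iff that concatenation is a full chain of cardinality $h$ through $x$. With this in hand the dichotomy is immediate. If every element lies on a chain of size $h$, then $r\equiv g$, the sandwich $r\le f\le g$ pins down $f=r$ uniquely, and the count is $1$. If some $z$ lies on no chain of size $h$, then $r(z)<g(z)$, so $r$ and $g$ are two distinct strict order-preserving maps and the count is at least $2$. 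This establishes the equivalence.

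Finally, the ``in particular'' statement is a one-line consequence: by Corollary~\ref{rootresult} the height $h(P)$ is recoverable from $F_P$, so $(-1)^{|P|}F_P(-h(P))$ depends only on $F_P$, and doppelgangers therefore agree on it and hence on whether the chain-covering condition holds. The main obstacle I anticipate is not any single computation but getting the bracketing $r\le f\le g$ together with the equality characterization exactly right, since the whole argument collapses to this sandwiching; once the rank functions are correctly identified as the extreme strict labelings, both directions fall out simultaneously rather than requiring a separate ad hoc construction of a second map in the ``only if'' direction.
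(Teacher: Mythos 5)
Your proposal is correct and takes essentially the same route as the paper: apply poset reciprocity to reinterpret $(-1)^{|P|}F_P(-h(P))$ as the number of strict order-preserving maps $P\to[h(P)]$, then exhibit the two extremal rank functions to distinguish the two cases. Your version is in fact slightly more careful than the paper's (the paper's map ``cardinality of the largest chain with minimal element $y$'' is order-reversing as literally written and needs the $h+1-{}$ flip you supply, and your sandwich $r\le f\le g$ makes the uniqueness direction explicit), but the underlying argument is identical.
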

\begin{proof}
By poset reciprocity, $(-1)^{|P|}F_P(-h(P))$ counts the number of strict order-preserving maps $f\colon P\to[h(P)]$.
If every element of $P$ is contained in a chain of cardinality $h(P)$ then the value of $f$ at each element of $P$ is determined and so $(-1)^{|P|}F_P(-h(P))=1$.
For the converse, suppose that some element $x$ of $P$ is contained in a chain of cardinality smaller than $h(P)$.
Define $f(y)$ to be the cardinality of the largest chain in $P$ with minimal element $y$ and define $g(y)$ to be the cardinality of the largest chain in $P$ with maximal element $y$.
Then $f$ and $g$ are strict order-preserving maps $P\to[h(P)]$ that differ at $x$ and so $(-1)^{|P|}F_P(-h(P))\geq2$.
\end{proof}
Lemma \ref{minusheight} may be used to control potential doppelgangers of sums of chains.
\begin{proposition}
\label{weakkummer}
For positive integers $n$ and $k$, nontrivial doppelgangers of $C_n+\ldots+C_n=C_n\times A_k$ have fewer connected components.
\end{proposition}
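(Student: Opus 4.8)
The plan is to combine the chain-covering criterion of Lemma~\ref{minusheight} with the fact that the connected components of a poset are exactly the summands of its disjoint-union decomposition, and then finish with an elementary size count. First I would record the relevant parameters of $P=C_n+\cdots+C_n$ ($k$ copies): it has $|P|=nk$, height $h(P)=n$, exactly $k$ connected components, and every element of $P$ lies in a chain of cardinality $h(P)=n$, namely the chain forming its own component. Consequently $(-1)^{|P|}F_P(-h(P))=1$ by Lemma~\ref{minusheight}.

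Next I would transfer this structure to an arbitrary doppelganger $Q\sim P$. By Lemma~\ref{invariants} we have $|Q|=nk$ and $h(Q)=n$, and since $(-1)^{|Q|}F_Q(-h(Q))=(-1)^{|P|}F_P(-h(P))=1$, Lemma~\ref{minusheight} forces every element of $Q$ to lie in a chain of cardinality $n$. Writing $Q=Q_1+\cdots+Q_c$ as the disjoint union of its $c$ connected components, I would note that every chain lies entirely within one component, so the identity $h(Q)=\max_i h(Q_i)=n$ together with the chain-covering condition yields $h(Q_i)=n$ for each $i$, and in particular $|Q_i|\ge n$.

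Summing over components then gives $nk=|Q|=\sum_{i=1}^{c}|Q_i|\ge nc$, hence $c\le k$: no doppelganger of $P$ can have more than $k$ connected components. It remains to analyze equality. If $c=k$, every inequality above is tight, so $|Q_i|=n=h(Q_i)$ for each $i$; a connected poset on $n$ elements whose longest chain already has $n$ elements must be a total order, forcing $Q_i\cong C_n$ and therefore $Q\cong C_n+\cdots+C_n=P$. Thus any \emph{nontrivial} doppelganger satisfies $c<k$, proving the claim.

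The argument is essentially a counting bound, so I do not expect a serious obstacle once Lemma~\ref{minusheight} is invoked; the one step requiring care is the equality analysis, where I must justify that a connected, height-$n$ poset on exactly $n$ elements is \emph{forced} to be the chain $C_n$, rather than merely containing a chain of that length. Everything else reduces to the additivity of $|\cdot|$ and the maximum behavior of height over disjoint unions already recorded in Lemma~\ref{invariants}.
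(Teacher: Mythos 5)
Your proposal is correct and follows essentially the same route as the paper: invoke Lemma~\ref{minusheight} to force every element of a doppelganger $Q$ into a chain of cardinality $n$, then count $nk=|Q|=\sum_i|Q_i|\geq\sum_i h(Q_i)\geq nc$ over the components. Your write-up is in fact slightly more careful than the paper's, since you first establish $c\leq k$ and then treat the equality case separately, whereas the paper jumps directly to the case of $k$ components; the substance is identical.
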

\begin{proof}
Let $P=C_n\times A_k$.
Now $h(P)=n$ and every element of $P$ is contained in a chain of cardinality $n$.
If $Q$ is a doppelganger of $P$ then lemma \ref{minusheight} gives that every element of $Q$ is contained in a chain of cardinality $n$.
In particular, every connected component of $Q$ has height at least $n$.
If $Q=Q_1+\ldots+Q_k$ then $kn=|P|=|Q|=|Q_1|+\ldots+|Q_n|\geq h(Q_1)+\ldots+h(Q_n)\geq n+\ldots+n=kn$.
Consequently, these inequalities must be equalities so $|Q_j|=h(Q_j)=n$ for all $1\leq j\leq k$.
This shows that $Q\cong P$.
\end{proof}
In fact, this result may be generalized.
It can be shown that for positive integers $n_1\bigm|\ldots\bigm|n_k$ ($n_i$ divides $n_{i+1}$ for all $1\leq i\leq k-1$), that nontrivial doppelgangers of $P=C_{n_1}+\ldots+C_{n_k}$ have fewer connected components.
If $Q$ is a minimal counterexample to this result then it can be shown by considering the roots of the order polynomials of the connected components of $Q$ that the smallest connected component of $Q$ is a chain of height $1\leq h\leq n_1-1$.
Then $F_P(x)/F_{C_h}(x)$ is an integer-valued polynomial and Kummer's theorem from elementary number theory gives a contradiction.
It would be nice to find a poset-theoretic proof of this result along the lines of the proof of Proposition \ref{weakkummer}.
\section{Acknowledgements}
We would like to thank Professor Hamaker, who presented the problem to us, provided direction for our research, and whose suggestions were invaluable throughout the entire process.
We would also like to thank Professor Morrow, head of the University of Washington Math REU, for making this research a reality for us.
In addition, we would like to thank Sean Griffin for his insightful comments and edits along the way, and Professor Stanley for his guidance later in the process. 
Lastly, we would like to thank Professor Billey, who initially proposed the problem.

\end{document}